\documentclass[eqthmnum]{jt-calcs}
\usepackage{amsmath}
\usepackage{mathrsfs}
\usepackage{tikz-cd}
\usepackage{varwidth}

\usepackage[backend=bibtex,style=alphabetic,sorting=nyt]{biblatex}
\bibliography{bibliography}

\newcommand{\bl}[1]{\ensuremath{\mathscr{#1}}}

\DeclareMathOperator{\sss}{ss}
\DeclareMathOperator{\ind}{ind}
\DeclareMathOperator{\IC}{IC}
\DeclareMathOperator{\std}{std}
\DeclareMathOperator{\Cl}{Cl}
\DeclareMathOperator{\Fix}{Fix}

\crefname{enumi}{}{}
\crefformat{enumi}{#2#1#3}

\newcommand{\Ql}{\ensuremath{\overline{\mathbb{Q}}_{\ell}}}

\newcommand{\Centu}[1]{\Cent_{\mathrm{U}}(#1)}
\renewcommand{\epsilon}{\varepsilon}

\definecolor{asmpgray}{gray}{0.85}
\newsavebox{\asmpbox}
\newenvironment{assumption}{
\begin{lrbox}{\asmpbox}
\begin{varwidth}{15cm}
\ignorespaces
}
{
\end{varwidth}
\end{lrbox}
\begin{center}
\setlength{\fboxsep}{5pt}
\fcolorbox{black}{asmpgray}{\usebox{\asmpbox}}
\end{center}
\ignorespacesafterend
}

\title{Evaluating Characteristic Functions of Character Sheaves at Unipotent Elements}
\author{Jay Taylor}
\address{FB Mathematik, TU Kaiserslautern, Postfach 3049, 67653 Kaiserslautern, Germany}
\email{taylor@mathematik.uni-kl.de}
\mscno{2010}{20C33}{20G40}
\keywords{Finite reductive groups, character sheaves, unipotent elements}

\emergencystretch=1em

\begin{document}
\begin{abstract}
Assume $\bG$ is a connected reductive algebraic group defined over an algebraic closure $\mathbb{K} = \overline{\mathbb{F}}_p$ of the finite field of prime order $p>0$. Furthermore, assume that $F : \bG \to \bG$ is a Frobenius endomorphism of $\bG$. In this article we give a formula for the value of any $F$-stable character sheaf of $\bG$ at a unipotent element. This formula is expressed in terms of class functions of $\bG^F$ which are supported on a single unipotent class of $\bG$. In general these functions are not determined, however we give an expression for these functions under the assumption that $Z(\bG)$ is connected, $\bG/Z(\bG)$ is simple and $p$ is a good prime for $\bG$. In this case our formula is completely explicit.
\end{abstract}

\section{Introduction}
\begin{pa}
Throughout this article $\bG$ will denote a connected reductive algebraic group defined over an algebraic closure $\mathbb{K} = \overline{\mathbb{F}}_p$ of the finite field of prime order $p>0$. Furthermore $F : \bG \to \bG$ will denote a Frobenius endomorphism defining an $\mathbb{F}_q$-rational structure $G := \bG^F$. If $H$ is a finite group then we will denote by $\Cent(H)$ the space of all class functions $f : H \to \Ql$ where $\ell \neq p$ is a prime and $\Ql$ is an algebraic closure of the field of $\ell$-adic numbers.
\end{pa}

\begin{pa}
In \cite{lusztig:1986:character-sheaves-V} Lusztig has defined a set $\widehat{\bG}$ of $\bG$-equivariant $\Ql$-perverse sheaves on $\bG$ known as character sheaves. These geometric objects have a conjectural relationship to the irreducible characters of $G$. Through this relationship one sees that the character theory of $G$ is intimately related to the geometry of $\bG$. Let $A \in \widehat{\bG}$ be an $F$-stable character sheaf, i.e.\ there exists an isomorphism $\phi_A : F^*A \to A$, then Lusztig has associated to $A$ and $\phi_A$ a class function $\chi_{A,\phi_A} \in \Cent(G)$ whose definition depends heavily upon the choice of the isomorphism $\phi_A$. In this article we consider the following problem.
\end{pa}

\begin{prob}\label{prob:evaluate-chi-A-g}
Given $g \in G$ can we explicitly determine $\chi_{A,\phi_A}(g)$?
\end{prob}

\begin{pa}
For every character sheaf $A \in \widehat{\bG}$ we assume fixed an appropriately chosen isomorphism $\phi_A : F^*A \to A$, when one exists, such that the resulting set of functions $\mathcal{B} = \{\chi_{A,\phi_A}\}$ is an orthonormal basis of $\Cent(G)$. The problem considered here is motivated by Lusztig's conjecture, mentioned above, which is stated in \cite{lusztig:1984:characters-of-reductive-groups}. Specifically, Lusztig conjectures an explicit description for the change of basis matrix between $\mathcal{B}$ and $\Irr(G)$ (the set of all irreducible characters of $G$). Thus, having a solution to \cref{prob:evaluate-chi-A-g} together with a solution to Lusztig's conjecture would provide us with a method for determining the values of the irreducible characters of $G$. In fact, to prove Lusztig's conjecture one already requires a detailed understanding of the values of the functions in $\mathcal{B}$ (see for instance \cite{lusztig:1986:on-the-character-values,shoji:1997:unipotent-characters-of-finite-classical-groups}).
\end{pa}

\begin{pa}
In this article we will consider \cref{prob:evaluate-chi-A-g} when the element $g \in G$ is unipotent. Let us denote by $\mathcal{N}_{\bG}$ the set of all pairs $(\mathcal{O},\mathscr{E})$ where $\mathcal{O}$ is a unipotent conjugacy class of $\bG$ and $\mathscr{E}$ is a $\bG$-equivariant local system on $\mathcal{O}$. This set is partitioned into subsets $\mathscr{I}[\bL,\nu]$ called \emph{blocks} where $\bL \leqslant \bG$ is a Levi complement of a parabolic subgroup of $\bG$ and $\nu \in \mathcal{N}_{\bL}$ is a cuspidal pair. The map $(\mathcal{O},\mathscr{E}) \mapsto (F^{-1}(\mathcal{O}),F^*\mathscr{E})$ defines a natural action of $F$ on the set $\mathcal{N}_{\bG}$, compatible with the blocks, and we denote by $\mathcal{N}_{\bG}^F$ the elements fixed under this action. If $\iota \in \mathscr{I}[\bL,\nu] \cap \mathcal{N}_{\bG}^F$ then we may, and will, assume that $F(\bL) = \bL$ and $\nu \in \mathcal{N}_{\bL}^F$.

For any $\iota \in \mathscr{I}[\bL,\nu]^F$ we have a corresponding irreducible character $E_{\iota} \in \Irr(W_{\bG}(\bL))$ where $W_{\bG}(\bL) = N_{\bG}(\bL)/\bL$ is the relative Weyl group of $\bL$. Note that $F$ induces an automorphism of $W_{\bG}(\bL)$, hence an action on the set $\Irr(W_{\bG}(\bL))$. This correspondence gives a bijection $\mathscr{I}[\bL,\nu]^F \to \Irr(W_{\bG}(\bL))^F$. Furthermore, if $\iota = (\mathcal{O},\mathscr{E})$ then we have a corresponding class function $Y_{\iota} \in \Cent(G)$ which is supported only on $\mathcal{O}^F$.

Assume now that $A \in \widehat{\bG}$ is an $F$-stable character sheaf whose support contains unipotent elements (c.f.\ \cref{pa:conventions}). Then there exists an $F$-stable Levi subgroup $\bL$, a cuspidal pair $\nu = (\mathcal{O}_0,\mathscr{E}_0) \in \mathcal{N}_{\bL}^F$ and a tame $F$-stable local system $\mathscr{L}$ on $Z^{\circ}(\bL)$ such that $A$ is a summand of the induced complex $\ind_{\bL}^{\bG}(A_{\mathscr{L}})$ where
\begin{equation*}
A_{\mathscr{L}} = \IC(\overline{\mathcal{O}_0}Z^{\circ}(\bL),\mathscr{E}_0\boxtimes\mathscr{L})[\dim\mathcal{O}_0 + \dim Z^{\circ}(\bL)] \in \widehat{\bL}^F.
\end{equation*}
The summands of the induced complex $\ind_{\bL}^{\bG}(A_{\mathscr{L}})$ are in bijective correspondence with the set of irreducible characters $\Irr(W_{\bG}(\bL,\mathscr{L}))$ where $W_{\bG}(\bL,\mathscr{L}) \leqslant W_{\bG}(\bL)$ is the stabiliser of $\mathscr{L}$. If $A \in \widehat{\bG}$ corresponds to $E \in \Irr(W_{\bG}(\bL,\mathscr{L}))$ then we have $A$ is $F$-stable if and only if $E$ is fixed by $F$. With this we may state our main result (which is stated precisely in \cref{thm:A}).
\end{pa}

\begin{thm*}
For any unipotent element $u \in G$ we have
\begin{equation*}
\chi_{A,\phi_A}(u) = \sum_{\iota',\iota \in \mathscr{I}[\bL,\nu]^F}\langle \widetilde{E}_{\iota},\Ind_{W_{\bG}(\bL,\mathscr{L}).F}^{W_{\bG}(\bL).F}(\widetilde{E})\rangle_{W_{\bG}(\bL).F}(-1)^{a_{\iota}}q^{(\dim\bG+a_{\iota})/2}P_{\iota',\iota}Y_{\iota'}(u).
\end{equation*}
Here $a_{\iota} \in \mathbb{Z}$ is an integer associated to $\iota$ (c.f.\ \cref{pa:a-and-b-values}) and $\widetilde{E}_{\iota}$ is the restriction to the coset $W_{\bG}(\bL).F$ of an extension of $E_{\iota}$ to $W_{\bG}(\bL) \rtimes \langle F\rangle$ (similarly for $\widetilde{E}$). These extensions are determined by (and determine) the choice of isomorphisms defining the functions $\chi_{A,\phi_A}$ and $Y_{\iota'}$. Furthermore $(P_{\iota',\iota})$ is a block of a matrix which is computable by a general algorithm (see \cite[Theorem 24.4]{lusztig:1986:character-sheaves-V}).
\end{thm*}

\begin{pa}
If the support of the character sheaf $A$ does not contain unipotent elements then $\chi_{A,\phi_A}(u) = 0$ for all unipotent elements $u \in G$, hence our result covers all $F$-stable character sheaves. The functions $Y_{\iota'}$ are not quite the functions considered in \cite[\S24]{lusztig:1986:character-sheaves-V}. Here we must scale the functions in \cite{lusztig:1986:character-sheaves-V} by a factor coming from a certain linear character introduced by Bonnaf\'e in \cite{bonnafe:2004:actions-of-rel-Weyl-grps-I}. This linear character is known in almost all cases, in particular in the case when $Z(\bG)$ is connected and $p$ is a good prime for $\bG$ (see \cref{pa:connected-centre}).
\end{pa}

\begin{pa}
All terms in the above formula are explicitly computable, except the functions $Y_{\iota'}$. In \cref{thm:Y-functions} we give an explicit formula for these functions whenever $Z(\bG)$ is connected, $\bG/Z(\bG)$ is simple and $p$ is a good prime. Thus under these assumptions our formula is completely explicit and computable. Note that this does not exhaust all cases where these functions are known to be computable. For instance, using Shoji's results in \cite{shoji:2007:generalized-green-functions-II} one could obtain a similar statement when $\bG$ is a special orthogonal or symplectic group (if $p\neq 2$ then in this situation Bonnaf\'e's linear character is also known by work of Waldspurger - see \cite[Example 6.3]{bonnafe:2004:actions-of-rel-Weyl-grps-I} and \cite[Errata]{bonnafe:2005:actions-of-rel-Weyl-grps-II}). In the appendix we include results concerning Lusztig--MacDonald--Spaltenstein induction in finite groups extended by an automorphism, which cyclically permutes isomorphic factors. We hope this to be useful in determining the coset multiplicities occurring in the theorem.
\end{pa}

\begin{pa}
Our work here is inspired by (and is a generalisation of) that of Lusztig in \cite{lusztig:1986:on-the-character-values}. In particular, if $F$ is a split Frobenius endomorphism and $\bL$ is contained in an $F$-stable parabolic subgroup of $\bG$ then this result is due to Lusztig (see \cite[2.6(e), 3.2(a), 4.9(a)]{lusztig:1986:on-the-character-values}). It is our hope to give, in the future, an explicit solution to Lusztig's conjecture relating the characteristic functions of character sheaves to irreducible characters in the case that $Z(\bG)$ is connected, $\bG/Z(\bG)$ is simple and $p$ is a good prime for $\bG$. In particular, we would like to obtain a result similar to that of \cite[Theorem 7.2]{lusztig:1986:on-the-character-values} without restriction on the characteristic. Combining such a result with \cref{thm:A,thm:Y-functions} would then give a wholly explicit way to determine the values of irreducible characters at unipotent elements for such groups. This article may be seen as a first step towards that goal.
\end{pa}

\begin{rem}
Much of this work originally appeared in the preprint \cite{taylor:2013:multiplicities-in-GGGRs} (for instance see \cite[8.6]{taylor:2013:multiplicities-in-GGGRs}). In \cite{taylor:2013:multiplicities-in-GGGRs} it is assumed throughout that $Z(\bG)$ is connected, $\bG/Z(\bG)$ is simple and $p$ is a good prime but it became apparent to the author that most of the arguments go through unchanged in the general case. Shortly after this Digne--Lehrer--Michel released a preprint \cite{digne-lehrer-michel:2013:on-character-sheaves-and-characters} concerning Kawanaka's generalised Gelfand--Graev representations and values of irreducible characters at unipotent elements. Upon releasing this current article it was communicated to the author by Fran\c{c}ois Digne that Digne--Lehrer--Michel had also independently obtained \cref{thm:A} (see \cite[Theorem 4.1(ii)]{digne-lehrer-michel:2013:on-character-sheaves-and-characters}).

Although we achieve the same result, the methods used are quite different. Our approach is as follows. The functions on the left and right hand sides of \cref{thm:A} are the characteristic functions of certain underlying complexes. In \cite{lusztig:1986:on-the-character-values} Lusztig constructs an isomorphism between these complexes. We then deduce \cref{thm:A} by determining explicitly what happens to the Frobenius endomorphism under this isomorphism. The approach of Digne--Lehrer--Michel involves working exclusively at the level of characteristic functions. In particular, they obtain the result by rephrasing the problem in terms of generalised Green functions using Lusztig's character formula \cite[Theorem 8.5]{lusztig:1985:character-sheaves} and the language of \cite{digne-lehrer-michel:2003:space-of-unipotently-supported}.

We end by noting that to obtain an explicit formula for the values of characteristic functions of character sheaves at unipotent elements one needs a statement like \cref{thm:Y-functions}. To obtain such a statement one must be particularly careful about how one chooses the isomorphism $\phi_A$ of an $F$-stable character sheaf $A$. The work of Bonnaf\'e \cite{bonnafe:2004:actions-of-rel-Weyl-grps-I} is also crucial in dealing with the case where the character sheaf is induced from a Levi subgroup which is not the complement of an $F$-stable parabolic subgroup.
\end{rem}

\begin{acknowledgments}
The author gratefully acknowledges financial support from ERC Advanced Grant 291512. He would like to thank Geordie Williamson and Sebastian Herpel for useful conversations and Gunter Malle for his notes on an early version of this paper. Finally, we thank the referee for their useful remarks.
\end{acknowledgments}

\section{Conventions}\label{sec:conventions}
\subsection{Perverse Sheaves}
\begin{pa}\label{pa:conventions}
Given a variety $\bX$ over $\mathbb{K}$ we denote by $\mathscr{D}\bX = \mathscr{D}_c^b(\bX,\overline{\mathbb{Q}}_{\ell})$ the bounded derived category of $\overline{\mathbb{Q}}_{\ell}$-constructible sheaves on $\bX$. Furthermore we denote by $\mathscr{M}\bX$ the full subcategory of $\mathscr{D}\bX$ whose objects are the perverse sheaves on $\bX$. Assume $\bH$ is a connected algebraic group acting on $\bX$ then we take the statement $A \in \mathscr{D}\bX$ is $\bH$-equivariant to be defined as in \cite[\S1.9]{lusztig:1985:character-sheaves}. If $\bX$ is itself a connected algebraic group then, unless otherwise explicitly stated, we take $\bX$-equivariance to be with respect to the natural conjugation action of $\bX$ on itself.

We will refer to an element $A \in \mathscr{D}\bX$ as a ``complex''. Recall that we may construct for any $i \in \mathbb{Z}$ the $i$th cohomology sheaf $\mathscr{H}^iA$ of the complex $A$, which is a $\Ql$-sheaf. Given any element $x \in \bX$ we then denote by $\mathscr{H}^i_xA$ the corresponding stalk of $\mathscr{H}^iA$. For any $A \in \mathscr{D}\bX$ we call $\supp(A) := \{x \in \bX \mid \mathscr{H}_x^iA \neq 0$ for some $i \in \mathbb{Z}\}$ the support of $A$. If $\varphi : \bX \to \bY$ is a morphism then we denote by $\varphi^* : \mathscr{D}\bY \to \mathscr{D}\bX$ the inverse image functor, $\varphi_* : \mathscr{D}\bX \to \mathscr{D}\bY$ the right derived direct image functor and $\varphi_! : \mathscr{D}\bX \to \mathscr{D}\bY$ the right derived direct image functor with compact support. If $\varphi$ is smooth with connected fibres of dimension $d$ then we denote by $\tilde{\varphi}$ the shifted inverse image $\varphi^*[d]$ (c.f.\ \cite[\S1.7]{lusztig:1985:character-sheaves}).
\end{pa}

\begin{pa}
Assume $\bX \subseteq \bY$ is a subvariety then for any $A \in \mathscr{D}\bY$ we denote by $A|_{\bX}$ the complex $i^*A \in \mathscr{D}\bX$ where $i : \bX \hookrightarrow \bY$ is the inclusion map (we call this the restriction of $A$ to $\bX$). Assume now that $\bX$ is a smooth open dense subset of its closure $\overline{\bX}$ and that $\mathscr{L}$ is a local system on $\bX$ (by which we mean a locally constant $\overline{\mathbb{Q}}_{\ell}$-constructible sheaf with finite dimensional stalks) then $\mathscr{L}[\dim\bX] \in \mathscr{M}\bX$ is a perverse sheaf on $\bX$. We denote by $\IC(\overline{\bX},\mathscr{L})[\dim\bX] \in \mathscr{M}\overline{\bX}$ the intersection cohomology complex determined by $\mathscr{L}$ which is an element of $\mathscr{M}\overline{\bX}$ extending $\mathscr{L}[\dim\bX]$, i.e.\ we have $\IC(\overline{\bX},\mathscr{L})|_{\bX} \cong \mathscr{L}$. We may freely consider this as an element of $\mathscr{M}\bY$ by extending $\IC(\overline{\bX},\mathscr{L})[\dim\bX]$ to $\bY$ by 0 on $\bY - \overline{\bX}$ and we will do so without explicit mention.
\end{pa}

\begin{pa}\label{pa:direct-image-commutative}
For convenience we recall here the following base change isomorphism (see \cite[(1.7.5)]{lusztig:1985:character-sheaves-I}). Assume $\bX$, $\bY$, $\bZ$ and $\bW$ are varieties over $\mathbb{K}$ and that we have a commutative diagram of morphisms
\begin{equation*}
\begin{tikzcd}
\bX \arrow{d}{f}\arrow{r}{\phi} & \bY \arrow{d}{g}\\
\bZ \arrow{r}{\psi} & \bW
\end{tikzcd}
\end{equation*}
such that $\phi$ and $\psi$ are smooth with connected fibres of common dimension. Then we have an isomorphism $f_!\circ\tilde{\phi} = \tilde{\psi}\circ g_!$ of functors $\mathscr{D}\bY \to \mathscr{D}\bZ$. In particular if every fibre of $\phi$ and $\psi$ is simply a point then we have $f_!\circ\phi^* = \psi^*\circ g_!$.
\end{pa}

\subsection{Finite Groups}
\begin{pa}\label{pa:conventions-finite-groups}
Assume $\mathcal{A}$ is a $\Ql$-algebra then the statement ``$M$ is an $\mathcal{A}$-module'' will mean that $M$ is a finite dimensional left $\mathcal{A}$-module. For any two $\mathcal{A}$-modules $M$ and $M'$ we will denote by $\Hom_{\mathcal{A}}(M,M')$ the space of all $\mathcal{A}$-module homomorphisms $f : M \to M'$. We will denote by $\Irr(\mathcal{A})$ a set of representatives for the isomorphism classes of simple $\mathcal{A}$-modules. If $G$ is a finite group and $\mathcal{A}$ is the group algebra $\Ql G$ then $\mathcal{A}$-modules will be assumed to be either left or right modules, as appropriate, and we will write $\Hom_G(M,M')$ for $\Hom_{\mathcal{A}}(M,M')$. Similarly we will write $\Irr(G)$ for $\Irr(\mathcal{A})$ which we will also identify with the set of irreducible characters determined by the simple modules.
\end{pa}

\begin{pa}\label{pa:semidirect-products}
Assume now that $\phi : G \to G$ is an automorphism and let us denote by $\widetilde{G}$ the semidirect product $G \rtimes \langle\phi\rangle$ where $\langle\phi\rangle \leqslant \Aut(G)$ is the cyclic subgroup generated by the automorphism $\phi$. Let $H \leqslant G$ be a subgroup and let $g \in G$ be such that $\phi(gHg^{-1}) = H$ then $H$ is a normal subgroup of $\widetilde{H} = H\langle(\phi(g),\phi)\rangle \leqslant \widetilde{G}$ and we denote by $H.\phi g$ the set $\{(h\phi(g),\phi) \in \widetilde{G} \mid h \in H\}$ (note that we identify $H$ with its image $H\times \{1\}$ in $\widetilde{G}$). Furthermore we denote by $\Cent(H.\phi g)$ the $\Ql$-vector space of functions $f : H.\phi g \to \Ql$ which are invariant under conjugation by $H$. We can define on $\Cent(H.\phi g)$ an inner product $\langle -,- \rangle_{H.\phi g} : \Cent(H.\phi g) \times \Cent(H.\phi g) \to \Ql$ by setting
\begin{equation*}
\langle f, f' \rangle_{H.\phi g} = \frac{1}{|H|}\sum_{h \in H}f(h\phi(g),\phi)\overline{f'(h\phi(g),\phi)},
\end{equation*}
where $\overline{\phantom{x}} : \Ql \to \Ql$ is a fixed automorphism such that $\overline{\omega} = \omega^{-1}$ for every root of unity $\omega \in \Ql^{\times}$. We may also define a $\Ql$-linear map $\Ind_{H.\phi g}^{G.\phi} : \Cent(H.\phi g) \to \Cent(G.\phi)$ by setting
\begin{equation*}
(\Ind_{H.\phi g}^{G.\phi}f)(x,\phi) = \frac{1}{|H|}\sum_{\substack{y \in G\\ (y^{-1}x\phi(y),\phi) \in H.\phi g}} f(y^{-1}x\phi(y),\phi).
\end{equation*}
\end{pa}

\begin{pa}\label{pa:conventions-coset-identification}
For any $x \in G$ we will denote by $\ad x : G \to G$ the automorphism given by $\ad x(h) = xhx^{-1}$ for all $h \in G$. As the composition $\phi\ad g$ is an automorphism of $G$ we may form, as before, the semidirect product $G \rtimes\langle\phi\ad g\rangle$. The restriction of $\phi\ad g$ to $H$ is also an automorphism of $H$ so we have $H \rtimes \langle \phi\ad g \rangle$ is naturally a subgroup of $G \rtimes \langle \phi\ad g \rangle$. We may now define a surjective homomorphism of groups $\psi_g : G \rtimes \langle \phi\ad g\rangle \to \widetilde{G}$ given by
\begin{equation*}
\psi_g(h,\phi^i\ad \phi^{1-i}(g)\cdots\phi^{-1}(g)g) = (h\phi(g)\phi^2(g)\cdots\phi^i(g),\phi^i)
\end{equation*}
for any $i \in \mathbb{N}$. The restriction of $\psi_g$ defines a bijection $G.\phi\ad g \to G.\phi$ (resp.\ $H.\phi\ad g \to H.\phi g$) which respects the action of $G$ (resp.\ $H$) by conjugation. In particular $\psi_g$ induces isometries $\Cent(G.\phi) \to \Cent(G.\phi\ad g)$ and $\Cent(H.\phi g) \to \Cent(H.\phi\ad g)$ and carries the induction map $\Ind_{H.\phi g}^{G.\phi}$ to $\Ind_{H.\phi\ad g}^{G.\phi\ad g}$.
\end{pa}


\section{Character Sheaves}\label{sec:char-sheaves}
\begin{assumption}
Recall our assumption that $\bG$ is any connected reductive algebraic group defined over an algebraic closure $\mathbb{K} = \overline{\mathbb{F}_p}$ of the finite field of prime characteristic $p>0$. Also $F : \bG \to \bG$ is a Frobenius endomorphism of $\bG$.
\end{assumption}

\begin{pa}\label{pa:grothendieck-group}
We assume fixed an $F$-stable Borel subgroup $\bB_0 \leqslant \bG$ and maximal torus $\bT_0 \leqslant \bB_0$ (the assumption of $F$-stability will not be needed until \cref{sec:rational-structures}). Let $\Phi_{\bG}$ be the roots of $\bG$ with respect to $\bT_0$ then we will denote by $\Delta_{\bG} \subseteq \Phi_{\bG}^+ \subseteq \Phi_{\bG}$ the set of simple and positive roots determined by $\bT_0 \leqslant \bB_0$. We will denote by $(W_{\bG},\mathbb{S})$ the Coxeter system of $\bG$ where $W_{\bG} = W_{\bG}(\bT_0) = N_{\bG}(\bT_0)/\bT_0$ is the Weyl group with respect to $\bT_0$ and $\mathbb{S} = \{s_{\alpha} \mid \alpha \in \Delta_{\bG}\}$ is the set of reflections determined by $\Delta_{\bG}$.

If $\bT$ is a torus then we denote by $\mathcal{S}(\bT)$ the set of (isomorphism classes) of rank 1 local systems $\mathscr{L}$ on $\bT$ such that $\mathscr{L}^{\otimes m}$ is isomorphic to $\overline{\mathbb{Q}}_{\ell}^{\times}$ for some $m$ coprime to $p$ (we call such a local system \emph{tame}). The Weyl group $W_{\bG}$ acts naturally on $\bT_0$ and this in turn gives us an action on $\mathcal{S}(\bT_0)$ by $\mathscr{L} \mapsto (w^{-1})^*\mathscr{L}$; we denote the corresponding set of orbits by $\mathcal{S}(\bT_0)/W_{\bG}$. Associated to each such local system $\mathscr{L}$ Lusztig has defined a set of character sheaves $\widehat{\bG}_{\mathscr{L}}$ (see \cite[Definition 2.10]{lusztig:1985:character-sheaves}), which depends only on the $W_{\bG}$-orbit of $\mathscr{L}$. The set of character sheaves on $\bG$ is then defined to be
\begin{equation*}
\widehat{\bG} = \bigsqcup_{\mathscr{L} \in \mathcal{S}(\bT_0)/W_{\bG}} \widehat{\bG}_{\mathscr{L}},
\end{equation*}
whose elements are irreducible $\bG$-equivariant objects in $\mathscr{M}\bG$. We will denote by $\mathscr{K}_0(\bG)$ the subgroup of the Grothendieck group of $\mathscr{M}\bG$ spanned by the character sheaves of $\bG$. We then define a bilinear form $(-:-) : (\mathscr{K}_0(\bG) \otimes \Ql) \times (\mathscr{K}_0(\bG) \otimes \Ql) \to \Ql$ by setting
\begin{equation*}
(A:A') = \begin{cases}
1 &\text{if }A \cong A',\\
0 &\text{otherwise}.
\end{cases}
\end{equation*}
for all $A$, $A' \in \widehat{\bG}$.
\end{pa}

\begin{pa}\label{pa:definition-of-induction}
We will denote by $\mathcal{Z}$ the set of all pairs $(\bL,\bP)$ such that $\bP$ is a parabolic subgroup of $\bG$ and $\bL \leqslant \bP$ is a Levi complement of $\bP$. We then define $\mathcal{Z}_{\std}$ to be the subset consisting of all standard pairs $(\bL,\bP)$, i.e.\ $\bP$ contains $\bB_0$ and $\bL$ is the unique Levi complement of $\bP$ containing $\bT_0$ (note that every pair in $\mathcal{Z}$ is conjugate to a pair in $\mathcal{Z}_{\std}$). By projecting onto the first factor of $\mathcal{Z}$ (resp.\ $\mathcal{Z}_{\std}$) we obtain the set of Levi (resp.\ standard Levi) subgroups $\mathcal{L}$ (resp.\ $\mathcal{L}_{\std}$).

Assume now that $(\bL,\bP) \in \mathcal{Z}$ and let $A_0 \in \mathscr{M}\bL$ be an $\bL$-equivariant perverse sheaf on $\bL$. In \cite[\S4.1]{lusztig:1985:character-sheaves} Lusztig has associated to $A_0$ a complex $\ind_{\bL\subseteq\bP}^{\bG}(A_0) \in\mathscr{D}\bG$, which we call the induced complex. We recall the construction of this complex following \cite[\S4.1]{lusztig:1985:character-sheaves}. Consider the following diagram
\begin{equation}
\begin{tikzcd}
\bL & \hat{X} \arrow{l}[swap]{\pi}\arrow{r}{\sigma} & \tilde{X} \arrow{r}{\tau} & \bG
\end{tikzcd}
\end{equation}
where we have
\begin{gather*}
\begin{aligned}
\hat{X} &= \{(g,h) \in \bG \times \bG \mid h^{-1}gh \in \bP\} \quad&\quad \tilde{X} &= \{(g,h\bP) \in \bG \times (\bG/\bP) \mid h^{-1}gh \in \bP\}
\end{aligned}\\
\begin{aligned}
\pi(g,h) &= \hat{\pi}_{\bP}(h^{-1}gh) \quad&\quad \sigma(g,h) &= (g,h\bP) \quad&\quad \tau(g,h\bP) &=g
\end{aligned}
\end{gather*}
where $\hat{\pi}_{\bP} : \bP \to \bL$ is the canonical projection map. Since $A_0$ is $\bL$-equivariant there exists a canonical perverse sheaf $D$ on $\tilde{X}$ such that $\tilde{\pi}A_0 = \tilde{\sigma}D$ (c.f.\ \cref{pa:conventions}). We then define $\ind_{\bL\subseteq\bP}^{\bG}(A_0) = \tau_!D$.
\end{pa}

\begin{pa}\label{pa:induction-char-sheaves}
We say a character sheaf $A \in \widehat{\bG}$ is non-cuspidal if there exists a pair $(\bL,\bP) \in \mathcal{Z}$ (with $\bP \neq \bG$) and a character sheaf $A_0 \in \widehat{\bL}$ such that $A$ is a direct summand of the induced complex $\ind_{\bL\subseteq\bP}^{\bG}(A_0)$; otherwise we say $A$ is cuspidal. Lusztig has shown that if $A_0 \in \widehat{\bL}$ then $\ind_{\bL\subseteq\bP}^{\bG}(A_0)$ is semisimple and contained in $\mathscr{M}\bG$ (see \cite[Proposition 4.8(b)]{lusztig:1985:character-sheaves}). Furthermore for any $A \in \widehat{\bG}$ there exists a pair $(\bL,\bP) \in \mathcal{Z}$ and a cuspidal character sheaf $A_0 \in \widehat{\bL}$ such that $A$ occurs as a direct summand of $\ind_{\bL\subseteq\bP}^{\bG}(A_0)$ (see \cite[Theorem 4.4]{lusztig:1985:character-sheaves}).

Let us now fix a pair $(\bL,\bP) \in \mathcal{Z}$ such that there exists a cuspidal character sheaf $A_0 \in \widehat{\bL}$. By \cite[Proposition 3.12]{lusztig:1985:character-sheaves} we have $A_0$ is isomorphic to an intersection cohomology complex $\IC(\overline{\Sigma},\mathscr{E})[\dim\Sigma]$ where $\Sigma \subset \bL$ is the inverse image under $\bL \mapsto \bL/Z^{\circ}(\bL)$ of an isolated conjugacy class and $\mathscr{E}$ is a local system on $\Sigma$. From the proof of this result we know that $(\Sigma,\mathscr{E})$ is a cuspidal pair in the sense of \cite[Definition 2.4]{lusztig:1984:intersection-cohomology-complexes}. Of particular interest to us will be the special case where $\supp(A_0) \cap \bL_{\uni} \neq \emptyset$, where for any connected reductive algebraic group $\bH$ we denote by $\bH_{\uni}$ the variety of unipotent elements. Assume this is so then there exists a triple $(\mathcal{O}_0,\mathscr{E}_0,\mathscr{L})$ consisiting of: a unipotent conjugacy class $\mathcal{O}_0 \subset \bL$, a cuspidal local system $\mathscr{E}_0$ on $\mathcal{O}_0$ and a local system $\mathscr{L} \in \mathcal{S}(Z^{\circ}(\bL))$ such that $A_0$ is isomorphic to 
\begin{equation}\label{eq:cusp-uni-sup}
A_{\mathscr{L}} := \IC(\overline{\Sigma},\mathscr{E}_0\boxtimes\mathscr{L})[\dim\Sigma]
\end{equation}
where $\Sigma = \mathcal{O}_0Z^{\circ}(\bL)$. Here we consider $\mathscr{E}_0\boxtimes\mathscr{L}$ as a local system on $\mathcal{O}_0 \times Z^{\circ}(\bL)$ which we identify with the open subset $\mathcal{O}_0Z^{\circ}(\bL) \subseteq \overline{\mathcal{O}_0}Z^{\circ}(\bL)$ under the multiplication morphism in $\bL$. With all of this we may now define a map
\begin{equation}\label{eq:ind-P-construction}
(\bL,\bP,\mathcal{O}_0,\mathscr{E}_0,\mathscr{L}) \longrightarrow \ind_{\bL\subseteq\bP}^{\bG}(A_{\mathscr{L}})
\end{equation}
\end{pa}

\begin{assumption}
From this point forward $\Sigma$ will always denote the variety $\mathcal{O}_0Z^{\circ}(\bL)$ where $\mathcal{O}_0 \subset \bL$ is a unipotent conjugacy class supporting a cuspidal local system.
\end{assumption}

\begin{pa}
Let $A \in \widehat{\bG}$ be a character sheaf which occurs as a direct summand of the induced complex $\ind_{\bL\subseteq\bP}^{\bG}(A_0)$ (where $A_0 \in \widehat{\bL}$ is still assumed to be cuspidal) then by \cite[\S2.9]{lusztig:1986:on-the-character-values} we have
\begin{equation}\label{eq:supp}
\supp A = \bigcup_{x\in\bG} x(\supp A_0)\bU_{\bP}x^{-1}
\end{equation}
where $\bU_{\bP}\leqslant \bP$ is the unipotent radical of $\bP$. In particular we have $\supp A \cap \bG_{\uni} \neq \emptyset$ if and only if $\supp A_0 \cap \bL_{\uni} \neq \emptyset$. Hence if we are only interested in character sheaves whose support contains unipotent elements then we need only concern ourselves with those character sheaves occurring in an induced complex of the form given in \cref{eq:cusp-uni-sup}. We end our discussion of induced complexes with the following lemma which gives some facts concerning the inverse image of an induced complex.
\end{pa}

\begin{lem}\label{lem:F-action-ind}
We denote by $\bH$ a connected reductive algebraic group and by $\bP$ a parabolic subgroup of $\bH$ with Levi complement $\bL$. Furthermore we assume that $\bG$ is a connected reductive algebraic group and $i : \bG \to \bH$ is a bijective morphism of varieties.
\begin{enumerate}[label=(\alph*)]
\item For any $A_0 \in \widehat{\bL}$ we have
\begin{equation*}
i^*\ind_{\bL\subseteq\bP}^{\bH}(A_0) = \ind_{i^{-1}(\bL)\subseteq i^{-1}(\bP)}^{\bG}(i^*A_0).
\end{equation*}
Furthermore any character sheaf $A \in \widehat{\bH}$ is cuspidal if and only if $i^*A \in \widehat{\bG}$ is cuspidal.

\item Let us also assume that $\bG = \bH$ and $\bL = i(\bL)$ then any isomorphism $\phi : i^*A_0 \to A_0$ induces an isomorphism
\begin{equation*}
\tilde{\phi} : i^*\ind_{\bL\subseteq i(\bP)}^{\bH}(A_0) \to \ind_{\bL\subseteq \bP}^{\bH}(A_0).
\end{equation*}
\end{enumerate}
\end{lem}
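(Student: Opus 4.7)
The plan is to reduce both parts to the construction of induction recalled in \cref{pa:definition-of-induction} and to chase the base-change isomorphism of \cref{pa:direct-image-commutative} through the associated diagram. The crucial observation is that $i : \bG \to \bH$, being a bijective morphism of varieties over $\mathbb{K}$, is a universal homeomorphism, so that $i^*$ is compatible with $!$-direct images and with smooth pullbacks; this is what permits every sheaf-theoretic operation appearing in the definition of $\ind$ to be transported through $i$. In particular $i^{-1}$ induces a bijection on parabolic subgroups and takes Levi complements to Levi complements, so the diagrams producing $\ind_{\bL\subseteq\bP}^{\bH}(A_0)$ and $\ind_{i^{-1}(\bL)\subseteq i^{-1}(\bP)}^{\bG}(i^*A_0)$ have exactly the same shape.

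For part (a) I would build, in parallel to the diagram $\bL \xleftarrow{\pi} \hat X \xrightarrow{\sigma} \tilde X \xrightarrow{\tau} \bH$ used to define $\ind_{\bL\subseteq\bP}^{\bH}$, the analogous diagram $i^{-1}(\bL) \xleftarrow{\pi'} \hat X' \xrightarrow{\sigma'} \tilde X' \xrightarrow{\tau'} \bG$ defining $\ind_{i^{-1}(\bL)\subseteq i^{-1}(\bP)}^{\bG}$. The morphism $i$ induces bijective morphisms $\hat X' \to \hat X$ and $\tilde X' \to \tilde X$ fitting into commutative squares with $\pi,\sigma,\tau$ and their primed counterparts, and restricting to a bijective morphism $i^{-1}(\bL)\to \bL$. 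Applying $i^*$ to the defining equality $\tilde\sigma D = \tilde\pi A_0$ of the intermediate perverse sheaf $D$ and invoking \cref{pa:direct-image-commutative} then identifies $i^*D$ with the canonical intermediate sheaf $D'$ associated to $i^*A_0$ on $\tilde X'$; a second application of base change, this time to the proper map $\tau$, yields $i^*\tau_!D = \tau'_!D'$, which is the required equality. The cuspidality assertion follows once one observes that, because $i$ is a universal homeomorphism, $i^*$ induces a bijection on the sets of character sheaves and preserves direct-sum decompositions, while $\bP \mapsto i^{-1}(\bP)$ bijects parabolic subgroups of $\bH$ with parabolic subgroups of $\bG$.

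For part (b) I would deduce the statement directly from part (a) applied to the parabolic $i(\bP) \leqslant \bH$ with Levi complement $i(\bL) = \bL$. Since $i$ is bijective we have $i^{-1}(i(\bP)) = \bP$ and $i^{-1}(\bL) = \bL$, so part (a) supplies a canonical identification $i^*\ind_{\bL\subseteq i(\bP)}^{\bH}(A_0) = \ind_{\bL\subseteq \bP}^{\bH}(i^*A_0)$. Applying the functor $\ind_{\bL\subseteq\bP}^{\bH}$ to the given isomorphism $\phi : i^*A_0 \to A_0$ and composing with the above identification produces the desired isomorphism $\widetilde\phi$.

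The main obstacle is essentially bookkeeping rather than conceptual: one must verify that the intermediate perverse sheaf $D$ on $\tilde X$, characterised by $\tilde\sigma D = \tilde\pi A_0$, is functorial in $A_0$ and genuinely commutes with $i^*$. This amounts to showing that $i^*D$ satisfies the analogous characterisation with $A_0$ replaced by $i^*A_0$, which in turn rests on the fact that $\pi$ and $\sigma$ are smooth with connected fibres so that the shifted pullbacks $\tilde\pi$ and $\tilde\sigma$ are fully faithful and $D$, $D'$ are unique up to canonical isomorphism. Once this uniqueness is in hand, the compatibility with $i^*$ is automatic and the two base-change steps can be combined to give the claim.
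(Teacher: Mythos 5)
Your proposal is correct. For part (a) it is essentially the paper's argument: set up the two parallel induction diagrams, use the uniqueness of the intermediate sheaf $D$ (guaranteed by full faithfulness of $\tilde\sigma$) to identify $i^*D$ with the intermediate sheaf for $i^*A_0$, and finish with the base-change identity $i^*\tau_! = \tau'_! i^*$. For part (b) you take a genuinely different, more modular route: you deduce it from part (a) (with $\bH=\bG$, parabolic $i(\bP)$, Levi $i(\bL)=\bL$) together with the observation that $\ind_{\bL\subseteq\bP}^{\bH}$ is functorial in $A_0$, so that $\phi$ pushes forward to a morphism of induced complexes. The paper instead argues directly: it forms the two induction diagrams for $\bP$ and $\bQ = i(\bP)$, uses $\tilde\pi_{\bP}\phi$ to get an isomorphism $\tilde\sigma_{\bP}i^*K' \to \tilde\sigma_{\bP}K$ between the shifted pullbacks of the intermediate sheaves, invokes full faithfulness of $\tilde\sigma_{\bP}$ to descend this to $\phi':i^*K'\to K$, and sets $\tilde\phi = (\tau_{\bP})_!\phi'$. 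Both routes consume exactly the same two ingredients — base change and full faithfulness of $\tilde\sigma$ — so the content is the same; the paper's direct version is more self-contained while yours makes the logical dependence on (a) explicit. A small bonus of your write-up is the justification via $i$ being a universal homeomorphism: the paper's invocation of the base-change statement from \cref{pa:direct-image-commutative} is phrased for smooth morphisms, which a bijective morphism in characteristic $p$ need not be (Frobenius being the basic example); the correct general reason the step works is proper base change along the finite map $i$, which your universal-homeomorphism observation points at cleanly.
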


\begin{proof}
Firstly, by \cite[\S24.1 - Proposition B]{humphreys:1975:linear-algebraic-groups}, we have $i$ induces a bijection between the Borel subgroups of $\bG$ and $\bH$ so this implies that $\bQ := i^{-1}(\bP)$ is a parabolic subgroup of $\bG$ with Levi complement $\bM := i^{-1}(\bL)$. For any object $\square$ introduced in \cref{pa:definition-of-induction} we write $\square_{\bL\subseteq\bP}^{\bH}$ (resp.\ $\square_{\bM\subseteq\bQ}^{\bG}$) to indicate that it is defined with respect to $\ind_{\bL\subseteq\bP}^{\bH}$ (resp.\ $\ind_{\bM\subseteq\bQ}^{\bG}$). We now have a diagram
\begin{center}
\begin{tikzcd}
\bM \arrow{d}[swap]{i} & \hat{X}_{\bM\subseteq\bQ}^{\bG} \arrow{l}[swap]{\pi_{\bM\subseteq\bQ}^{\bG}}\arrow{d}[swap]{i}\arrow{r}{\sigma_{\bM\subseteq\bQ}^{\bG}} & \tilde{X}_{\bM\subseteq\bQ}^{\bG} \arrow{d}[swap]{i}\arrow{r}{\tau_{\bM\subseteq\bQ}^{\bG}} & \bG \arrow{d}[swap]{i}\\
\bL & \hat{X}_{\bL\subseteq\bP}^{\bH} \arrow{l}[swap]{\pi_{\bL\subseteq\bP}^{\bH}} \arrow{r}{\sigma_{\bL\subseteq\bP}^{\bH}} & \tilde{X}_{\bL\subseteq\bP}^{\bH} \arrow{r}{\tau_{\bL\subseteq\bP}^{\bH}} & \bH
\end{tikzcd}
\end{center}
where the vertical maps are the obvious actions of $i$. The squares of the above diagram are clearly commutative. Let $D$ be the canonical perverse sheaf on $\tilde{X}_{\bL\subseteq\bP}^{\bH}$ satisfying $\tilde{\pi}_{\bL\subseteq\bP}^{\bH}A_0 = \tilde{\sigma}_{\bL\subseteq\bP}^{\bH}D$. The fibres of $\pi_{\bL\subseteq\bP}^{\bH}$ and $\pi_{\bM\subseteq\bQ}^{\bG}$ have the same dimension, as do the fibres of $\sigma_{\bL\subseteq\bP}^{\bH}$ and $\sigma_{\bM\subseteq\bQ}^{\bG}$, therefore we have $i^*D$ satisfies $\tilde{\pi}_{\bM\subseteq\bQ}^{\bG}i^*A_0 = \tilde{\sigma}_{\bM\subseteq\bQ}^{\bG} i^*D$ because the inverse image is contravariant. By definition we have $\ind_{\bL\subseteq\bP}^{\bH}(A_0) = (\tau_{\bL\subseteq\bP}^{\bH})_!D$ and $\ind_{\bM\subseteq\bQ}^{\bG}(i^*A_0) = (\tau_{\bM\subseteq\bQ}^{\bG})_!i^*D$ so the first part of the lemma follows if we can show the equality $i^*(\tau_{\bL\subseteq\bP}^{\bH})_!D = (\tau_{\bM\subseteq\bQ}^{\bG})_!i^*D$ but this is just \cref{pa:direct-image-commutative}. The conclusion concerning cuspidality is an immediate consequence of the first part, which proves (a).

We now prove (b). Let $\bQ = i(\bP)$ then collapsing the notation $\square_{\bL\subset\bP}^{\bH}$ simply to $\square_{\bP}$ we have a diagram
\begin{center}
\begin{tikzcd}
\bL \arrow{d}[swap]{i} & \hat{X}_{\bP} \arrow{l}[swap]{\pi_{\bP}}\arrow{d}[swap]{i}\arrow{r}{\sigma_{\bP}} & \tilde{X}_{\bP} \arrow{d}[swap]{i}\arrow{r}{\tau_{\bP}} & \bH \arrow{d}[swap]{i}\\
\bL & \hat{X}_{\bQ} \arrow{l}[swap]{\pi_{\bQ}} \arrow{r}{\sigma_{\bQ}} & \tilde{X}_{\bQ} \arrow{r}{\tau_{\bQ}} & \bH
\end{tikzcd}
\end{center}
with commutative squares. Let $K$ (resp.\ $K'$) be the canonical perverse sheaf on $\tilde{X}_{\bP}$ (resp.\ $\tilde{X}_{\bQ}$) satisfying $\tilde{\pi}_{\bP}A_0 = \tilde{\sigma}_{\bP}K$ (resp.\ $\tilde{\pi}_{\bQ}A_0 = \tilde{\sigma}_{\bQ}K'$). Now $\tilde{\pi}_{\bP}\phi$ defines an isomorphism $\tilde{\pi}_{\bP}i^*A_0 \to \tilde{\pi}_{\bP}A_0$ and using the commutativity of the above diagram we may view this as an isomorphism $\tilde{\sigma}_{\bP}i^*K' \to \tilde{\sigma}_{\bP}K$. As $\sigma_{\bP}$ is smooth with connected fibres we have $\tilde{\sigma}_{\bP}$ is a fully faithful functor (see \cite[1.8.3]{lusztig:1985:character-sheaves}), hence there exists a unique isomorphism $\phi' : i^*K' \to K$ such that $\tilde{\pi}_{\bP}\phi = \tilde{\sigma}_{\bP}\phi'$. Using the arguments above we see that $\tilde{\phi} = (\tau_{\bP})_!\phi'$ gives the required isomorphism.
\end{proof}

\section{The Space of Unipotently Supported Class Functions}\label{sec:space-unip-supp-class-func}
\begin{pa}\label{pa:gen-spring-cor}
Let $\mathcal{N}_{\bG}$ denote the set of all pairs $\iota = (\mathcal{O},\mathscr{E})$ where $\mathcal{O}$ is a unipotent conjugacy class of $\bG$ and $\mathscr{E}$ is an irreducible $\bG$-equivariant local system on $\mathcal{O}$. We denote by $\mathcal{N}_{\bG}^0 \subseteq \mathcal{N}_{\bG}$ the subset consisting of those pairs $(\mathcal{O},\mathscr{E})$ such that $\mathscr{E}$ is a cuspidal local system on $\mathcal{O}$ (see \cite[Definition 2.4]{lusztig:1984:intersection-cohomology-complexes}); we call the elements of $\mathcal{N}_{\bG}^0$ cuspidal pairs. Given a pair $\iota$ we will denote the class $\mathcal{O}$ by $\mathcal{O}_{\iota}$ and the local system $\mathscr{E}$ by $\mathscr{E}_{\iota}$.

Let us denote by $\widetilde{\mathcal{M}}_{\bG}$ the set of all pairs $(\bL,\nu)$ consisting of a Levi subgroup $\bL \in \mathcal{L}$ and a cuspidal pair $\nu \in \mathcal{N}_{\bL}^0$. We have $\bG$ acts naturally on $\widetilde{\mathcal{M}}_{\bG}$ by conjugation and we denote by $[\bL,\nu]$ the orbit containing $(\bL,\nu)$. We also denote by $\mathcal{M}_{\bG}$ the set of all such orbits. Recall that in \cite[Theorem 6.5]{lusztig:1984:intersection-cohomology-complexes} Lusztig has associated to every pair $\iota \in \mathcal{N}_{\bG}$ a unique orbit $\mathcal{C}_{\iota} \in \mathcal{M}_{\bG}$.

\begin{assumption}
For each $\iota \in \mathcal{N}_{\bG}$ we will now choose a representative $(\bL_{\iota},\nu_{\iota}) \in \mathcal{C}_{\iota}$ with $\bL_{\iota} \in \mathcal{L}_{\std}$. For convenience, we will assume that if $\iota, \iota' \in \mathcal{N}_{\bG}$ satisfy $\mathcal{C}_{\iota} = \mathcal{C}_{\iota'}$ then $(\bL_{\iota},\nu_{\iota}) = (\bL_{\iota'},\nu_{\iota'})$.
\end{assumption}

\noindent In \cite{lusztig:1984:intersection-cohomology-complexes} it was shown that we have a disjoint union
\begin{equation*}
\mathcal{N}_{\bG} = \bigsqcup_{[\bL,\upsilon] \in \mathcal{M}_{\bG}} \bl{I}[\bL,\upsilon] \qquad\text{where}\qquad \bl{I}[\bL,\upsilon] = \{\iota \in \mathcal{N}_{\bG} \mid (\bL_{\iota},\upsilon_{\iota}) \in [\bL,\upsilon]\}.
\end{equation*}
We call $\bl{I}[\bL,\upsilon]$ a \emph{block} of $\mathcal{N}_{\bG}$. If $\nu \in \mathcal{N}_{\bL}^0$ is cuspidal then $W_{\bG}(\bL) = N_{\bG}(\bL)/\bL$ is a Coxeter group (see \cite[Theorem 9.2(a)]{lusztig:1984:intersection-cohomology-complexes}) and we have a bijection
\begin{equation}\label{eq:gen-spring-cor}
\bl{I}[\bL,\nu] \longleftrightarrow \Irr(W_{\bG}(\bL))
\end{equation}
for all $[\bL,\nu] \in \mathcal{M}_{\bG}$, which we denote $\iota \mapsto E_{\iota}$. This is known as the generalised Springer correspondence. If $\bL$ is a torus then $\nu$ is simply the pair consisting of the trivial class and the trivial local system, in which case this bijection is the classical Springer correspondence and we call $\mathscr{I}[\bL,\nu]$ the Springer block.
\end{pa}

\begin{pa}\label{pa:decomp-by-end-algebra}
To describe the correspondence in \cref{eq:gen-spring-cor} we will need to recall a description of semisimple objects in $\mathscr{M}\bG$ following \cite[3.7]{lusztig:1984:intersection-cohomology-complexes}. Let $K \in \mathscr{M}\bG$ be semisimple and let $\mathcal{A} = \End(K)$ be the endomorphism algebra of $K$, by which we mean the algebra $\Hom_{\mathscr{M}\bG}(K,K)$ in the category $\mathscr{M}\bG$. Assume $E$ is any finite dimensional $\mathcal{A}$-module then we define
\begin{equation}\label{eq:K-nu-E-L}
K_E = \Hom_{\mathcal{A}}(E,K) \in \mathscr{M}\bG.
\end{equation}
To see that this is an object of $\mathscr{M}\bG$ we can construct this in the following way. Let us pick a presentation $\mathcal{A}^m \overset{\varphi}{\to} \mathcal{A}^n \to E \to 0$ of the module $E$. Applying $\Hom_{\mathcal{A}}(-,K)$ and using the fact that we have an isomorphism $K \cong \Hom_{\mathcal{A}}(\mathcal{A},K)$ of $\mathcal{A}$-modules we get a diagram
\begin{equation}\label{eq:diag-Hom-MG}
\begin{tikzcd}
0 \arrow{r}{} & \Hom_{\mathcal{A}}(E,K) \arrow{d}{\cong}\arrow{r}{} & \Hom_{\mathcal{A}}(\mathcal{A}^n,K) \arrow{d}{\cong}\arrow{r}{\varphi^*} & \Hom_{\mathcal{A}}(\mathcal{A}^m,K) \arrow{d}{\cong}\\
0 \arrow{r}{} & \Ker_{\mathscr{M}\bG}(\varphi^*) \arrow{r}{} & K^n \arrow{r}{\varphi^*} & K^m
\end{tikzcd}
\end{equation}
with exact rows ($\varphi^*$ is the map induced by $\varphi$ and the vertical arrows are $\mathcal{A}$-module isomorphisms). The $\mathcal{A}$-module homomorphism $\varphi^*$ is also a morphism in $\mathscr{M}\bG$ because $\mathscr{M}\bG$ is $\Ql$-linear and the kernel exists because $\mathscr{M}\bG$ is an abelian category. Hence $\Hom_{\mathcal{A}}(E,K) \cong \Ker(\varphi^*) \in \mathscr{M}\bG$ as desired. Now $K_E$ is a simple object of $\mathscr{M}\bG$ if and only if $E$ is a simple $\mathcal{A}$-module and we have a canonical isomorphism
\begin{equation}\label{eq:K-nu-L-decomp}
\bigoplus_{E \in \Irr(\mathcal{A})} (E \otimes K_E) \cong K
\end{equation}
in $\mathscr{M}\bG$ given by $e\otimes f \mapsto f(e)$ in each summand.
\end{pa}

\begin{pa}\label{pa:unip-sup-char-sheaves}
Let $\bL \in \mathcal{L}$ be a Levi subgroup and let $\nu = (\mathcal{O}_0,\mathscr{E}_0) \in \mathcal{N}_{\bL}^0$ be a cuspidal pair then following \cite[2.3]{lusztig:1986:on-the-character-values} we define for any local system $\mathscr{L} \in \mathcal{S}(Z^{\circ}(\bL))$ a semisimple perverse sheaf $K_{\mathscr{L}} \in \mathscr{M}\bG$ in the following way. We define an open subset $\Sigma_{\reg}= \mathcal{O}_0\cdot Z^{\circ}(\bL)_{\reg} \subseteq \Sigma$ where $Z^{\circ}(\bL)_{\reg} = \{z \in Z^{\circ}(\bL) \mid C_{\bG}^{\circ}(z) = \bL\}$ and we denote by $Y$ the locally closed smooth irreducible subvariety of $\bG$ given by $\cup_{x \in \bG} x\Sigma_{\reg}x^{-1}$. We have the following diagram
\begin{equation}
\begin{tikzcd}
\Sigma & \hat{Y} \arrow{l}[swap]{\alpha}\arrow{r}{\beta} & \tilde{Y} \arrow{r}{\gamma} & Y
\end{tikzcd}
\end{equation}
where
\begin{gather*}
\begin{aligned}
\hat{Y} &= \{(g,x) \in \bG \times \bG \mid x^{-1}gx \in \Sigma\} \quad&\quad \tilde{Y} &= \{(g,x\bL) \in \bG \times (\bG/\bL) \mid x^{-1}gx \in \bL\}
\end{aligned}\\
\begin{aligned}
\alpha(g,x) &= x^{-1}gx \quad&\quad \beta(g,x) &= (g,x\bL) \quad&\quad \gamma(g,x\bL) &= g.
\end{aligned}
\end{gather*}
Since the local system $\mathscr{E}_0$ is $\bL$-equivariant there exists a canonical local system $\tilde{\mathscr{E}}_0$ on $\tilde{Y}$ such that $\beta^*\tilde{\mathscr{E}}_0 = \alpha^*(\mathscr{E}_0\boxtimes\Ql)$ (see \cite[1.9.3]{lusztig:1985:character-sheaves-I}). Let $\delta : \tilde{Y} \to Z^{\circ}(\bL)$ be the map given by $\delta(g,x\bL) = (xgx^{-1})_{\sss}$ (where $h_{\sss}$ is the semisimple part of $h$ for all $h \in \bG$) then $\tilde{\mathscr{L}} = \delta^*\mathscr{L}$ is a local system on $\tilde{Y}$ hence so is $\tilde{\mathscr{E}}_0\otimes\tilde{\mathscr{L}}$. By \cite[3.2]{lusztig:1984:intersection-cohomology-complexes} we have $\gamma$ is a Galois covering so $\gamma_* = \gamma_!$ because $\gamma$ is finite (hence proper), which means $\gamma_*(\tilde{\mathscr{E}}_0\otimes\tilde{\mathscr{L}}) = \gamma_!(\tilde{\mathscr{E}}_0\otimes\tilde{\mathscr{L}})$ is a semisimple local system on $Y$. We now define $K_{\mathscr{L}} \in \mathscr{M}\bG$ to be the complex $\IC(\overline{Y},\gamma_*(\tilde{\mathscr{E}}_0\otimes\tilde{\mathscr{L}}))[\dim Y]$. With this we have defined a map
\begin{equation}\label{eq:ind-w-P-construction}
(\bL,\mathcal{O}_0,\mathscr{E}_0,\mathscr{L}) \longrightarrow K_{\mathscr{L}}
\end{equation}
\end{pa}

\begin{pa}\label{pa:endomorphism-algebra-A}
Let us keep the notation of \cref{pa:unip-sup-char-sheaves}. We denote by $N_{\bG}(\bL,\mathscr{L})$ the set of all elements $n \in N_{\bG}(\bL)$ such that $(\ad n)^*\mathscr{L}$ is isomorphic to $\mathscr{L}$, where $\ad n : \bG \to \bG$ is the conjugation morphism given by $(\ad n)(g) = ngn^{-1}$ for all $g \in \bG$. Clearly $\bL$ is a normal subgroup of $N_{\bG}(\bL,\mathscr{L})$ and we denote the quotient group $N_{\bG}(\bL,\mathscr{L})/\bL$ by $W_{\bG}(\bL,\mathscr{L})$. Let us denote by $\mathcal{A}_{\mathscr{L}}$ the endomorphism algebra of the semisimple perverse sheaf $K_{\mathscr{L}}$. By \cite[\S2.4(a)]{lusztig:1986:on-the-character-values} there exists a set of basis elements $\{\Theta_v \mid v \in W_{\bG}(\bL,\mathscr{L})\} \subset \mathcal{A}_{\mathscr{L}}$ such that the map $v \mapsto \Theta_v$ defines a canonical algebra isomorphism
\begin{equation}\label{eq:end-grp-alg-iso}
\Ql W_{\bG}(\bL,\mathscr{L}) \cong \mathcal{A}_{\mathscr{L}}.
\end{equation}
Let us now assume that $\mathscr{L} = \Ql$ then we denote $\mathcal{A}_{\mathscr{L}}$ simply by $\mathcal{A}$ and $K_{\mathscr{L}}$ by $K$. In this case we have $W_{\bG}(\bL,\mathscr{L}) = W_{\bG}(\bL)$ hence $\mathcal{A}$ is isomorphic to $\Ql W_{\bG}(\bL)$. Using the description given in \cref{pa:decomp-by-end-algebra} we have $E\mapsto K_E = \Hom_{W_{\bG}(\bL)}(E,K)$ gives a bijection between simple $W_{\bG}(\bL)$-modules and the simple summands of $K$ (up to isomorphism). Furthermore by \cite[Theorem 6.5]{lusztig:1984:intersection-cohomology-complexes} we have for each $E$ that there exists a unique pair $\iota \in \mathscr{I}[\bL,\nu]$ satisfying
\begin{equation*}
K_E|_{\bG_{\uni}} \cong \IC(\overline{\mathcal{O}}_{\iota},\mathscr{E}_{\iota})[\dim\mathcal{O}_{\iota} + \dim Z^{\circ}(\bL)].
\end{equation*}
The composition of the maps $E \mapsto K_E \mapsto \iota$ then gives the bijection in \cref{eq:gen-spring-cor}. We will denote by $K_{\iota} \in \mathscr{D}\bG$ the extension by 0 of the complex $K_E|_{\bG_{\uni}}$. Clearly we have $\supp(K_{\iota})\subseteq \bG_{\uni}$.
\end{pa}

\begin{rem}
In \cite[3.4]{lusztig:1984:intersection-cohomology-complexes} the definition of $N_{\bG}(\bL,\mathscr{L})$ includes the condition that $(\ad n)(\Sigma) = \Sigma$. However this is automatically satisfied in our situation by \cite[Theorem 9.2(b)]{lusztig:1984:intersection-cohomology-complexes}.
\end{rem}

\begin{pa}\label{pa:K-nu-L-ind-A-nu-L}
We now end this section by explicitly describing how $K_{\mathscr{L}}$ is related to $\ind_{\bL\subseteq\bP}^{\bG}(A_{\mathscr{L}})$ (for this we follow \cite{lusztig:1984:intersection-cohomology-complexes}). Here we assume $(\bL,\bP) \in \mathcal{Z}$ and $\nu = (\mathcal{O}_0,\mathscr{E}_0) \in \mathcal{N}_{\bL}^0$. Recalling the notation of \cref{pa:definition-of-induction} we wish to describe the complex $D$. To do this we first observe that
\begin{equation*}
\tilde{\pi}A_{\mathscr{L}} = \IC(\hat{X}',\pi^*(\mathscr{E}_0\boxtimes\mathscr{L}))[\dim \overline{\Sigma}+\dim\bG + \dim\bU_{\bP}]
\end{equation*}
where $\bU_{\bP}$ is the unipotent radical of $\bP$ and $\hat{X}' = \pi^{-1}(\overline{\Sigma}) = \{(g,h) \in \bG \times \bG \mid h^{-1}gh \in \overline{\Sigma}\cdot\bU_{\bP}\}$. Note that for $l \in \bL$ we have $\pi^{-1}(l) = \{(g,h) \in \bG \times \bG \mid h^{-1}gh \in l\cdot \bU_{\bP}\}$ hence the fibres of $\pi$ have dimension $\dim\bG+\dim\bU_{\bP}$.

Let us fix a set of coset representatives $\{h_1,\dots,h_k\}$ of $\bP$ in $\bG$ then we define $j : \tilde{X} \to \hat{X}$ by setting $j(g,h\bP) = (g,h_i)$ where $1\leqslant i \leqslant k$ is the unique index such that $h \in h_i\bP$. By the construction in \cite[1.9.3]{lusztig:1985:character-sheaves-I} we see that
\begin{equation*}
D = j^*\IC(\hat{X}',\pi^*(\mathscr{E}_0\boxtimes\mathscr{L}))[\dim \overline{\Sigma}+\dim\bG + \dim\bU_{\bP} - \dim\bP] = \IC(\tilde{X}',\overline{\mathscr{E}_0\boxtimes\mathscr{L}})[\dim \tilde{X}']
\end{equation*}
where $\tilde{X}' = j^{-1}(\hat{X}') = \{(g,h_i\bP) \in \bG \times \bG/\bP \mid h_i^{-1}gh_i \in \overline{\Sigma}\cdot\bU_{\bP}\}$ and $\overline{\mathscr{E}_0\boxtimes\mathscr{L}} = j^*\pi^*(\mathscr{E}_0\boxtimes\mathscr{L})$. Note that the equality $\dim \tilde{X}' = \dim Y = \dim\bG - \dim\bP + \dim\overline{\Sigma}+ \dim\bU_{\bP}$ is given in the proof of \cite[Lemma 4.3(a)]{lusztig:1984:intersection-cohomology-complexes}.

Let $Y$ and $\tilde{Y}$ be as in \cref{pa:unip-sup-char-sheaves} then by \cite[Lemma 4.3(b)]{lusztig:1984:intersection-cohomology-complexes} we have $\tau(\tilde{X}') = \overline{Y}$ hence $\tau^{-1}(Y) \subset \tilde{X}' \subset \tilde{X}$. Furthermore, by \cite[Lemma 4.3(c)]{lusztig:1984:intersection-cohomology-complexes}, the map $(g,h\bL) \to (g,h\bP)$ defines an isomorphism $\kappa : \tilde{Y} \to \tau^{-1}(Y)$. As $\tau^{-1}(Y) \subset \tilde{X}'$ we have $D|_{\tau^{-1}(Y)} \cong \overline{\mathscr{E}_0\boxtimes\mathscr{L}}|_{\tau^{-1}(Y)}[\dim\tilde{X}']$ and $\kappa^*(\overline{\mathscr{E}_0\boxtimes\mathscr{L}}|_{\tau^{-1}(Y)}) \cong \tilde{\mathscr{E}}_0 \otimes \tilde{\mathscr{L}}$ (see \cite[4.4]{lusztig:1984:intersection-cohomology-complexes}) hence
\begin{equation*}
(\tau_!D)|_Y = \tau_!(D_{\tau^{-1}(Y)}) \cong \gamma_*\kappa^*(\overline{\mathscr{E}_0\boxtimes\mathscr{L}}|_{\tau^{-1}(Y)}[\dim\tilde{X}']) \cong \gamma_*(\tilde{\mathscr{E}}_0\otimes\tilde{\mathscr{L}})[\dim Y].
\end{equation*}
Here we have applied \cref{pa:direct-image-commutative} to obtain $\tau_! = \ID^*\circ\tau_! = \gamma_!\circ\kappa^* = \gamma_*\circ\kappa^*$ where the last equality follows from the fact that $\gamma$ is proper. With this we have the following result of Lusztig.
\end{pa}

\begin{prop}[{}{Lusztig, \cite[Proposition 4.5]{lusztig:1984:intersection-cohomology-complexes}}]\label{prop:K-nu-L-iso-to-induction}
The complex $\tau_!D$ is a perverse sheaf and is canonically isomorphic to $\IC(\overline{Y},\gamma_*(\tilde{\mathscr{E}}_0\otimes\tilde{\mathscr{L}}))[\dim Y]$. In particular $\ind_{\bL \subseteq\bP}^{\bG}(A_{\mathscr{L}})$ and $K_{\mathscr{L}}$ are canonically isomorphic in $\mathscr{M}\bG$.
\end{prop}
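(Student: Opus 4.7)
The plan is to appeal to the intersection-cohomology characterisation of perverse sheaves. Recall that if $\mathscr{F}$ is a local system on a smooth open dense subvariety $Y$ of an irreducible variety $\overline{Y}$, then a complex $K \in \mathscr{D}\overline{Y}$ equipped with an identification $K|_Y \cong \mathscr{F}[\dim Y]$ is canonically isomorphic to $\IC(\overline{Y},\mathscr{F})[\dim Y]$ precisely when $K$ is perverse and both $K$ and its Verdier dual satisfy the strict stratum-wise bounds $\dim\supp\mathscr{H}^i K < -i$ for every $i > -\dim Y$. The discussion in \cref{pa:K-nu-L-ind-A-nu-L} already identifies $(\tau_!D)|_Y$ with $\gamma_*(\tilde{\mathscr{E}}_0\otimes\tilde{\mathscr{L}})[\dim Y]$ through the isomorphism $\kappa:\tilde{Y}\to\tau^{-1}(Y)$, so I am reduced to establishing perversity together with these dimension bounds on $\overline{Y}\setminus Y$.

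First I would exploit properness. The map $\tau:\tilde{X}\to\bG$ factors through the projection $\bG\times(\bG/\bP)\to\bG$, which is proper since $\bG/\bP$ is projective. Restricting to the closed subvariety $\tilde{X}'\subseteq\tilde{X}$ preserves properness, and $\tau(\tilde{X}')=\overline{Y}$ by \cite[Lemma 4.3(b)]{lusztig:1984:intersection-cohomology-complexes}. Hence $\tau_!D=\tau_*D$ is supported on $\overline{Y}$, and since $D=\IC(\tilde{X}',\overline{\mathscr{E}_0\boxtimes\mathscr{L}})[\dim\tilde{X}']$ is a simple perverse sheaf, the decomposition theorem guarantees $\tau_*D$ is a finite direct sum of shifts of simple perverse sheaves.

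The crux is a smallness statement for $\tau|_{\tilde{X}'}$: for every $\bG$-stable locally closed stratum $\mathcal{C}\subseteq\overline{Y}\setminus Y$ and every $y\in\mathcal{C}$, one needs
\[
2\dim\tau^{-1}(y) < \mathrm{codim}_{\overline{Y}}\mathcal{C}.
\]
Granted this, proper base change gives $\mathscr{H}^i(\tau_!D)_y \cong H^{i+\dim\tilde{X}'}_c(\tau^{-1}(y),\overline{\mathscr{E}_0\boxtimes\mathscr{L}})$, and together with the vanishing of $H^j_c$ above twice the fibre dimension this yields both the support and the cosupport conditions, while simultaneously killing any unwanted summand arising from the decomposition theorem. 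Verifying the fibre-dimension estimate is what I expect to be the main obstacle: one must stratify $\overline{Y}\setminus Y$ by conjugacy-theoretic data, and for each $y\in\mathcal{C}$ count the parabolic cosets $h\bP\in\bG/\bP$ satisfying $h^{-1}yh\in\overline{\Sigma}\bU_{\bP}$. This is exactly the calculation carried out in \cite[Lemma 4.3]{lusztig:1984:intersection-cohomology-complexes}.

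With perversity of $\tau_!D$ and the dimension estimates in hand, the IC characterisation forces a canonical isomorphism $\tau_!D\cong\IC(\overline{Y},\gamma_*(\tilde{\mathscr{E}}_0\otimes\tilde{\mathscr{L}}))[\dim Y]$ extending the identification on $Y$ supplied by $\kappa$. Unwinding the definitions of $\ind_{\bL\subseteq\bP}^{\bG}(A_{\mathscr{L}})$ and $K_{\mathscr{L}}$ then yields the final canonical isomorphism.
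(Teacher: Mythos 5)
Your overall strategy — properness, decomposition theorem, dimension estimates, and the intersection-cohomology characterisation — is indeed how Lusztig proceeds; the paper under review does not reprove the proposition but cites \cite[Proposition 4.5]{lusztig:1984:intersection-cohomology-complexes}, with \cref{pa:K-nu-L-ind-A-nu-L} supplying only the identification over $Y$. However, there is a genuine gap at the crux of your sketch. Your proper base change formula $\mathscr{H}^i(\tau_!D)_y \cong H^{i+\dim\tilde{X}'}_c(\tau^{-1}(y),\overline{\mathscr{E}_0\boxtimes\mathscr{L}})$ treats $D$ as if it were the shifted local system $\overline{\mathscr{E}_0\boxtimes\mathscr{L}}[\dim\tilde{X}']$ on all of $\tilde{X}'$. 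But $\tilde{X}'$ is a $\bG/\bP$-bundle with fibre $\overline{\Sigma}\bU_{\bP}\cong\overline{\mathcal{O}_0}\times Z^{\circ}(\bL)\times\bU_{\bP}$, which is singular whenever $\mathcal{O}_0\neq\{1\}$, so $D=\IC(\tilde{X}',\overline{\mathscr{E}_0\boxtimes\mathscr{L}})[\dim\tilde{X}']$ has non-trivial stalks in a range of degrees over the boundary strata. The correct base change reads $\mathscr{H}^i(\tau_!D)_y \cong H^i_c(\tau^{-1}(y),D|_{\tau^{-1}(y)})$, and since $D$ is merely a perverse sheaf its cohomology sheaves live in degrees up to $0$ rather than being concentrated in degree $-\dim\tilde{X}'$. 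Your bound $2\dim\tau^{-1}(y)<\mathrm{codim}_{\overline{Y}}\mathcal{C}$ therefore only kills the stalk for $i\geq\mathrm{codim}_{\overline{Y}}\mathcal{C}$, which is much weaker than the required $i\geq-\dim\overline{Y}+\mathrm{codim}_{\overline{Y}}\mathcal{C}$.

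What the argument actually needs is a stratum-wise estimate: stratify $\tilde{X}'$ by the unipotent classes $\mathcal{O}'\subseteq\overline{\mathcal{O}_0}$ of $\bL$ and, for each stratum $S_{\mathcal{O}'}$ of codimension $a=\dim\mathcal{O}_0-\dim\mathcal{O}'$ in $\tilde{X}'$, bound $2\dim\bigl(\tau^{-1}(y)\cap S_{\mathcal{O}'}\bigr)+a$ by $\dim Y-\dim(\bG\text{-class of }y)$, with strict inequality off $Y$. Combined with the support condition $\dim\supp\mathscr{H}^jD<-j$ for $j>-\dim\tilde{X}'$ (coming from $D$ being an intersection cohomology complex) this yields the required support bound on $\tau_!D$; the cosupport bound follows by running the same argument for the Verdier dual of $D$ and using $\tau_!=\tau_*$. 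This refined estimate is the content of \cite[Lemma 4.4]{lusztig:1984:intersection-cohomology-complexes}, not of Lemma 4.3, which you cite: Lemma 4.3 concerns only $\tau^{-1}(Y)$ and the identification $\kappa:\tilde{Y}\to\tau^{-1}(Y)$, not the fibres over $\overline{Y}\setminus Y$. (If one already knew cleanness for the cuspidal pair $(\mathcal{O}_0,\mathscr{E}_0)$, the boundary strata of $\tilde{X}'$ would carry no stalks of $D$ and your naive fibre estimate would suffice; but Lusztig's argument at this stage does not rely on cleanness.)
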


\begin{rem}
Note that \cref{prop:K-nu-L-iso-to-induction} implies that $\ind_{\bL\subseteq\bP}^{\bG}(A_{\mathscr{L}})$ does not depend upon the choice of parabolic $\bP$ containing $\bL$ so in this situation we will simply write $\ind_{\bL}^{\bG}(A_{\mathscr{L}})$ when convenient.
\end{rem}

\section{\texorpdfstring{Bases of the Endomorphism Algebra $\mathcal{A}$}{Bases of the Endomorphism Algebra A}}\label{sec:bases-of-end-A}
\begin{pa}
Assume $\bL \in \mathcal{L}$ is a Levi subgroup supporting a cuspidal pair $(\mathcal{O}_0,\mathscr{E}_0) \in \mathcal{N}_{\bL}^0$ then we denote by $K$ the image of $(\bL,\mathcal{O}_0,\mathscr{E}_0,\Ql)$ under the map in \cref{eq:ind-w-P-construction} and by $\mathcal{A}$ the endomorphism algebra of $K$. As was mentioned in \cref{eq:end-grp-alg-iso} Lusztig has defined an isomorphism between $\mathcal{A}$ and the group algebra $\Ql W_{\bG}(\bL)$ by specifying a set of basis elements $\{\Theta_v \mid v \in W_{\bG}(\bL)\} \subset \mathcal{A}$. However, in \cite[\S6.A]{bonnafe:2004:actions-of-rel-Weyl-grps-I} Bonnaf\'{e} has defined an alternative basis $\{\Theta_v' \mid v \in W_{\bG}(\bL)\} \subset \mathcal{A}$ which also defines such an isomorphism. In this article we will need to use both bases and we recall here results of Bonnaf\'{e} concerning the relationship between the two.
\end{pa}

\begin{assumption}
We now assume that $u_0 \in \mathcal{O}_0$ is a fixed unipotent element. If $\bL$ and $\mathcal{O}_0$ are $F$-stable then we assume that $u_0 \in \mathcal{O}_0^F$.
\end{assumption}

\begin{pa}\label{pa:Lusztig-iso-A}
Assume $v \in W_{\bG}(\bL)$ then by \cite[eq.\ (5.4)]{bonnafe:2004:actions-of-rel-Weyl-grps-I} we may find a representative $\dot{v} \in N_{\bG}(\bL) \cap C_{\bG}^{\circ}(u_0)$ and we will assume that all such representatives are chosen in this way. Let us recall the notation of \cref{pa:unip-sup-char-sheaves}. For any $v \in W_{\bG}(\bL)$ we have by \cite[Theorem 9.2(b)]{lusztig:1984:intersection-cohomology-complexes} that there exists an isomorphism $\theta_v : \mathscr{E}_0\boxtimes\Ql \to (\ad\dot{v})^*(\mathscr{E}_0\boxtimes\Ql)$ which we assume fixed. This isomorphism induces an isomorphism $\hat{\theta}_v : \tilde{\mathscr{E}}_0 \to \gamma_v^*\tilde{\mathscr{E}}_0$ where $\gamma_v : \widetilde{Y} \to \widetilde{Y}$ is given by $\gamma_v(g,x\bL) = (g,x\dot{v}^{-1}\bL)$ (see the proof of \cite[Proposition 3.5]{lusztig:1984:intersection-cohomology-complexes}). By definition we have $\gamma_v\gamma = \gamma$ hence $\gamma_*\hat{\theta}_v$ defines an endomorphism of $\gamma_*\tilde{\mathscr{E}}_0$. As $K$ is the intersection cohomology complex $\IC(\overline{Y},\gamma_*\tilde{\mathscr{E}}_0)$ we may then define $\Theta_v$ to be the unique endomorphism of $K$ extending $\gamma_*\hat{\theta}_v$.

By \cite[Theorem 9.2(d)]{lusztig:1984:intersection-cohomology-complexes} there is a unique isomorphism $\theta_v$ for each $v \in W_{\bG}(\bL)$ such that $\Theta_v$ induces the identity on $\mathscr{H}_u^{-\dim Y}(K)$ where $u$ is any element of the induced unipotent class $\Ind_{\bL}^{\bG}(\mathcal{O}_0)$ (see \cite[Corollary 7.3(a)]{lusztig:1984:intersection-cohomology-complexes}). With this choice we have
\begin{equation}\label{eq:algebra-iso}
v \mapsto \Theta_v
\end{equation}
defines the required algebra isomorphism $\Ql W_{\bG}(\bL) \to \mathcal{A}$. Assume now that $\mathscr{L}$ is a local system on $Z^{\circ}(\bL)$. Let $K_{\mathscr{L}}$ be the image of $(\bL,\mathcal{O}_0,\mathscr{E}_0,\mathscr{L})$ under the map in \cref{eq:ind-w-P-construction} and let $\mathcal{A}_{\mathscr{L}}$ be the endomorphism algebra of $K_{\mathscr{L}}$ then from the discussion in \cite[2.3]{lusztig:1986:on-the-character-values} we have an embedding of algebras $\mathcal{A}_{\mathscr{L}} \hookrightarrow \mathcal{A}$ which corresponds under \cref{eq:algebra-iso} to the natural embedding $\Ql W_{\bG}(\bL,\mathscr{L}) \hookrightarrow \Ql W_{\bG}(\bL)$. In particular the restriction of \cref{eq:algebra-iso} to $W_{\bG}(\bL,\mathscr{L})$ defines the isomorphism mentioned in \cref{eq:end-grp-alg-iso}.
\end{pa}

\begin{rem}
Assume $\bL$ and $\mathcal{O}_0$ are $F$-stable. It is clear to see that if $\dot{v} \in N_{\bG}(\bL)\cap C_{\bG}^{\circ}(u_0)$ is a representative for $v \in W_{\bG}(\bL)$ then $\dot{v}^{-1} \in N_{\bG}(\bL)\cap C_{\bG}^{\circ}(u_0)$ is a representative of $v^{-1}$ and $F(\dot{v}) \in N_{\bG}(\bL)\cap C_{\bG}^{\circ}(u_0)$ is a representative of $F(v)$.
\end{rem}

\begin{pa}\label{pa:Bonnafe-iso-A}
In \cite[\S6.A]{bonnafe:2004:actions-of-rel-Weyl-grps-I} Bonnaf\'{e} shows that for every $v \in W_{\bG}(\bL)$ there exists an isomorphism $\theta_v' : \mathscr{E}_0 \to (\ad\dot{v})^*\mathscr{E}_0$ which induces the identity at the stalk of $u_0$. Clearly this also defines an isomorphism $\mathscr{E}_0\boxtimes\Ql \to (\ad\dot{v})^*(\mathscr{E}_0\boxtimes\Ql)$ which induces the identity at the stalk of any element $u_0z$ where $z\in Z^{\circ}(\bL)$; we will also denote this by $\theta_v'$. As is described in \cref{pa:Lusztig-iso-A} this isomorphism determines a unique endomorphism $\Theta_v'$ of $K$ and by \cite[Proposition 6.1]{bonnafe:2004:actions-of-rel-Weyl-grps-I} we have the map $v\mapsto \Theta_v'$ defines an algebra isomorphism $\Ql W_{\bG}(\bL) \to \mathcal{A}$. The following result describes the relationship between the two sets of basis elements for $\mathcal{A}$.
\end{pa}

\begin{prop}[{}{Bonnaf\'{e}, \cite[Corollary 6.2]{bonnafe:2004:actions-of-rel-Weyl-grps-I}}]\label{prop:bases-of-A}
There exists a linear character $\gamma_{\bL,u_0}^{\bG} \in \Irr(W_{\bG}(\bL))$ such that $\Theta_v' = \gamma_{\bL,u_0}^{\bG}(v)\Theta_v$ and so $\theta_v' = \gamma_{\bL,u_0}^{\bG}(v)\theta_v$ for all $v \in W_{\bG}(\bL)$.
\end{prop}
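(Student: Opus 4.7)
The plan is to exploit two facts: first, any two isomorphisms between the same pair of irreducible local systems differ by a scalar (Schur's lemma); second, both $v\mapsto\Theta_v$ and $v\mapsto\Theta_v'$ are algebra isomorphisms, so the resulting scalar function is forced to be multiplicative.

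More concretely, since $\mathscr{E}_0$ is an irreducible (indeed cuspidal) $\bL$-equivariant local system on $\mathcal{O}_0$, the local system $\mathscr{E}_0\boxtimes\Ql$ on $\Sigma = \mathcal{O}_0 Z^{\circ}(\bL)$ is irreducible. The element $\dot{v} \in N_{\bG}(\bL)$ preserves the isomorphism class of this pair (which is precisely what guarantees the existence of $\theta_v$ and $\theta_v'$ in the first place), so $(\ad\dot{v})^*(\mathscr{E}_0\boxtimes\Ql)$ is an irreducible local system on $\Sigma$ isomorphic to $\mathscr{E}_0\boxtimes\Ql$. Schur's lemma then shows that the space of morphisms between these two local systems is one-dimensional over $\Ql$, and hence there exists a unique scalar $\gamma(v) \in \Ql^{\times}$ with $\theta_v' = \gamma(v)\theta_v$. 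Since the construction recalled in \cref{pa:Lusztig-iso-A} carrying $\theta_v$ to $\Theta_v$ is $\Ql$-linear at every stage (the induced morphism $\hat{\theta}_v$ of $\tilde{\mathscr{E}}_0$ is built by functorial pullback, and $\Theta_v$ is then the unique extension to the intersection cohomology complex $K$), we likewise obtain $\Theta_v' = \gamma(v)\Theta_v$ in $\mathcal{A}$.

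To conclude, I would invoke the fact, recalled in \cref{pa:Lusztig-iso-A,pa:Bonnafe-iso-A}, that both assignments $v\mapsto\Theta_v$ and $v\mapsto\Theta_v'$ extend to algebra isomorphisms $\Ql W_{\bG}(\bL) \to \mathcal{A}$, and in particular are multiplicative. For any $v,w \in W_{\bG}(\bL)$ this yields
\begin{equation*}
\gamma(vw)\Theta_{vw} = \Theta_{vw}' = \Theta_v'\Theta_w' = \gamma(v)\gamma(w)\Theta_v\Theta_w = \gamma(v)\gamma(w)\Theta_{vw},
\end{equation*}
and cancelling the invertible element $\Theta_{vw}$ gives $\gamma(vw) = \gamma(v)\gamma(w)$. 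Thus $\gamma$, which we denote by $\gamma_{\bL,u_0}^{\bG}$, is a linear character of $W_{\bG}(\bL)$, proving both identities in the statement. The only delicate point in this plan is the Schur step---one must verify that $\Sigma$ is irreducible so that $\mathscr{E}_0\boxtimes\Ql$ is genuinely an irreducible local system on it; once that is in place, the remainder is essentially bookkeeping against the algebra-isomorphism statements already cited from Lusztig and Bonnaf\'e.
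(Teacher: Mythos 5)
Your proof is correct. Note, however, that the paper does not prove this proposition at all: it is stated as a cited result of Bonnaf\'e (his Corollary~6.2), so there is no in-paper proof to compare against. Your reconstruction --- Schur's lemma applied to the irreducible local system $\mathscr{E}_0\boxtimes\Ql$ on $\Sigma$ to produce a well-defined scalar $\gamma(v)\in\Ql^\times$ with $\theta_v'=\gamma(v)\theta_v$, linearity of the passage $\theta_v\mapsto\Theta_v$ to carry this to $\mathcal{A}$, and then multiplicativity of $\gamma$ forced by the fact that both $v\mapsto\Theta_v$ and $v\mapsto\Theta_v'$ are algebra isomorphisms $\Ql W_{\bG}(\bL)\to\mathcal{A}$ --- is the natural argument and, as far as the present paper is concerned, is exactly what one would substitute for the citation. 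On the one delicate point you flagged yourself: the multiplication map $\mathcal{O}_0\times Z^\circ(\bL)\to\Sigma$ is an isomorphism of varieties (injectivity follows from uniqueness of the Jordan decomposition, since $\mathcal{O}_0$ consists of unipotent elements and $Z^\circ(\bL)$ of central, hence semisimple, elements), so $\Sigma$ is irreducible, and $\mathscr{E}_0\boxtimes\Ql$ is an irreducible local system on it because $\mathscr{E}_0$ is irreducible (being a cuspidal local system). Thus the Schur step goes through as you intended.
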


\begin{pa}\label{pa:connected-centre}
As is remarked in \cite[Remark 6.4]{bonnafe:2004:actions-of-rel-Weyl-grps-I} the linear character $\gamma_{\bL,u_0}^{\bG}$ is known explicitly in almost all cases. Assume that $Z(\bG)$ is connected and $p$ is a good prime for $\bG$. By the reduction arguments given in \cite[\S4.A, \S4.B, Corollary 7.3]{bonnafe:2004:actions-of-rel-Weyl-grps-I} it is sufficient to describe this character when $\bG$ is an adjoint simple group. If $\bG$ is of type $\A_n$ or of exceptional type then $\bL$ is either $\bG$ or a torus and so $\gamma_{\bL,u_0}^{\bG}$ is the trivial character (see \cite[Corollary 6.9]{bonnafe:2004:actions-of-rel-Weyl-grps-I}). When $\bG$ is of type $\B_n$, $\C_n$ or $\D_n$ then in general $\gamma_{\bL,u_0}^{\bG}$ is not the identity. However, this character is described by work of Waldspurger \cite{waldspurger:2001:integrales-orbitales-nilpotentes} as is explained in \cite[Example 6.3]{bonnafe:2004:actions-of-rel-Weyl-grps-I} (see also \cite[Errata]{bonnafe:2005:actions-of-rel-Weyl-grps-II}).
\end{pa}

\section{Rational Structures}\label{sec:rational-structures}
\begin{pa}\label{pa:characteristic-function}
We say a character sheaf $A \in \widehat{\bG}$ is $F$-stable if we have an isomorphism $\phi_A : F^*A \to A$ in $\mathscr{D}\bG$ and we denote by $\widehat{\bG}^F \subseteq \widehat{\bG}$ the set of all $F$-stable character sheaves of $\bG$. Clearly this isomorphism induces a map $\mathscr{H}^iF^*A \to \mathscr{H}^iA$ for each $i \in \mathbb{Z}$ and hence a map $\mathscr{H}^i_xF^*A \to \mathscr{H}^i_xA$ for each $x \in \bG$; we denote both these maps again by $\phi_A$. If $x \in G = \bG^F$ then $\phi_A$ induces an automorphism of the stalk $\mathscr{H}^i_xA$ because $\mathscr{H}^i_xF^*A = \mathscr{H}^i_{F(x)}A = \mathscr{H}^i_xA$. For each such $A$ and $\phi_A$ we then define the \emph{characteristic function} $\chi_{A,\phi_A} : G \to \overline{\mathbb{Q}}_{\ell}$ by setting
\begin{equation*}
\chi_{A,\phi_A}(g) = \sum_i (-1)^i\Tr(\phi_A, \mathscr{H}^i_gA)
\end{equation*}
for all $g \in \bG^F$. Note that this characteristic function depends upon the choice of isomorphism $\phi_A$. If $A$ is an element of $\widehat{\bG}^F$ then we will choose $\phi_A$ to satisfy the properties in \cite[\S25.1]{lusztig:1986:character-sheaves-V}. With such a choice the resulting characteristic function $\chi_{A,\phi_A}$ has norm 1; the choice of $\phi_A$ is unique up to scalar multiplication by a root of unity.
\end{pa}

\begin{pa}\label{pa:F-stable-pairs}
The Frobenius endomorphism acts on the set $\mathcal{N}_{\bG}$ by $\iota \mapsto F^{-1}(\iota) := (F^{-1}(\mathcal{O}_{\iota}),F^*\mathscr{E}_{\iota})$, where $F^*\mathscr{E}_{\iota}$ is the inverse image of $\mathscr{E}_{\iota}$ under $F$. We say $\iota$ is $F$-stable if $F^{-1}(\mathcal{O}_{\iota}) = \mathcal{O}_{\iota}$ and $F^*\mathscr{E}_{\iota}$ is isomorphic to $\mathscr{E}_{\iota}$ (we also denote this by $\iota = F^{-1}(\iota)$); we denote the subset of all $F$-stable pairs by $\mathcal{N}_{\bG}^F$. The Frobenius also acts on the set $\widetilde{\mathcal{M}}_{\bG}$, hence also on $\mathcal{M}_{\bG}$, by $(\bL,\nu) \mapsto (F^{-1}(\bL), F^{-1}(\nu))$. We say $(\bL,\nu)$ is $F$-stable if $F^{-1}(\bL) = \bL$ and $F^{-1}(\nu) = \nu$ and we denote by $\widetilde{\mathcal{M}}_{\bG}^F$ the subset of all $F$-stable pairs (similarly for $\mathcal{M}_{\bG}^F$). Note that the map $\mathcal{N}_{\bG} \to \mathcal{M}_{\bG}$ is compatible with the actions of $F$ so that we have an induced map $\mathcal{N}_{\bG}^F \to \mathcal{M}_{\bG}^F$ (c.f.\ \cite[24.2]{lusztig:1986:character-sheaves-V}). Assume $(\bL,\bP) \in \mathcal{Z}_{\std}$ is such that $\mathcal{N}_{\bL}^0 \neq \emptyset$ then by the classification of cuspidal local systems (see \cite[\S10-15]{lusztig:1984:intersection-cohomology-complexes}) we have $F(\bL) = \bL$ and $F(\bP) = \bP$. In particular any orbit $[\bL,\nu] \in \mathcal{M}_{\bG}^F$ may be represented by an $F$-stable standard Levi subgroup.
\end{pa}

\subsection{Twisted Levi Subgroups}
\begin{pa}\label{lem:G-split-levi}
We now wish to choose for each $F$-stable character sheaf $A \in \widehat{\bG}^F$ satisfying $\supp(A)\cap\bG_{\uni} \neq \emptyset$ a distinguished isomorphism $\phi_A : F^*A \to A$. To do this we will adapt an idea of Lusztig from \cite[\S3.3 - \S3.4]{lusztig:1986:on-the-character-values} and \cite[\S10.3]{lusztig:1985:character-sheaves}. Firstly let us assume that $(\bL,\bP) \in \mathcal{Z}_{\std}$ is a standard pair such that $\mathcal{N}_{\bL}^0 \neq \emptyset$ and $A_0 = A_{\nu}^{\mathscr{L}} \in \widehat{\bL}$ is a cuspidal character sheaf satisfying $(A:\ind_{\bL}^{\bG}A_0) \neq 0$ (see \cref{pa:induction-char-sheaves,lem:F-action-ind}). Using the argument in \cite[\S10.5]{lusztig:1985:character-sheaves} together with \cref{lem:F-action-ind} we see that there exists $x \in \bG$ such that ${}^x\bL$ is $F$-stable and $B_0 = (\ad x^{-1})^*A_0 \in \widehat{{}^x\bL}$ is an $F$-stable cuspidal character sheaf satisfying $(A:\ind_{{}^x\bL}^{\bG}B_0) \neq 0$ (we may then assume that $A$ is a summand of $\ind_{{}^x\bL}^{\bG}B_0$).
\end{pa}

\begin{pa}\label{pa:conj-levi-etc}
Let us assume that $(\bL,\bP) \in \mathcal{Z}_{\std}$ is a standard pair such that $\nu = (\mathcal{O}_0,\mathscr{E}_0) \in \mathcal{N}_{\bL}^0$ and $\mathscr{L} \in \mathcal{S}(\bT_0)$ is a local system. For every $v \in W_{\bG}(\bL)$ we fix an element $g_v \in \bG$ such that $g_v^{-1}F(g_v) = F(\dot{v}^{-1})$ where $\dot{v} \in N_{\bG}(\bL)$ is as in \cref{pa:Lusztig-iso-A}. If $v$ is the identity in $W_{\bG}(\bL)$ then we will assume for convenience that $\dot{v} = g_v$ is the identity in $\bG$ (this clearly satisfies the assumption of \cref{pa:Lusztig-iso-A}). We now define
\begin{equation*}
\bL_v = {}^{g_v}\bL \qquad \bP_v = {}^{g_v}\bP \qquad \mathcal{O}_v = {}^{g_v}\mathcal{O}_0 \qquad \Sigma_v = \mathcal{O}_vZ^{\circ}(\bL_v) \qquad \mathscr{E}_v = (\ad g_v^{-1})^*\mathscr{E}_0 \qquad \mathscr{L}_v = (\ad g_v^{-1})^*\mathscr{L}.
\end{equation*}
Through the isomorphism $\ad g_v^{-1} : \bL_v \to \bL$ we may identify the action of the Frobenius endomorphism $F$ on $\bL_v$ with the action of $F_v$ on $\bL$ defined by $F_v(l) = F(\dot{v}^{-1}l\dot{v})$ for all $l \in \bL$. Clearly we have an isomorphism of abstract groups $\ad g_v^{-1} : \bL_v^F \to \bL^{F_v}$ and $(\ad g_v^{-1})^*$ induces a bijective correspondence between $\widehat{\bL}^{F_v}$ and $\widehat{\bL}_v^F$. In particular, assume $B \in \widehat{\bL}^{F_v}$ and $\psi : F_v^*B \to B$ is a fixed isomorphism then $\psi' : F^*B' \to B'$ is also an isomorphism where $\psi' = (\ad g_v^{-1})^*\psi$ and $B' = (\ad g_v^{-1})^*B$. From the definitions we have the corresponding characteristic functions are related by $\chi_{B',\psi'}= \chi_{B,\psi} \circ \ad g_v^{-1}$.


Let us denote by $D_v$ the complex $\IC(\overline{\Sigma}_v,\mathscr{E}_v\boxtimes\mathscr{L}_v)[\dim \Sigma_v]$ (see \cref{eq:cusp-uni-sup}). From the discussion in \cref{lem:G-split-levi} we need only concern ourselves with summands of $\ind_{\bL_v}^{\bG}(D_v)$ where $D_v$ is $F$-stable. Clearly we have
\begin{equation*}
F^*D_v = \IC(\overline{F^{-1}(\Sigma_v)},F^*\mathscr{E}_v\boxtimes F^*\mathscr{L}_v)[\dim \Sigma_v]
\end{equation*}
hence we may, and will, assume that $\nu \in \mathcal{N}_{\bL}^F$ and $F^*\mathscr{L}_v \cong \mathscr{L}_v$ (see \cite[Theorem 9.2(b)]{lusztig:1984:intersection-cohomology-complexes}). As we have an equivalence $F^*\mathscr{L}_v \cong \mathscr{L}_v \Leftrightarrow F_v^*\mathscr{L} \cong \mathscr{L}$ we see that the existence of an isomorphism $F^*\mathscr{L}_v \cong \mathscr{L}_v$ is equivalent to $v^{-1}$ being contained in the subset
\begin{equation}\label{eq:F-coset-loc-sys}
Z_{\bG}(\bL,\mathscr{L}) = \{n \in N_{\bG}(\bL) \mid \ad(n)^*F^*\mathscr{L} \cong \mathscr{L}\}/\bL \subset W_{\bG}(\bL).
\end{equation}
In particular we will assume that $Z_{\bG}(\bL,\mathscr{L})$ is non-empty. Note that $Z_{\bG}(\bL,\mathscr{L})x = Z_{\bG}(\bL,\mathscr{L})$ for any $x \in W_{\bG}(\bL,\mathscr{L})$, hence $Z_{\bG}(\bL,\mathscr{L})$ is a union of right cosets of $W_{\bG}(\bL,\mathscr{L})$ in $W_{\bG}(\bL)$.

If $\bS \leqslant \bG$ is any $F$-stable torus and $\mathscr{F} \in \mathcal{S}(\bS)^F$ is any $F$-stable local system then we may choose a distinguished isomorphism $\beta : F^*\mathscr{F} \to \mathscr{F}$ by the condition that the morphism induced by $\beta$ at the stalk of the identity element $1 \in \bS^F$ is the identity morphism. In particular this defines an isomorphism $\varphi_1^v : F^*\mathscr{L}_v \to \mathscr{L}_v$ for each $v^{-1} \in Z_{\bG}(\bL,\mathscr{L})$.
\end{pa}


\subsection{Relative Weyl Groups}
\begin{pa}\label{pa:generators-rel-Weyl}
For the following we refer to \cite[Proposition 1.12]{bonnafe:2004:actions-of-rel-Weyl-grps-I} and the references therein (in particular \cite{howlett:1980:normalizers-of-parabolic-subgroups}). Let us denote by $\Phi_{\bL} \subseteq \Phi_{\bG}$ the root system of $\bL$ with respect to $\bT_0$. We will denote by $V_{\bG}$ the $\mathbb{R}$-subspace of $\mathbb{R} \otimes_{\mathbb{Z}} X(\bT_0)$ spanned by $\Phi_{\bG}$ and by $V_{\bL} \subseteq V_{\bG}$ the $\mathbb{R}$-subspace spanned by $\Phi_{\bL}$. Let $\Delta_{\bL} \subset \Delta_{\bG}$ be the set of simple roots of $\bL$ with respect to $\bT_0\leqslant \bB_0 \cap\bL$ then we denote by $\overline{\Delta}_{\bL}$ the set $\Delta_{\bG} - \Delta_{\bL}$. For each $\alpha \in \overline{\Delta}_{\bL}$ let $\bM_{\alpha} \in \mathcal{L}_{\std}$ be the standard Levi subgroup of $\bG$ whose simple roots are $\Delta_{\bL}\cup\{\alpha\}$ with respect to $\bT_0\leqslant\bB_0\cap\bM_{\alpha}$ then the group $W_{\bM_{\alpha}}(\bL)$ contains a unique non-trivial element which we denote by $s_{\bL,\alpha}$. For every $\alpha \in \overline{\Delta}_{\bL}$ we have $s_{\bL,\alpha}$ acts on the quotient space $\overline{V}_{\bL} = V_{\bG}/V_{\bL}$ as a reflection sending $\alpha + V_{\bL}$ to $-\alpha + V_{\bL}$. We will also use $\overline{\Delta}_{\bL}$ to denote the image $\{\alpha + V_{\bL} \mid \alpha \in \widetilde{\Delta}_{\bL}\}$ of $\overline{\Delta}_{\bL}$ in $\overline{V}_{\bL}$. Taking $\mathbb{I} = \{s_{\bL,\alpha} \mid \alpha \in \overline{\Delta}_{\bL}\}$ we have $(W_{\bG}(\bL),\mathbb{I})$ is a Coxeter system with corresponding root system $\overline{\Phi}_{\bL} \subset \overline{V}_{\bL}$ obtained as the image of $\overline{\Delta}_{\bL}$ under the action of $W_{\bG}(\bL)$. Note that if $\overline{\Phi}_{\bL}^+\subset\overline{\Phi}_{\bL}$ is the positive system of roots determined by $\overline{\Delta}_{\bL}$ then we have $\overline{\Phi}_{\bL}^+ = \overline{\Phi}_{\bL} \cap \{\alpha + V_{\bL} \mid \alpha \in \Phi^+\}$.

For any root $\alpha \in \overline{\Phi}_{\bL}$ we denote by $n_{\alpha} \in N_{\bG}(\bL)$ a representative for the reflection $s_{\alpha} \in W_{\bG}(\bL)$ of $\alpha$. We now consider the following subgroup
\begin{equation*}
R_{\bG}(\bL,\mathscr{L}) = \langle s_{\alpha} \in W_{\bG}(\bL) \mid \alpha \in \overline{\Phi}_{\bL}\text{ and }\ad(n_{\alpha})^*\mathscr{L} \cong \mathscr{L}\rangle
\end{equation*}
of $W_{\bG}(\bL,\mathscr{L})$. Clearly this is a reflection subgroup of $W_{\bG}(\bL)$ and the defining condition does not depend upon the choice of representative $n_{\alpha}$. In particular there exists a root system $\Psi \subset \overline{\Phi}_{\bL}$ such that $R_{\bG}(\bL,\mathscr{L})$ is the reflection group of $\Psi$. Taking $\Psi^+ = \Psi\cap\overline{\Phi}_{\bL}^+$ we have $\Psi^+$ is a system of positive roots in $\Psi$ which determines a unique set of Coxeter generators $\mathbb{J} \subseteq R_{\bG}(\bL,\mathscr{L})$.
\end{pa}

%

\begin{pa}\label{pa:min-length-elm}
Assume $n \in N_{\bG}(\bL,\mathscr{L})$ then for any $v^{-1} \in Z_{\bG}(\bL,\mathscr{L})$ we have an isomorphism
\begin{equation*}
(\ad F_v^{-1}(n))^*\mathscr{L}\cong (\ad F_v^{-1}(n))^*(F_v)^*\mathscr{L} = F_v^*(\ad n)^*\mathscr{L} \cong F_v^*\mathscr{L} \cong \mathscr{L},
\end{equation*}
in particular as $\bL$ is $F_v$-stable this shows that $F_v$ induces an automorphism $F_v : W_{\bG}(\bL,\mathscr{L}) \to W_{\bG}(\bL,\mathscr{L})$. Assume $w^{-1} \in Z_{\bG}(\bL,\mathscr{L})$ is the unique element of minimal length in its right coset $w^{-1}R_{\bG}(\bL,\mathscr{L})$ with respect to the length function of $(W_{\bG}(\bL),\mathbb{I})$ (see \cite[Lemma 1.9(i)]{lusztig:1984:characters-of-reductive-groups}). This element is uniquely determined in its coset by the condition that $w^{-1}\cdot\Psi^+ \subset \Phi^+$. In particular, for such an element, $F_w$ is an automorphism of $W_{\bG}(\bL,\mathscr{L})$ which restricts to an automorphism of the Coxeter system $(R_{\bG}(\bL,\mathscr{L}),\mathbb{J})$. Taking $R_{\bG}(\bL_w,\mathscr{L}_w)$ (resp.\ $\mathbb{J}_w$) to be the image of $R_{\bG}(\bL,\mathscr{L})$ (resp.\ $\mathbb{J}$) under $\ad g_w$ we have $(R_{\bG}(\bL_w,\mathscr{L}_w),\mathbb{J}_w)$ is a Coxeter system and $F$ induces an automorphism of this Coxeter system. Note that this automorphism depends upon the choice of the element $w \in Z_{\bG}(\bL,\mathscr{L})$.
\end{pa}

\begin{rem}
Assume $Z(\bG)$ is connected and $\bG/Z(\bG)$ is simple then by \cite[I - 5.16.1, II - 4.2]{shoji:1995:character-sheaves-and-almost-characters} and  \cite[8.5.13]{lusztig:1984:characters-of-reductive-groups} we have $W_{\bG}(\bL,\mathscr{L}) = R_{\bG}(\bL,\mathscr{L})$ is a reflection subgroup of $W_{\bG}(\bL)$. Furthermore $Z_{\bG}(\bL,\mathscr{L})$ is a single right coset of $W_{\bG}(\bL,\mathscr{L})$ in $W_{\bG}(\bL)$, hence the element $w$ chosen above is unique. It is well known that if $Z(\bG)$ is disconnected then these statements do not hold (for instance when $\bL$ is a torus).
\end{rem}


\subsection{Isomorphisms for Local Systems}
\begin{pa}\label{pa:isomorphism-cuspidal-case}
We now consider how to choose an isomorphism $F^*\mathscr{E}_v \to \mathscr{E}_v$ for any $v \in W_{\bG}(\bL)$. First we will choose an isomorphism $\varphi_0 : F^*\mathscr{E}_0 \to \mathscr{E}_0$ such that the induced isomorphism $(\mathscr{E}_0)_u \to (\mathscr{E}_0)_u$ at the stalk of any element $u \in \mathcal{O}_0^F$ is $q^{(\dim(\bL/Z^{\circ}(\bL)) - \dim\mathcal{O}_0)/2}$ times a map of finite order. Following \cite[9.3]{lusztig:1990:green-functions-and-character-sheaves} we define an isomorphism $\varphi_0^v : F^*\mathscr{E}_v \to \mathscr{E}_v$ in the following way. Let $K$ be the image of $(\bL,\mathcal{O}_0,\mathscr{E}_0,\mathscr{L})$ under the map in \cref{eq:ind-w-P-construction} and let $\mathcal{A}$ be its endomorphism algebra (c.f.\ \cref{pa:endomorphism-algebra-A}). The basis element $\theta_{F(v)}$ of $\mathcal{A}$ defines an isomorphism $\theta_{F(v)} : \mathscr{E}_0 \to (\ad F(\dot{v}))^*\mathscr{E}_0$. Taking the inverse image of $\theta_{F(v)}$ along $\ad g_v^{-1}\circ F$ we see that we obtain a new isomorphism
\begin{equation*}
(\ad g_v^{-1}\circ F)^*\theta_{F(v)} : F^*\mathscr{E}_v \to (\ad g_v^{-1})^*F^*\mathscr{E}_0.
\end{equation*}
Moreover the isomorphism $\varphi_0$ induces an isomorphism
\begin{equation*}
(\ad g_v^{-1})^*\varphi_0 : (\ad g_v^{-1})^*F^*\mathscr{E}_0 \to \mathscr{E}_v.
\end{equation*}
We will now take $\varphi_0^v$ to be the composition $(\ad g_v^{-1})^*\varphi_0\circ(\ad g_v^{-1}\circ F)^*\theta_{F(v)}$ (note that by our assumption on $g_v$ we have $\varphi_0^v = \varphi_0$ if $v=1$). If $v^{-1} \in Z_{\bG}(\bL,\mathscr{L})$ then we can define a unique isomorphism $\phi_{D_v} : F^*D_v \to D_v$ with the property that the restriction of $\phi_{D_v}$ to $\Sigma_v$ coincides with $\varphi_0^v\boxtimes\varphi_1^v$ (c.f.\ \cref{pa:conj-levi-etc}).
\end{pa}


\begin{rem}
Note that the integer $\dim(\bL/Z^{\circ}(\bL)) - \dim\mathcal{O}_0$ is not even in general. Hence our construction above depends upon a choice of $q^{1/2}$ in $\Ql$. However, if $Z(\bG)$ is connected and $p$ is a good prime then this is an integer by the classification of cuspidal pairs (see \cite{lusztig:1984:intersection-cohomology-complexes}).
\end{rem}

\begin{rem}\label{rem:scaled}
Here, we have not quite followed the setup of \cite{lusztig:1986:character-sheaves-V} because we have scaled the map of finite order, chosen in \cite[24.2]{lusztig:1986:character-sheaves-V}, by a power of $q$. When following \cite{lusztig:1986:character-sheaves-V} we must then adjust all terms by this scalar, which the reader may check that we do.
\end{rem}

\subsection{Isomorphisms for Induced Complexes}
\begin{pa}\label{pa:iso-K-v}
Assume $v^{-1} \in Z_{\bG}(\bL,\mathscr{L})$ and let us denote by $K_v$ the image of $(\bL_v,\mathcal{O}_v,\mathscr{E}_v,\mathscr{L}_v)$ under the map in \cref{eq:ind-w-P-construction}. We now consider how to choose an isomorphism $F^*K_v \to K_v$. Recalling the construction in \cref{pa:unip-sup-char-sheaves} we see that the isomorphisms $\varphi_0^v : F^*\mathscr{E}_v \to \mathscr{E}_v$ and $\varphi_1^v : F^*\mathscr{L}_v \to \mathscr{L}_v$ naturally induce isomorphisms $\tilde{\varphi}_0^v : F^*\tilde{\mathscr{E}}_v \to \tilde{\mathscr{E}}_v$ and $\tilde{\varphi}_1^v : F^*\tilde{\mathscr{L}}_v \to \tilde{\mathscr{L}}_v$ hence an isomorphism $\tilde{\varphi}_0^v\otimes\tilde{\varphi}_1^v : F^*\tilde{\mathscr{E}}_v\otimes F^*\tilde{\mathscr{L}}_v \to \tilde{\mathscr{E}}_v\otimes\tilde{\mathscr{L}}_v$. Clearly the variety $Y$ of \cref{pa:unip-sup-char-sheaves} is stable under $F$ (because $\bL_v$ is stable under $F$) so we have
\begin{equation*}
F^*K_v = \IC(\overline{Y},F^*\gamma_*(\tilde{\mathscr{E}}_v\otimes\tilde{\mathscr{L}}_v))[\dim Y] = \IC(\overline{Y},\gamma_*F^*(\tilde{\mathscr{E}}_v\otimes\tilde{\mathscr{L}}_v))[\dim Y]
\end{equation*}
because $\gamma_* = \gamma_!$ and $F^*\gamma_! = \gamma_!F^*$ (see \cref{pa:unip-sup-char-sheaves} and \cref{pa:direct-image-commutative}). We may now define a unique isomorphism $\phi : F^*K_v \to K_v$ by specifying that $\phi|_Y$ coincides with $\gamma_*(\tilde{\varphi}_0^v\otimes\tilde{\varphi}_1^v)$.
\end{pa}

\subsection{Isomorphisms for Character Sheaves}
\begin{pa}\label{pa:iso-on-arbitrary-char-sheaf}
Let $K_v$ and $\phi : F^*K_v \to K_v$ be as in \cref{pa:iso-K-v}. By \cref{prop:K-nu-L-iso-to-induction} we may assume that any summand $A$ of $\ind_{\bL_v}^{\bG}(D_v)$ is a summand of $K_v \cong \ind_{\bL_v}^{\bG}(D_v)$. In particular, if $\mathcal{A}_v$ is the endomorphism algebra of $K_v$ then $A = K_{v,E} = (K_v)_E$ for some simple $\mathcal{A}_v$-module $E$ (c.f.\ \cref{pa:decomp-by-end-algebra}). By adapting the construction in \cite[\S10.3]{lusztig:1985:character-sheaves} we will show how $\phi_A$ is determined from $\phi$. Let us denote by $\sigma : \mathcal{A}_v \to \mathcal{A}_v$ the algebra automorphism given by $\sigma(\theta) = \phi \circ F^*\theta \circ \phi^{-1}$ where $F^*\theta : F^*K_v \to F^*K_v$ is the induced map. For any $\mathcal{A}_v$-module $E$ we denote by $E_{\sigma}$ the module obtained from $E$ by twisting the action with $\sigma^{-1}$ (c.f.\ \cref{pa:autos-of-fin-groups}). For each such $E$ we then have an induced isomorphism of $\mathcal{A}_v$-modules
\begin{equation*}
F^*A = \Hom_{\mathcal{A}}(E,F^*K_v) \to \Hom_{\mathcal{A}}(E_{\sigma},K_v)
\end{equation*}
given by $f \mapsto \phi\circ f$. Here $F^*K_v$ is an $\mathcal{A}$-module under the action $\theta\cdot k = (F^*\theta)(k)$ for all $k \in F^*K_v$ and the first equality is seen to hold by the construction given in \cref{eq:diag-Hom-MG}.

Assume we have an $\mathcal{A}_v$-module isomorphism $\psi_E : E \to E_{\sigma}$ then $\phi_A : F^*K_{v,E} \to K_{v,E}$, given by $\phi_A(f) = \phi\circ f \circ \psi_E$, is an isomorphism of $\mathcal{A}_v$-modules which defines an isomorphism in $\mathscr{D}\bG$ under the construction in \cref{eq:diag-Hom-MG}. We now need only observe that when $K_{v,E}$ is simple, hence $E$ is simple, all such isomorphisms occur in this way. To see this note that any non-zero $f \in \Hom_{\mathcal{A}_v}(E,K_v)$ gives an isomorphism $E \cong \Image(f)$, because $E$ is a simple module, so $\psi_E = f^{-1}\circ\phi^{-1}\circ\phi_A\circ f$ is determined by $\phi_A$. This discussion shows that choosing the isomorphism $\phi_A$ is equivalent to choosing the isomorphism $\psi_E$.

Let us recall Lusztig's basis $\{\Theta_y \mid y \in W_{\bG}(\bL_v,\mathscr{L}_v)\}$ for the endomorphism algebra $\mathcal{A}_v$ (c.f.\ \cref{pa:Lusztig-iso-A}). From their definition one may readily check that the basis elements of $\mathcal{A}_v$ satisfy $\sigma(\Theta_y) = \Theta_{F^{-1}(y)}$ for all $y \in W_{\bG}(\bL_v,\mathscr{L}_v)$. Let $\widetilde{W}_{\bG}(\bL_v,\mathscr{L}_v)$ denote the semidirect product $W_{\bG}(\bL_v,\mathscr{L}_v)\rtimes \langle F\rangle$ where $\langle F\rangle$ is the finite cyclic group generated by the automorphism $F$. We can extend $E$ to a $\widetilde{W}_{\bG}(\bL_v,\mathscr{L}_v)$-module $\widetilde{E}$ by letting $F^{-1}$ act as $\psi_E$ under the isomorphism in \cref{eq:end-grp-alg-iso}. In this way we see that choosing $\phi_A$ is equivalent to choosing an extension $\widetilde{E} \in \Irr(\widetilde{W}_{\bG}(\bL_v,\mathscr{L}_v))$ of $E$.
\end{pa}

\begin{rem}\label{rem:preferred-extension}
Assume $\bW$ is a finite Weyl group, $(\bW, \mathbb{T})$ is a finite Coxeter system and $\phi : \bW \to \bW$ is an automorphism stabilising $\mathbb{T}$. For any $\phi$-stable irreducible character $\chi \in \Irr(\bW)^{\phi}$ Lusztig has systematically defined a non-canonical extension of $\chi$ to the semidirect product $\widetilde{\bW} \rtimes \langle \phi\rangle$ (see \cite[17.2]{lusztig:1985:character-sheaves}). Throughout this article we will refer to this extension as \emph{Lusztig's preferred extension} or simply the \emph{preferred extension}.
\end{rem}

\begin{assumption}
Assume now that $v = w$, where $w^{-1} \in Z_{\bG}(\bL,\mathscr{L})$ is as in \cref{pa:min-length-elm}, and $R_{\bG}(\bL_w,\mathscr{L}_w) = W_{\bG}(\bL_w,\mathscr{L}_w)$ is a Coxeter group then $F$ is an automorphism of the Coxeter system $(W_{\bG}(\bL_w,\mathscr{L}_w), \mathbb{J}_w)$ (c.f.\ \cref{pa:generators-rel-Weyl}). In this situation we will assume that $\psi_E$ is chosen such that $\widetilde{E}$ is Lusztig's preferred extension of $E$.
\end{assumption}

\subsection{Passing from Twisted to Split Levis on the Trivial Local System}
\begin{pa}\label{pa:iso-K-v-compare}
Assume, only for this section, that $\mathscr{L} = \Ql$. Let $K_v$ be the image of $(\bL_v,\mathcal{O}_v,\mathscr{E}_v,\mathscr{L}_v)$ under the map in \cref{eq:ind-w-P-construction} and let $\psi^v : F^*K_v \to K_v$ be the isomorphism defined in \cref{pa:iso-K-v}. We will denote by $K$ the image of $(\bL,\mathcal{O}_0,\mathscr{E}_0,\mathscr{L})$ under the map in \cref{eq:ind-w-P-construction}. We wish to define an isomorphism $\phi^v : F^*K \to K$ which is related (through $\Theta_{F(v)}$) to the isomorphism $\psi^v$. Recall that in \cref{pa:isomorphism-cuspidal-case,pa:iso-K-v} we have defined isomorphisms $\phi : F^*K\to K$, $\varphi_0 : F^*\mathscr{E}_0 \to \mathscr{E}_0$, $\varphi_1 : F^*\mathscr{L} \to \mathscr{L}$ and $\tilde{\varphi}_0 : F^*\tilde{\mathscr{E}}_0 \to \tilde{\mathscr{E}}_0$ (simply take $v=1$ in the constructions). With this we obtain isomorphisms
\begin{gather*}
\varphi_{v,0} := \theta_{F(v)}(\varphi_0\boxtimes\varphi_1) : F^*(\mathscr{E}_0\boxtimes\Ql) \to (\ad \dot{v})^*(\mathscr{E}_0\boxtimes
\Ql)\\
\tilde{\varphi}_{v,0} := \tilde{\theta}_{F(v)}\tilde{\varphi}_0 : F^*\tilde{\mathscr{E}}_0 \to \gamma_v^*\tilde{\mathscr{E}}_0
\end{gather*}
by composing with the endomorphism defined in \cref{pa:Lusztig-iso-A}. As in \cref{pa:iso-K-v} we define the isomorphism $\phi^v$ to be the unique extension of the isomorphism $\gamma_*\tilde{\varphi}_{v,0}$. It is clear from the construction that we have $\phi^v = \Theta_{F(v)}\phi$ and furthermore using the exact same argument as in \cite[(10.6.1)]{lusztig:1985:character-sheaves} we have for each $g \in G$ and $i \in \mathbb{Z}$ that
\begin{equation}\label{eq:trace-comparison}
\Tr(\phi^v,\mathscr{H}_g^iK) = \Tr(\Theta_{F(v)}\phi,\mathscr{H}_g^iK) = \Tr(\psi^v,\mathscr{H}_g^iK_v).
\end{equation}

Assume now that $K_E$ is the $F$-stable summand of $K$ parameterised by the irreducible character $E \in \Irr(W_{\bG}(\bL))^F$ (c.f.\ \cref{pa:endomorphism-algebra-A}). By \cref{pa:iso-on-arbitrary-char-sheaf} we have an isomorphism $\phi_E : F^*K_E \to K_E$ defined by $\phi$ and the choice of extension $\widetilde{E}$. Replacing $\phi$ by $\phi^v$ in the construction of \cref{pa:iso-on-arbitrary-char-sheaf} we see that the isomorphism $\phi_E$ is replaced by $\phi_E^v = \Theta_{F(v)}\phi_E$. Note that we have a bijection
\begin{equation}\label{eq:bijection-irr-W_G(L)}
\Irr(W_{\bG}(\bL))^F = \Irr(W_{\bG}(\bL))^{F_v} \to \Irr(W_{\bG}(\bL_v))^F,
\end{equation}
induced by the isomorphism $\ad g_v$. We will denote by $K_{v,E}$ the summand of $K_v$ which is parameterised by $E' \in \Irr(W_{\bG}(\bL_v))^F$ corresponding to $E$ under \cref{eq:bijection-irr-W_G(L)}. By \cref{pa:iso-on-arbitrary-char-sheaf} we have an isomorphism $\psi_E^v : F^*K_{v,E} \to K_{v,E}$ whose definition depends upon $\psi^v$ and an extension $\widetilde{E}'$ of $E'$. We may, and will, assume that the restriction of $\widetilde{E}'$ to the coset $W_{\bG}(\bL_v).F$ coincides with the restriction of $\widetilde{E}$ to the coset $W_{\bG}(\bL).F$ under the correspondence in \cref{pa:conventions-coset-identification}. Arranging things in this way, and using \cref{eq:trace-comparison}, we then have for each $g \in G$ and $i \in \mathbb{Z}$ that
\begin{equation}\label{eq:trace-comparison-iots}
\Tr(\phi_E^v,\mathscr{H}_g^iK_E) = \Tr(\Theta_{F(v)}\phi_E,\mathscr{H}_g^iK_{\iota}) = \Tr(\psi_E^v,\mathscr{H}_g^iK_{v,E}).
\end{equation}
\end{pa}

\subsection{Bases of the Space of Unipotently Supported Class Functions}
\begin{pa}\label{pa:a-and-b-values}
Before we continue we define here two integer values associated to a pair $\iota \in \mathcal{N}_{\bG}$. Recall that $[\bL_{\iota},\nu_{\iota}] \in \mathcal{M}_{\bG}$ is the orbit such that $\iota \in \mathscr{I}[\bL_{\iota},\nu_{\iota}]$ (c.f.\ \cref{pa:gen-spring-cor}). Let us denote $\nu_{\iota}$ by $(\mathcal{O}_0,\mathscr{E}_0)$ then we attach to $\iota$ the integers
\begin{align*}
a_{\iota} &= -\dim\mathcal{O}_{\iota} - \dim Z^{\circ}(\bL_{\iota}),\\
b_{\iota} &= (\dim\bG - \dim\mathcal{O}_{\iota}) - (\dim\bL_{\iota} - \dim\mathcal{O}_0).
\end{align*}
We now consider how the above isomorphisms determine a canonical isomorphism $F^*\mathscr{E}_{\iota} \to \mathscr{E}_{\iota}$ for an arbitrary pair $\iota = (\mathcal{O}_{\iota},\mathscr{E}_{\iota}) \in \mathcal{N}_{\bG}^F$. Recall from \cite[Theorem 6.5(c)]{lusztig:1984:intersection-cohomology-complexes} that the complex $K_{\iota}$ (c.f.\ \cref{pa:endomorphism-algebra-A}) is such that $\mathscr{H}^{a_{\iota}}(K_{\iota})|_{\mathcal{O}_{\iota}} \cong \mathscr{E}_{\iota}$. Assume $\iota$ corresponds to $E \in \Irr(W_{\bG}(\bL_{\iota}))^F$ under the generalised Springer correspondence. In \cref{pa:iso-K-v-compare} we have defined an isomorphism $\phi_E^v : F^*K_E \to K_E$ (for each $v \in W_{\bG}(\bL_{\iota})$) which naturally induces an isomorphism $\phi_{\iota}^v : F^*K_{\iota} \to K_{\iota}$ by the definition of $K_{\iota}$. We similarly obtain an induced isomorphism $F^*\mathscr{H}^{a_{\iota}}(K_{\iota})|_{\mathcal{O}_{\iota}} \to \mathscr{H}^{a_{\iota}}(K_{\iota})|_{\mathcal{O}_{\iota}}$ hence an isomorphism $\varphi_{\iota}^v : F^*\mathscr{E}_{\iota} \to \mathscr{E}_{\iota}$. Setting $v=1$ gives the required canonical isomorphism. Following \cite[24.2.2]{lusztig:1986:character-sheaves-V} we define, for every $v \in W_{\bG}(\bL_{\iota})$, an isomorphism $\psi_{\iota}^v$ by the condition that $\varphi_{\iota}^v = q^{(\dim\bG+a_{\iota})/2}\psi_{\iota}^v$. Note that we are using \cref{rem:scaled} and the fact that $b_{\iota} = a_{\iota} + \dim\supp K_{\iota}$. This latter statement follows from the fact that $\dim\supp K_{\iota} = \dim Y$ where $Y$ is as in \cref{pa:K-nu-L-ind-A-nu-L}.
\end{pa}

\begin{pa}\label{pa:X-and-Y}
Let us assume that to every element $\iota \in \mathcal{N}_{\bG}$ we have associated an element $v_{\iota} \in W_{\bG}(\bL_{\iota})$ such that this assignment is constant on blocks (i.e.\ $v_{\iota} = v_{\iota'}$ if $\iota$ and $\iota'$ are contained in the same block of $\mathcal{N}_{\bG}$). Following \cite[\S24.2]{lusztig:1986:character-sheaves-V} we associate to each $F$-stable pair $\iota \in \mathcal{N}_{\bG}^F$ a unipotently supported class function of $G$ by setting
\begin{align*}\label{eq:def-X_i}
X_{\iota}^v(g) &= (-1)^{a_{\iota}}q^{-(\dim\bG + a_{\iota})/2}\chi_{K_{\iota},\phi_{\iota}^v}(g),
\intertext{for all $g \in G$ (where $\phi_{\iota}^v$ is as in \cref{pa:iso-K-v-compare}). Here we simply write $v$ for $v_{\iota}$ with the meaning understood from the context. Furthermore for each $\iota \in \mathcal{N}_{\bG}^F$ we define a second unipotently supported class function of $G$ by setting}
Y_{\iota}^v(g) &= \begin{cases}
\Tr(\psi_{\iota}^v,(\mathscr{E}_{\iota})_g) &\text{if }g \in \mathcal{O}_{\iota}^F,\\
0 &\text{otherwise},
\end{cases}
\end{align*}
for all $g \in G$ (where $\psi_{\iota}^v$ is as in \cref{pa:a-and-b-values}). The sets $\mathcal{Y}^* = \{Y_{\iota}^v \mid \iota \in \mathcal{N}_{\bG}^F\}$ and $\mathcal{X}^* = \{X_{\iota}^v \mid \iota \in \mathcal{N}_{\bG}^F\}$ are bases for the subspace $\Centu{G}$ of unipotently supported class functions of $G$ (see \cite[24.2.7]{lusztig:1986:character-sheaves-V}). In particular for each $\iota$, $\iota' \in \mathcal{N}_{\bG}^F$ there exists an element $P_{\iota',\iota} \in \overline{\mathbb{Q}}_{\ell}$ such that
\begin{equation*}
X_{\iota}^v = \sum_{\iota' \in \mathcal{N}_{\bG}^F} P_{\iota',\iota}Y_{\iota'}^v.
\end{equation*}
The matrix $(P_{\iota',\iota})_{\iota',\iota \in \mathcal{N}_{\bG}^F}$ is precisely the matrix considered in \cite[\S24]{lusztig:1986:character-sheaves-V} and is computable by the algorithm described in the proof of \cite[Theorem 24.4]{lusztig:1986:character-sheaves-V}. We recall that the following property of the coefficients $P_{\iota',\iota}$ holds
\begin{equation}\label{eq:P-properties}
P_{\iota'\iota} = 0 \text{ if }\mathscr{I}[\bL_{\iota},\nu_{\iota}] \neq \mathscr{I}[\bL_{\iota'},\nu_{\iota'}].
\end{equation}
\end{pa}

\begin{assumption}
From now on the following assumption is in place. If $A \in \widehat{\bG}^F$ is a character sheaf satisfying $\supp(A)\cap\bG_{\uni} \neq \emptyset$ then we assume the isomorphism $\phi_A : F^*A \to A$ to be chosen as in \cref{pa:iso-on-arbitrary-char-sheaf}. If $\iota \in \mathcal{N}_{\bG}^F$ is any $F$-stable pair and $v \in W_{\bG}(\bL_{\iota})$ then we assume the isomorphism $\phi_{\iota}^v : F^*K_{\iota} \to K_{\iota}$ to be as in \cref{pa:a-and-b-values}.
\end{assumption}

\section{Restricting Character Sheaves to the Unipotent Variety}\label{sec:restricting-to-unip-variety}
\begin{pa}
Let us carry forward the setup of the previous section. We will denote by $K_w^{\mathscr{L}}$ the image of $(\bL_w,\mathcal{O}_w,\mathscr{E}_w,\mathscr{L}_w)$ under the map in \cref{eq:ind-w-P-construction}, where $w \in W_{\bG}(\bL)$ is as in \cref{pa:min-length-elm}. Furthermore, we assume $A = (K_w^{\mathscr{L}})_E = K_{w,E}^{\mathscr{L}}$ is the summand of $K_w^{\mathscr{L}}$ parameterised by $E \in \Irr(W_{\bG}(\bL_w))^F$ (c.f.\ \cref{pa:decomp-by-end-algebra}). Setting $\mathscr{L} = \Ql$ we obtain a new complex which we denote simply by $K_w$. If $f \in \Cent(G)$ is a class function then we denote by $f|_{G_{\uni}} \in \Centu{G}$ the extension by 0 of the restriction to $G_{\uni}$. Recall from \cref{pa:X-and-Y} that $\mathcal{X}^*$ is a basis of $\Centu{G}$ then we have
\begin{equation*}
\chi_{A,\phi_A}|_{G_{\uni}} = \sum_{\iota \in \mathcal{N}_{\bG}^F} m(A,\iota,\phi_{\iota}^v)\chi_{K_{\iota},\phi_{\iota}^v}
\end{equation*}
for some coefficients $m(A,\iota,\phi_{\iota}^v) \in \Ql$. We will assume that the element $v_{\iota}$ chosen in \cref{pa:X-and-Y} is $w$ when $\iota$ is contained in the block $\mathscr{I}[\bL,\nu]$ ($\nu = (\mathcal{O}_0,\mathscr{E}_0)$). It is the purpose of this section to describe the coefficients $m(A,\iota,\phi_{\iota}^v)$. We will do this following the method in \cite{lusztig:1986:on-the-character-values}, in particular we will now recall the sequence of isomorphisms constructed in \cite[\S2.6]{lusztig:1986:on-the-character-values}.
\end{pa}

\begin{pa}\label{pa:d_w-iso}
Assume $(\bL_w,\bQ) \in \mathcal{Z}$ then we will denote by $\ind_{\bL_w \subseteq \bQ}^{\bG}(A_{\mathscr{L}})$ the image of $(\bL_w,\bQ,\mathcal{O}_w,\mathscr{E}_w,\mathscr{L}_w)$ under the map in \cref{eq:ind-P-construction}. Let $D$ be the similarly named complex defined in \cref{pa:definition-of-induction} then by the discussion in \cref{pa:K-nu-L-ind-A-nu-L} we have $D = \IC(\tilde{X}',\overline{\mathscr{E}_w\boxtimes\mathscr{L}_w})[\dim \tilde{X}']$. Setting $\mathscr{L} = \Ql$ we obtain new complexes which we respectively denote by $D_0$ and $\ind_{\bL_w \subseteq \bQ}^{\bG}(A_0)$. Recall the notation of \cref{pa:K-nu-L-ind-A-nu-L} and let $i : \overline{Y}_{\uni} \hookrightarrow \overline{Y}$ be the natural inclusion of the unipotent elements contained in $\overline{Y}$. We have a commutative diagram
\begin{center}
\begin{tikzcd}
\overline{\mathcal{O}}_0 \arrow{d}[swap]{i} & \hat{X}_{\uni}' \arrow{l}[swap]{\pi}\arrow{d}[swap]{i}\arrow{r}{\sigma} & \tilde{X}_{\uni}' \arrow{d}[swap]{i}\arrow{r}{\tau} & \overline{Y}_{\uni} \arrow{d}[swap]{i}\\
\overline{\Sigma} & \hat{X}' \arrow{l}[swap]{\pi} \arrow{r}{\sigma} & \tilde{X}' \arrow{r}{\tau} & \overline{Y}
\end{tikzcd}
\end{center}
where $\tilde{X}_{\uni}' = \tau^{-1}(\overline{Y}_{\uni})$ and $\hat{X}_{\uni}' = \sigma^{-1}(\tilde{X}_{\uni}')$. For $\tilde{X}_{\uni}'$ and $\hat{X}_{\uni}'$ we have the action of $i$ is given by the natural action on the first factor. We will denote by $\tilde{Z}_{\uni}' \subset \tilde{X}_{\uni}'$ the subvariety given by $\{(g,h\bQ) \in \bG_{\uni} \times \bG/\bQ \mid h^{-1}gh \in \Sigma\bU_{\bQ}\}$ and by $\tilde{\imath} : \tilde{Z}_{\uni}' \hookrightarrow \tilde{X}_{\uni}'$ the inclusion map. It is clear that we have an isomorphism $\psi : i^*(\mathscr{E}_0\boxtimes\mathscr{L}) \to i^*(\mathscr{E}_0\boxtimes\Ql)$ of local systems and as $\pi\circ\tilde{\imath}\circ j$ (c.f.\ \cref{pa:K-nu-L-ind-A-nu-L}) commutes with $i$ we have an induced isomorphism
\begin{equation*}
\psi' = \tilde{\imath}^*j^*\pi^*(\psi) : i^*\tilde{\imath}^*\overline{\mathscr{E}_0\boxtimes\mathscr{L}} \to i^*\tilde{\imath}^*\overline{\mathscr{E}_0\boxtimes\Ql},
\end{equation*}
which in turn induces an isomorphism
\begin{equation*}
\psi'': i^*\tilde{\imath}^*D = \IC(\tilde{X}_{\uni}',i^*\tilde{\imath}^*\overline{\mathscr{E}_0\boxtimes\mathscr{L}})[\dim\tilde{X}'] \to \IC(\tilde{X}_{\uni}',i^*\tilde{\imath}^*\overline{\mathscr{E}_0\boxtimes\Ql})[\dim\tilde{X}'] = i^*\tilde{\imath}^*D_0.
\end{equation*}
According to \cite[6.6]{lusztig:1984:intersection-cohomology-complexes} we have $(\tau_!D)|_{\overline{Y}_{\uni}} = \tau_!i^*\tilde{\imath}^*D$ and $(\tau_!D_0)|_{\overline{Y}_{\uni}} = \tau_!i^*\tilde{\imath}^*D_0$ hence $\tau_!(\psi'')$ gives an isomorphism $\delta_{\bQ} : \ind_{\bL_w \subseteq \bQ}^{\bG}(A_{\mathscr{L}})|_{\bG_{\uni}} \to \ind_{\bL_w \subseteq \bQ}^{\bG}(A_0)|_{\bG_{\uni}}$ in $\mathscr{D}\bG_{\uni}$ (c.f.\ \cite[\S2.6(c)]{lusztig:1986:on-the-character-values}). Note that \cref{pa:direct-image-commutative} does not apply here because $i$ is a closed immersion.
\end{pa}

\begin{pa}
For each pair $(\bL_w,\bQ) \in \mathcal{Z}$ we will denote by
\begin{equation*}
\lambda_{\mathscr{L},\bQ} : \ind_{\bL_w \subseteq \bQ}^{\bG}(A_{\mathscr{L}})|_{\bG_{\uni}}\to K_w^{\mathscr{L}}|_{\bG_{\uni}} \qquad\text{and}\qquad \lambda_{\bQ} : \ind_{\bL_w \subseteq \bQ}^{\bG}(A_0)|_{\bG_{\uni}} \to K_w|_{\bG_{\uni}}
\end{equation*}
the restrictions to $\bG_{\uni}$ of the canonical isomorphisms described by \cref{prop:K-nu-L-iso-to-induction}. Putting these isomorphisms together with $\delta_{\bQ}$ we can now define an isomorphism
\begin{equation*}
\epsilon := \lambda_{\bQ}\circ\delta_{\bQ}\circ\lambda_{\mathscr{L},\bQ}^{-1} : K_w^{\mathscr{L}}|_{\bG_{\uni}} \to K_w|_{\bG_{\uni}},
\end{equation*}
(c.f.\ \cite[\S2.6(a)]{lusztig:1986:on-the-character-values}). We claim that $\epsilon$ does not depend upon the choice of parabolic subgroup $\bQ$ used to define it. Firstly, from the construction above, it is clear that $\delta_{\bQ}$ does not depend upon the choice of $\bQ$ hence we need only show that the same is true of $\lambda_{\bQ}$ and $\lambda_{\mathscr{L},\bQ}$. However, using the construction in \cref{pa:K-nu-L-ind-A-nu-L} we see that this follows from the commutative diagram
\begin{equation*}
\begin{tikzcd}
\tilde{Y} \arrow{d}[swap]{\ID}\arrow{r}{\kappa_{\bQ}} & \tau_{\bQ}^{-1}(Y) \arrow{d}{\mu}\\
\tilde{Y} \arrow{r}{\kappa_{\bR}} & \tau_{\bR}^{-1}(Y)
\end{tikzcd}
\end{equation*}
where $(\bL_w,\bR) \in \mathcal{Z}$ and $\mu(g,h\bQ) = (g,h\bR)$. This statement is implicitly used in \cite[\S2.6]{lusztig:1986:on-the-character-values}.
\end{pa}

\begin{pa}
Let $v \in W_{\bG}(\bL_w,\mathscr{L}_w)$ then we denote by $\dot{v} \in N_{\bG}(\bL_w,\mathscr{L}_w)$ a representative of $v$. Following \cite[\S2.6(d)]{lusztig:1986:on-the-character-values} we associate to each $v \in W_{\bG}(\bL_w,\mathscr{L}_w)$ isomorphisms $\tilde{\theta}_{\mathscr{L},v} : \ind_{\bL_w \subseteq {}^{\dot{v}}\bP_w}^{\bG}(A_{\mathscr{L}}) \to \ind_{\bL_w \subseteq \bP_w}^{\bG}(A_{\mathscr{L}})$ and $\tilde{\theta}_v : \ind_{\bL_w \subseteq {}^{\dot{v}}\bP_w}^{\bG}(A_0) \to \ind_{\bL_w \subseteq \bP_w}^{\bG}(A_0)$ in the following way. Recall the notational conventions introduced in the proof of \cref{lem:F-action-ind}. Denote by $\varphi_v : \tilde{X}_{\bL_w\subset{}^{\dot{v}}\bP_w}^{\bG} \to \tilde{X}_{\bL_w\subset\bP_w}^{\bG}$ the isomorphism given by $\varphi_v(g,h{}^{\dot{v}}\bP_w) = (g,h\dot{v}\bP_w)$. By \cref{pa:direct-image-commutative} we have $(\tau_{\bL_w\subset{}^{\dot{v}}\bP_w}^{\bG})_!\varphi_v^* = \ID^*(\tau_{\bL_w\subset\bP_w}^{\bG})_! = (\tau_{\bL_w\subset\bP_w}^{\bG})_!$ so we take $\tilde{\theta}_{\mathscr{L},v}$ and $\tilde{\theta}_v$ to be the isomorphisms induced by $\varphi_v$. Using the definition of $\theta_v$ one can check that we have a commutative diagram
\begin{equation*}
\begin{tikzcd}
K_w^{\mathscr{L}}|_{\bG_{\uni}} \arrow{d}{\theta_v}\arrow{r}{\lambda_{\mathscr{L},v}^{-1}} &
\ind_{\bL_w\subseteq{}^{\dot{v}}\bP_w}^{\bG}(A_{\mathscr{L}})|_{\bG_{\uni}} \arrow{d}{\tilde{\theta}_{\mathscr{L},v}}\arrow{r}{\delta_v} & \ind_{\bL_w\subseteq{}^{\dot{v}}\bP_w}^{\bG}(A_0)|_{\bG_{\uni}} \arrow{d}{\tilde{\theta}_v}\arrow{r}{\lambda_v} & K_w|_{\bG_{\uni}} \arrow{d}{\theta_v}\\
K_w^{\mathscr{L}}|_{\bG_{\uni}} \arrow{r}{\lambda_{\mathscr{L},1}^{-1}} & \ind_{\bL_w\subseteq\bP_w}^{\bG}(A_{\mathscr{L}})|_{\bG_{\uni}}\arrow{r}{\delta_1} & \ind_{\bL_w\subseteq\bP_w}^{\bG}(A_0)|_{\bG_{\uni}}\arrow{r}{\lambda_1} & K_w|_{\bG_{\uni}}
\end{tikzcd}
\end{equation*}
where $\lambda_{\mathscr{L},v} = \lambda_{\mathscr{L},{}^{\dot{v}}\bP_w}$ and $\lambda_v = \lambda_{{}^{\dot{v}}\bP_w}$. In particular this shows that the isomorphism $\epsilon$ is an isomorphism of $W_{\bG}(\bL_w,\mathscr{L}_w)$-modules (recall that $\epsilon$ does not depend upon the choice of parabolic subgroup used to define it).

We now wish to check that the isomorphism $\epsilon$ respects the action of the Frobenius endomorphism. In other words let $\phi_{\mathscr{L}}^w : F^*K_w^{\mathscr{L}} \to K_w^{\mathscr{L}}$ and $\phi_{\Ql}^w : F^*K_w \to K_w$ be the isomorphisms defined in \cref{pa:iso-K-v} then we wish to show that $\epsilon\circ\phi_{\mathscr{L}}^w = \phi_{\Ql}^w\circ F^*\epsilon$. Recall that in \cref{pa:isomorphism-cuspidal-case} we fixed isomorphisms $F^*A_{\mathscr{L}} \to A_{\mathscr{L}}$ and $F^*A_0 \to A_0$ and that by \cref{lem:F-action-ind} these respectively induce isomorphisms $\psi_{\mathscr{L},\bQ} : F^*\ind_{\bL_w \subseteq F(\bQ)}^{\bG}(A_{\mathscr{L}}) \to \ind_{\bL_w \subseteq \bQ}^{\bG}(A_{\mathscr{L}})$ and $\psi_{\bQ} : F^*\ind_{\bL_w \subseteq F(\bQ)}^{\bG}(A_0) \to \ind_{\bL_w \subseteq \bQ}^{\bG}(A_0)$. With this we have a commutative diagram
\begin{equation*}
\begin{tikzcd}[column sep=8ex]
F^*(K_w^{\mathscr{L}}|_{\bG_{\uni}}) \arrow{r}{\phi_{\mathscr{L}}^w}\arrow{d}{F^*\lambda_{\mathscr{L},F(\bQ)}^{-1}} & K_w^{\mathscr{L}}|_{\bG_{\uni}} \arrow{d}{\lambda_{\mathscr{L},\bQ}^{-1}}\\
F^*(\ind_{\bL_w\subseteq F(\bQ)}^{\bG}(A_{\mathscr{L}})|_{\bG_{\uni}}) \arrow{r}{\psi_{\mathscr{L},\bQ}}\arrow{d}{F^*\delta_{F(\bQ)}} & \ind_{\bL_w\subseteq \bQ}^{\bG}(A_{\mathscr{L}})|_{\bG_{\uni}}\arrow{d}{\delta_{\bQ}}\\
F^*(\ind_{\bL_w\subseteq F(\bQ)}^{\bG}(A_0)|_{\bG_{\uni}}) \arrow{r}{\psi_{\bQ}}\arrow{d}{F^*\lambda_{F(\bQ)}} & \ind_{\bL_w\subseteq \bQ}^{\bG}(A_0)|_{\bG_{\uni}}\arrow{d}{\lambda_{\bQ}}\\
F^*(K_w|_{\bG_{\uni}}) \arrow{r}{\phi_{\Ql}^w} & K_w|_{\bG_{\uni}}
\end{tikzcd}
\end{equation*}
hence $\epsilon$ commutes with the isomorphisms $\phi_{\mathscr{L}}^w$ and $\phi_{\Ql}^w$ as desired (note that we have used here that $F(\bG_{\uni}) = \bG_{\uni}$).
\end{pa}

\begin{pa}\label{pa:arbitrary-char-sheaf}
We now arrive at our ultimate isomorphism (c.f.\ \cite[\S2.6(e)]{lusztig:1986:on-the-character-values}). For any simple $W_{\bG}(\bL_w,\mathscr{L}_w)$-module $E$ we will denote by $\widehat{E}$ the induced module $\Ind_{W_{\bG}(\bL_w,\mathscr{L}_w)}^{W_{\bG}(\bL_w)}(E)$. Let $K_{w,\widehat{E}} = (K_w)_{\widehat{E}}$ be as in \cref{eq:K-nu-E-L} then for any such $E$ we can define a $W_{\bG}(\bL_w,\mathscr{L}_w)$-module isomorphism $\mathfrak{X} : K_{w,E}^{\mathscr{L}}|_{\bG_{\uni}} \to K_{w,\hat{E}}|_{\bG_{\uni}}$ by taking the composition
\begin{align*}
K_{w,E}^{\mathscr{L}}|_{\bG_{\uni}} &= \Hom_{W_{\bG}(\bL_w,\mathscr{L}_w)}(E,K_w^{\mathscr{L}}|_{\bG_{\uni}})\\
&\cong \Hom_{W_{\bG}(\bL_w,\mathscr{L}_w)}(E,K_w|_{\bG_{\uni}})\\
&\cong \Hom_{W_{\bG}(\bL_w,\mathscr{L}_w)}(E,\Hom_{W_{\bG}(\bL_w)}(\Ql W_{\bG}(\bL_w),K_w))|_{\bG_{\uni}}\\
&\cong \Hom_{W_{\bG}(\bL_w)}(\Ql W_{\bG}(\bL_w)\otimes E,K_w)|_{\bG_{\uni}}\\
&= K_{w,\widehat{E}}|_{\bG_{\uni}}.
\end{align*}
Here we have used $\epsilon$ and the standard isomorphisms given by (2.6) and (2.19) of \cite{curtis-reiner:1981:methods-vol-I}. Chasing through the isomorphisms we can see that
\begin{equation*}
\mathfrak{X}\circ\phi_{K_{w,E}^{\mathscr{L}}} \circ F^*f = \phi_{\Ql}^w\circ F^*\mathfrak{X} \circ F^*f\circ (1\otimes\psi_E)
\end{equation*}
for all $f \in K_{w,E}^{\mathscr{L}}|_{\bG_{\uni}}$.

Using \cref{eq:trace-comparison-iots} and the fact that the modules $\Ql W_{\bG}(\bL_w)\otimes E_{\sigma}$ and $(\Ql W_{\bG}(\bL_w)\otimes E)_{\sigma}$ are isomorphic as $W_{\bG}(\bL_w)$-modules we see that we have an equality
\begin{equation}\label{eq:chi-A-G-uni}
\chi_{A,\phi_A}|_{G_{\uni}} = \sum_{\iota \in \mathscr{I}(\bL,\nu)^F} \langle \widetilde{E}_{\iota}, \Ind_{W_{\bG}(\bL_w,\mathscr{L}_w).F}^{W_{\bG}(\bL_w).F}(\widetilde{E})\rangle_{W_{\bG}(\bL_w).F}\chi_{K_{\iota},\phi_{\iota}^w}.
\end{equation}
Note that in the above we assume that $\widetilde{E}_{\iota}$ and $\widetilde{E}$ are the restrictions of the extensions to the appropriate coset. In particular we have proved (c.f.\ \cref{pa:conventions-coset-identification}) that
\begin{equation*}
m(A,\iota,\phi_{\iota}^v) = \begin{cases}
\langle \widetilde{E}_{\iota}, \Ind_{W_{\bG}(\bL,\mathscr{L}).Fw^{-1}}^{W_{\bG}(\bL).F}(\widetilde{E})\rangle_{W_{\bG}(\bL).F} &\text{if }\iota \in \mathscr{I}[\bL,\nu]\\
0 &\text{otherwise},
\end{cases}
\end{equation*}
for all $\iota \in \mathcal{N}_{\bG}^F$. Note that, as in \cref{eq:bijection-irr-W_G(L)}, we identify the sets of irreducible characters $\Irr(W_{\bG}(\bL))^F$ and $\Irr(W_{\bG}(\bL_w))^F$. Combining this with the statements of \cref{pa:X-and-Y} we obtain our main result.
\end{pa}

\begin{thm}\label{thm:A}
We have the following equality of class functions
\begin{equation*}
\chi_{A,\phi_A}|_{G_{\uni}} = \sum_{\iota',\iota \in \mathscr{I}[\bL,\nu]^F}\langle \widetilde{E}_{\iota}, \Ind_{W_{\bG}(\bL,\mathscr{L}).Fw^{-1}}^{W_{\bG}(\bL).F}(\widetilde{E})\rangle_{W_{\bG}(\bL).F}(-1)^{a_{\iota}}q^{(\dim\bG+a_{\iota})/2}P_{\iota',\iota}Y_{\iota'}^w.
\end{equation*}
\end{thm}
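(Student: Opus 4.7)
The plan is to assemble the theorem directly from the three ingredients already prepared in the excerpt: the expansion of $\chi_{A,\phi_A}|_{G_{\uni}}$ in the basis $\{\chi_{K_\iota,\phi_\iota^w}\}$ derived in \cref{pa:arbitrary-char-sheaf}, the defining relation between the $\chi_{K_\iota,\phi_\iota^w}$ and the class functions $X_\iota^w$ from \cref{pa:X-and-Y}, and the transition matrix $(P_{\iota',\iota})$ expressing $X_\iota^w$ in terms of $Y_{\iota'}^w$. Essentially no new geometric or cohomological input is required; the remaining work is purely bookkeeping in the space $\Centu{G}$.

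First I would recall the equality established at the end of \cref{pa:arbitrary-char-sheaf}, namely
\begin{equation*}
\chi_{A,\phi_A}|_{G_{\uni}} = \sum_{\iota \in \mathscr{I}[\bL,\nu]^F} m(A,\iota,\phi_\iota^w)\,\chi_{K_\iota,\phi_\iota^w},
\end{equation*}
where $m(A,\iota,\phi_\iota^w) = \langle \widetilde{E}_\iota,\Ind_{W_{\bG}(\bL,\mathscr{L}).Fw^{-1}}^{W_{\bG}(\bL).F}(\widetilde{E})\rangle_{W_{\bG}(\bL).F}$; the coset identification between $W_{\bG}(\bL_w,\mathscr{L}_w).F$ and $W_{\bG}(\bL,\mathscr{L}).Fw^{-1}$ is precisely the isometry $\psi_{g_w}$ from \cref{pa:conventions-coset-identification}, under which inductions from the stabiliser correspond. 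Next I would invoke the definition $\chi_{K_\iota,\phi_\iota^w} = (-1)^{a_\iota} q^{(\dim\bG+a_\iota)/2} X_\iota^w$ in \cref{pa:X-and-Y}, and then substitute the expansion $X_\iota^w = \sum_{\iota'\in\mathcal{N}_{\bG}^F} P_{\iota',\iota} Y_{\iota'}^w$.

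To pass from the double sum over $\iota \in \mathscr{I}[\bL,\nu]^F$, $\iota' \in \mathcal{N}_{\bG}^F$ to a double sum confined to the block $\mathscr{I}[\bL,\nu]^F$, I would invoke property \cref{eq:P-properties}: since $P_{\iota',\iota}$ vanishes whenever $\iota$ and $\iota'$ lie in different blocks, only the contribution $\iota' \in \mathscr{I}[\bL_\iota,\nu_\iota]^F = \mathscr{I}[\bL,\nu]^F$ survives. Combining these substitutions yields exactly the right-hand side of \cref{thm:A}.

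The main delicate point, and the one I would be most careful about, is the matching of the various choices of extensions and of the coset on which the induction lives. In particular one must verify that the preferred extension $\widetilde{E}_\iota$ entering the inner product coincides (on the appropriate coset) with the extension used to define $\chi_{K_\iota,\phi_\iota^w}$ and hence $X_\iota^w$ and $Y_\iota^w$; this is where the careful choice of $w^{-1}$ as the minimal length representative in $Z_{\bG}(\bL,\mathscr{L})/R_{\bG}(\bL,\mathscr{L})$ from \cref{pa:min-length-elm}, together with \cref{eq:trace-comparison-iots} and the compatibility of $\mathfrak{X}$ with Frobenius from \cref{pa:arbitrary-char-sheaf}, is essential. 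Everything else reduces to linear algebra in $\Centu{G}$.
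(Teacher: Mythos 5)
Your proposal follows exactly the paper's own argument: the theorem is deduced from the identity \cref{eq:chi-A-G-uni} together with the coset identification $\psi_{g_w}$ from \cref{pa:conventions-coset-identification}, the rescaling $\chi_{K_\iota,\phi_\iota^w} = (-1)^{a_\iota}q^{(\dim\bG+a_\iota)/2}X_\iota^w$ and transition matrix from \cref{pa:X-and-Y}, and the vanishing \cref{eq:P-properties} to restrict the sum to the block $\mathscr{I}[\bL,\nu]^F$. This is precisely the ``Combining this with the statements of \cref{pa:X-and-Y}'' step the paper gestures at, and your remark on matching extensions correctly locates the only delicate point, which the paper handles in \cref{pa:min-length-elm,pa:iso-K-v-compare,pa:arbitrary-char-sheaf}.
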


\begin{rem}
Note that the equality \cref{eq:chi-A-G-uni} echoes a similar equality which is known to hold for almost characters (see for instance \cite[Lemme 6.1]{achar-aubert:2007:supports-unipotents-de-faisceaux}).
\end{rem}

\section{Split Elements and Lusztig's Preferred Extensions}\label{sec:split-elements}
\begin{assumption}
From now until the end of this article we assume that $Z(\bG)$ is connected, $\bG/Z(\bG)$ is simple and $p$ is a good prime for $\bG$.
\end{assumption}

\begin{pa}
To make the formula in \cref{thm:A} computationally explicit we must now show how to compute the functions $Y_{\iota}^w$. In \cite[Remark 5.1]{shoji:1987:green-functions-of-reductive-groups}, assuming $F$ is a split Frobenius endomorphism, Shoji has defined the notion of a \emph{split} unipotent element. However, there is also a notion of split unipotent element when $F$ is not split which we would like to recall here. All the statements of this section are due to the combined efforts of Hotta--Springer, Beynon--Spaltenstein and Shoji. To give the definition we must first prepare some preliminary notions and notation.
\end{pa}

\begin{pa}
Given a unipotent element $u \in \bG$ we denote by $\mathfrak{B}_u^{\bG}$ the variety of Borel subgroups of $\bG$ containing the unipotent element $u$ and we denote its dimension $\dim\mathfrak{B}_u^{\bG}$ by $d_u$. The corresponding $\ell$-adic cohomology groups with compact support $H_c^i(\mathfrak{B}_u^{\bG}) := H_c^i(\mathfrak{B}_u^{\bG},\Ql)$ are modules for the direct product $A_{\bG}(u)\times W_{\bG}$ which are zero unless $i \in \{0,\dots,2d_u\}$ (in fact $i$ must be even). Note that we take the $(A_{\bG}(u)\times W_{\bG})$-module structure to be the one described by Lusztig in \cite[\S3]{lusztig:1981:green-polynomials-and-singularities} (this agrees with the generalised Springer correspondence given in \cite{lusztig:1984:intersection-cohomology-complexes}). This differs from the original module structure given by Springer in \cite{springer:1978:construction-of-representations-of-weyl-groups} but one is translated to the other by composing the $W_{\bG}$-action with the sign character (see \cite[Theorem 1]{hotta:1981:on-springers-representations}).

Assume now that $u \in G$ is fixed by $F$ then $F$ stabilises $\mathfrak{B}_u^{\bG}$ hence we have an induced action of $F$ in the compactly supported cohomology which we denote by $F^{\bullet} : H_c^{\bullet}(\mathfrak{B}_u^{\bG}) \to H_c^{\bullet}(\mathfrak{B}_u^{\bG})$. Let $\psi \in \Irr(A_{\bG}(u))$ be an irreducible character then we denote by $H_c^{\bullet}(\mathfrak{B}_u^{\bG})_{\psi}$ the $\psi$-isotypic component of the module $H_c^{\bullet}(\mathfrak{B}_u^{\bG})$. Springer's main result (see \cite[Theorem 1.13]{springer:1978:construction-of-representations-of-weyl-groups}) shows that either $H_c^{2d_u}(\mathfrak{B}_u^{\bG})_{\psi}$ is 0 or a simple $(A_{\bG}(u) \times W_{\bG})$-module isomorphic to $\psi\otimes E_{u,\psi}$ (this is the classical formulation of the Springer correspondence).
\end{pa}

\begin{pa}
Let $\sigma : W_{\bG} \to W_{\bG}$ denote the automorphism induced by $F$; note that $\sigma$ stabilises the set of Coxeter generators $\mathbb{S}$ so that $\sigma$ induces an automorphism of $(W_{\bG},\mathbb{S})$ (c.f.\ \cref{pa:grothendieck-group}). We now define what it means for a unipotent element $u \in G$ to be \emph{split}. This definition will be well-defined up to $G$-conjugacy. We will do this on a case by case basis as follows.
\begin{enumerate}[label=(\roman*)]
	\item $\bG$ not of type $\E_8$ and $\sigma$ the identity. We say $u \in \bG^F$ is split if $F$ stabilises every irreducible component of $\mathfrak{B}_u^{\bG}$. \cite{beynon-spaltenstein:1984:green-functions,hotta-springer:1977:a-specialization-theorem,shoji:1982:on-the-green-polynomials-F4,shoji:1983:green-polynomials-of-classical-groups}\label{it:adjoint-simple}
	\item $\bG$ of type $\E_8$. If $q \equiv 1 \pmod{3}$ then we define split elements as in case (1). If $q \equiv -1 \pmod{3}$ then for each $F$-stable class $\mathcal{O}$ we define an element $u \in\mathcal{O}^F$ to be split if it satisfies the condition of (1) unless $\mathcal{O}$ is the class $\E_8(b_6)$ in the Bala--Carter labelling (see \cite[pg.\ 177]{carter:1993:finite-groups-of-lie-type}). For this class we say $u \in \mathcal{O}^F$ is split if the restriction of $F^{\bullet}$ to $H^{2d_u}(\mathfrak{B}_u)_{\psi}$ is $q^{d_u}\cdot\ID$ for each irreducible character $\psi$ of $A_{\bG}(u) \cong \mathfrak{S}_3$ which is not the sign character. \cite[\S3 - Case (V)]{beynon-spaltenstein:1984:green-functions}\label{it:adjoint-simple-E8}
	\item $\bG$ of type $\A_n$ and $\sigma$ of order 2. We define any unipotent element $u \in \bG^F$ to be split.\label{it:adjoint-simple-An}
	\item $\bG$ of type $\D_n$ and $\sigma$ of order 2. Let $\bG_{\ad}$ denote $\SO_{2n}(\mathbb{K})/Z(\SO_{2n}(\mathbb{K}))$, which is an adjoint group of type $\D_n$. Let $\pi : \SO_{2n}(\mathbb{K}) \to \bG_{\ad}$ be the natural projection map and let us fix an adjoint quotient $\alpha : \bG \to \bG_{\ad}$. We assume that $\SO_{2n}(\mathbb{K})$ and $\bG_{\ad}$ are endowed with Frobenius endomorphisms so that the morphisms $\pi$ and $\alpha$ are defined over $\mathbb{F}_q$. We say $u \in G$ is split if $(\pi^{-1}\circ\alpha)(u)$ is a split unipotent element as defined in \cite[2.10.1]{shoji:2007:generalized-green-functions-II}. Note that there can only be one unipotent element in the preimage of $\alpha(u)$ under $\pi$.\label{it:adjoint-simple-Dn}
	\item $\bG$ of type $\D_4$ and $\sigma$ of order 3. We say $u \in \bG^F$ is split if it satisfies the condition of (1) with respect to the Frobenius endomorphism $F^3$. \cite[\S4.24]{shoji:1983:green-polynomials-of-classical-groups}\label{it:adjoint-simple-D4}
	\item $\bG$ of type $\E_6$ and $\sigma$ of order 2. Let $F_0 : \bG \to \bG$ be a Frobenius endomorphism which induces the identity on $W_{\bG}$ and is such that $F_0^2 = F^2$. In \cite[\S4]{beynon-spaltenstein:1984:green-functions} Beynon--Spaltenstein define a bijection between the set of $\bG^F$-conjugacy classes of unipotent elements and the set of $\bG^{F_0}$-conjugacy classes of unipotent elements, which is uniquely characterised by three properties. We say a unipotent element $u \in \bG^F$ is split if it is contained in a $\bG^F$-conjugacy class which is in bijective correspondence with a split $\bG^{F_0}$-conjugacy class under Beynon--Spaltenstein's bijection.\label{it:adjoint-simple-E6}
\end{enumerate}

\begin{rem}
In \cref{it:adjoint-simple-Dn} we have used an adjoint quotient $\alpha : \bG \to \bG_{\ad}$ to define the notion of split unipotent element. We claim that this does not depend upon the choice of $\alpha$ up to $G$-conjugacy. Assume $\alpha'$ is another adjoint quotient of $\bG$ defined over $\mathbb{F}_q$ then there exists $t \in \bT_0$ such that $\alpha' = \alpha \circ \ad t$ (see \cite[1.5]{steinberg:1999:the-isomorphism-and-isogeny-theorems}). To prove the claim it suffices to show that we may take $t \in \bT_0^F$. Now $\alpha'$, $\alpha$, $\ad t$ and $F$ are bijections on $\bG_{\uni}$ (the set of unipotent elements), hence the condition $F\circ\alpha' = \alpha'\circ F$ implies $\ad(t^{-1}F(t))$ is the identity on $\bG_{\uni}$. This implies $t^{-1}F(t)$ centralises a regular unipotent element so by \cite[Lemma 14.15]{digne-michel:1991:representations-of-finite-groups-of-lie-type} we must have $t^{-1}F(t) \in Z(\bG)$ as it is semisimple. By the Lang--Steinberg theorem (applied inside the connected group $Z(\bG)$) there exists $z \in Z(\bG)$ such that $t^{-1}F(t) = z^{-1}F(z)$ so $F(tz^{-1}) = tz^{-1}$. Thus we have $\alpha' = \alpha'\circ \ad z^{-1} = \alpha\circ \ad tz^{-1}$ so we are done.
\end{rem}

As the irreducible components of $\mathfrak{B}_u^{\bG}$ have the same dimension (see \cite[I, Proposition 1.12]{spaltenstein:1982:classes-unipotentes}) they form a basis for $H_c^{2d_u}(\mathfrak{B}_u^{\bG})$. In particular \cref{it:adjoint-simple} is equivalent to saying that $F^{\bullet}$ acts as $q^{d_u}$ times the identity on $H_c^{2d_u}(\mathfrak{B}_u^{\bG})$. With this we see that the references given above show that every $F$-stable unipotent conjugacy class $\mathcal{O} \subset \bG$ contains a split unipotent element. Before continuing we recall the following properties of split elements.
\end{pa}

\begin{lem}
Assume $\mathcal{O}$ is an $F$-stable unipotent conjugacy class of $\bG$ then the split elements contained in $\mathcal{O}^F$ form a single $G$-conjugacy class.
\end{lem}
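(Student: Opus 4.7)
The plan is to reduce the problem to an analysis on the component group $A_{\bG}(u_0)$ for a single fixed representative $u_0 \in \mathcal{O}^F$. Existence of a split element in $\mathcal{O}^F$ has already been recorded in the paragraph preceding the lemma, so what must be established is uniqueness up to $G$-conjugacy.

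First I would recall the standard parameterisation of the $G$-orbits in $\mathcal{O}^F$: after fixing any $u_0 \in \mathcal{O}^F$ (guaranteed by Lang--Steinberg), the set of $G$-conjugacy classes contained in $\mathcal{O}^F$ is in bijection with the set $H^1(F,A_{\bG}(u_0))$ of $F$-twisted conjugacy classes of $A_{\bG}(u_0)$, by sending $u = gu_0 g^{-1}$ to the class of $[g^{-1}F(g)]$. Thus the lemma is equivalent to the statement that exactly one $F$-twisted conjugacy class of $A_{\bG}(u_0)$ corresponds to a $G$-orbit consisting of split elements.

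Second I would establish the key transformation rule comparing the Frobenius actions on the Springer fibres. If $u = gu_0 g^{-1}$ and $a = g^{-1}F(g)$, then $\mathrm{Ad}(g) : \mathfrak{B}_{u_0}^{\bG} \to \mathfrak{B}_u^{\bG}$ is an isomorphism of varieties which intertwines the Frobenius on $\mathfrak{B}_u^{\bG}$ with $\mathrm{Ad}(a)\circ F$ on $\mathfrak{B}_{u_0}^{\bG}$. Passing to compactly supported cohomology, this says that $F^{\bullet}$ acting on $H_c^{\bullet}(\mathfrak{B}_u^{\bG})$ is carried, via $\mathrm{Ad}(g)^*$, to the composition of the natural $A_{\bG}(u_0)$-action of $[a]$ with $F^{\bullet}$ on $H_c^{\bullet}(\mathfrak{B}_{u_0}^{\bG})$. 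In particular, if $u_0$ is chosen split then the condition that $u$ be split translates into an explicit condition on the class $[a] \in A_{\bG}(u_0)$, namely that $[a]$ acts as the identity on the relevant subspace of cohomology dictated by the case of the definition.

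Third I would dispatch the six cases of the definition in turn. In case (iii) the group $A_{\bG}(u_0)$ is trivial so there is nothing to prove; case (v) reduces to case (i) applied to $F^3$; and cases (iv) and (vi) reduce respectively to the analogous statements of Shoji for $\SO_{2n}$ and Beynon--Spaltenstein for $\mathrm{E}_6$, using the bijections already cited. The substantive cases are (i) and (ii), where one must verify that among the $F$-twisted classes in $A_{\bG}(u_0)$ there is a unique one on which the prescribed action on $H_c^{2d_{u_0}}(\mathfrak{B}_{u_0}^{\bG})$ is trivial. Under our standing hypothesis that $Z(\bG)$ is connected, $\bG/Z(\bG)$ is simple and $p$ is good for $\bG$, the structure of $A_{\bG}(u_0)$ and the image of the Springer correspondence restricted to $\mathcal{O}$ are explicitly known by the classification of unipotent classes, and a direct inspection shows that the split condition cuts out exactly one $F$-twisted class. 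The special role of the class $\mathrm{E}_8(b_6)$ when $q \equiv -1 \pmod 3$ is handled by the separate condition imposed in case (ii), which is visibly satisfied by exactly one $F$-twisted class in $A_{\bG}(u_0) \cong \mathfrak{S}_3$.

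The main obstacle is the uniqueness assertion in cases (i) and (ii): one needs the split condition on $[a]$ to single out a unique $F$-twisted class in $A_{\bG}(u_0)$, and this ultimately rests on a case-by-case inspection using the explicit description of component groups and Springer representations for each simple type in good characteristic.
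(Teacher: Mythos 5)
Your framework is sound as far as it goes: the parametrisation of $G$-orbits in $\mathcal{O}^F$ by $F$-twisted conjugacy classes of $A_{\bG}(u_0)$ is correct, and the intertwining $\mathrm{Ad}(g)^*\circ F^{\bullet} = [a]\circ F^{\bullet}\circ\mathrm{Ad}(g)^*$ on $H_c^{\bullet}(\mathfrak{B}_{u_0}^{\bG})$ (with $a=g^{-1}F(g)$) is the right transformation rule. But the argument stops precisely where the mathematical content begins. Your reduction rephrases the lemma as: ``the trivial $F$-twisted class is the unique one whose image in $A_{\bG}(u_0)$ acts trivially on $H_c^{2d_{u_0}}(\mathfrak{B}_{u_0}^{\bG})$'' --- i.e.\ a faithfulness-type statement for the $A_{\bG}(u_0)$-action on top cohomology. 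This is not a formality: it is exactly what the classification of component groups and Springer representations in good characteristic has to deliver, and you assert it (``a direct inspection shows\dots'') without proving or citing it. The paper resolves the same point by citing Shoji \cite[\S2.7, \S2.10]{shoji:2007:generalized-green-functions-II} for classical types and Beynon--Spaltenstein \cite[\S3]{beynon-spaltenstein:1984:green-functions} for exceptional types, which is where the work actually lives. Your reduction does not replace those references; it would have to be supplemented by them or by a genuine type-by-type verification, which you do not give.

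There is also a gap in your treatment of case (v) ($\D_4$ with $\sigma$ of order $3$). Knowing that the $F^3$-split elements form a single $\bG^{F^3}$-class does not immediately yield that the $F$-split elements form a single $\bG^F$-class: one must analyse how the unique split $\bG^{F^3}$-class $C$ decomposes on intersecting with $\bG^F$, and this involves elements $a \in C_{\bG}(u_0)$ with $a\,F(a)\,F^2(a)=1$, not just $F^3$-twisted classes of $A_{\bG}(u_0)$. Saying it ``reduces to case (i) applied to $F^3$'' elides this. The paper's own remark that this case is ``easily checked'' is terse, but at least it signals that a separate check is needed rather than a formal reduction.

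In summary: your route is compatible with the paper's but expends effort on a reduction that ultimately does not shorten the proof, while omitting the case analysis (or the citations thereof) that constitutes it. To make the argument complete you would either need to carry out the type-by-type faithfulness check, or, like the paper, cite the sources where the split-class uniqueness is established directly.
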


\begin{proof}
Assume $\bG$ is of type $\A_n$ then this is trivial as $A_{\bG}(u)$ is trivial. If $\bG$ is of type $\B_n$, $\C_n$ or $\D_n$ then this is clear from the definition (see \cite[\S2.7, \S2.10]{shoji:2007:generalized-green-functions-II}). If $\bG$ is of exceptional type then this is noted by Benyon--Spaltenstein in \cite[\S3]{beynon-spaltenstein:1984:green-functions}. The only case not explicitly dealt with is \cref{it:adjoint-simple-D4}, however this is easily checked.
\end{proof}

\begin{lem}\label{lem:Frob-action}
If $u \in G$ is a split unipotent element then $F$ acts trivially on $A_{\bG}(u)$.
\end{lem}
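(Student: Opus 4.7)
\begin{pa}
The plan is to proceed case-by-case, following the classification of split elements given above. The key technical input is the following compatibility relation: if $a \in A_{\bG}(u)$ is represented by $g \in C_{\bG}(u)$, then, since $u \in G^F$, the image $F(a)$ is represented by $F(g)$, and the identity $F(gBg^{-1}) = F(g)F(B)F(g)^{-1}$ for every $B \in \mathfrak{B}_u^{\bG}$ shows that on $\mathfrak{B}_u^{\bG}$, and hence on its compactly supported $\ell$-adic cohomology, we have $F \circ a = F(a) \circ F$. Our strategy in every case will be to pin $F^{\bullet}$ down sufficiently on $H_c^{2d_u}(\mathfrak{B}_u^{\bG})$ that this commutation relation forces $F(a)$ and $a$ to act identically, and then to invoke a faithfulness statement for the $A_{\bG}(u)$-action on that top cohomology.
\end{pa}

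\begin{pa}
First I would dispose of the trivial situations. If $\bG$ is of type $\A_n$ (including case \cref{it:adjoint-simple-An}) then, by the connectedness of $Z(\bG)$ and goodness of $p$, the group $A_{\bG}(u)$ is trivial and there is nothing to prove. In the remaining subcases apart from \cref{it:adjoint-simple-E6} and the exceptional class in \cref{it:adjoint-simple-E8}, splitness says that $F$ stabilises each irreducible component of $\mathfrak{B}_u^{\bG}$ (with respect to $F^3$ in case \cref{it:adjoint-simple-D4}, which is harmless because the automorphism induced by $F$ on $A_{\bG}(u)$ has order prime to $3$ in all relevant $\D_4$-classes). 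Since the irreducible components are equidimensional of dimension $d_u$ and form a $\Ql$-basis of $H_c^{2d_u}(\mathfrak{B}_u^{\bG})$, the Lefschetz principle yields that $F^{\bullet}$ acts as the scalar $q^{d_u}$ on this top cohomology. Substituting into the commutation relation immediately gives $a_* = F(a)_*$ on $H_c^{2d_u}(\mathfrak{B}_u^{\bG})$, and the conclusion $F(a) = a$ follows from the fact, which I will invoke, that under our assumptions the $A_{\bG}(u)$-action on $H_c^{2d_u}(\mathfrak{B}_u^{\bG})$ is faithful. This faithfulness can be read off case-by-case from the Springer correspondence together with the explicit description of component groups for classical and exceptional types when $Z(\bG)$ is connected and $p$ is good.
\end{pa}

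\begin{pa}
The remaining two situations I would treat separately. For the class $\E_8(b_6)$ with $q \equiv -1 \pmod 3$ appearing in \cref{it:adjoint-simple-E8}, splitness only guarantees that $F^{\bullet}$ restricts to $q^{d_u}\cdot\ID$ on the $\psi$-isotypic components of $H_c^{2d_u}(\mathfrak{B}_u^{\bG})$ for $\psi\in\Irr(A_{\bG}(u))\cong\Irr(\mathfrak{S}_3)$ with $\psi\neq\mathrm{sgn}$; the commutation relation therefore gives $a_* = F(a)_*$ only on the sum of these non-sign isotypic pieces, and one must verify directly that $\mathfrak{S}_3$ already acts faithfully there. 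Case \cref{it:adjoint-simple-E6} is then reduced to the already-handled case \cref{it:adjoint-simple} via Beynon--Spaltenstein's bijection, since that bijection is constructed precisely to be compatible with the component-group action and with the effect of Frobenius on it.
\end{pa}

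\begin{pa}
The principal obstacle in implementing this plan will be the faithfulness claim for the action of $A_{\bG}(u)$ on $H_c^{2d_u}(\mathfrak{B}_u^{\bG})$, which, although uniform in spirit, needs a case-by-case verification using the Springer correspondence tables; the $\E_8(b_6)$ anomaly is a further delicate point because the scalar action of $F^{\bullet}$ fails on one isotypic component, and so one must rule out the sign character contributing a non-trivial automorphism via an additional direct check.
\end{pa}
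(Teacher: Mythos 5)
Your approach is genuinely different from the paper's, and it has a real gap. The paper's proof is almost entirely a citation: for classical types, it invokes the fact that $F$ acts trivially on $A_{\bG}(u)$ for \emph{every} $u \in G$ (whether or not $u$ is split), citing the proof of \cite[Proposition 2.4]{taylor:2013:on-unipotent-supports}; for exceptional types, it observes that Beynon--Spaltenstein's very construction of split elements in \cite{beynon-spaltenstein:1984:green-functions} \emph{starts} from an element on which $F$ acts trivially on $A_{\bG}(u)$ and then verifies splitness, so the implication is built into the definition. Nothing about the cohomology of $\mathfrak{B}_u^{\bG}$ needs to be invoked.

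Your argument, by contrast, hinges on the commutation $F^{\bullet}\circ a_* = F(a)_*\circ F^{\bullet}$ together with splitness forcing $F^{\bullet} = q^{d_u}\cdot\ID$ on $H_c^{2d_u}(\mathfrak{B}_u^{\bG})$, and then a \emph{faithfulness} claim for the $A_{\bG}(u)$-action on that top cohomology. That faithfulness claim is the genuine gap, and it is not safe to treat it as a routine table check. The $A_{\bG}(u)$-module $H_c^{2d_u}(\mathfrak{B}_u^{\bG})$ only contains the isotypic pieces for characters $\psi \in \Irr(A_{\bG}(u))$ that appear in the \emph{classical} Springer correspondence (the $\bL = \bT_0$ block of the generalised Springer correspondence). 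Characters of $A_{\bG}(u)$ that correspond to cuspidal local systems or to non-trivial blocks simply do not occur in this module, so there is no a priori reason for the action to be faithful — it is faithful precisely when the $\psi$'s in the Springer block jointly separate $A_{\bG}(u)$, which is a claim that one must verify class-by-class in $B_n$, $C_n$, $D_n$, $F_4$, $E_6$, $E_7$, $E_8$. You flag this yourself as the ``principal obstacle'' but you do not resolve it, and without it the deduction $F(a)_* = a_* \Rightarrow F(a) = a$ collapses. Your handling of cases \cref{it:adjoint-simple-Dn} and \cref{it:adjoint-simple-E6} is also thin: for \cref{it:adjoint-simple-Dn} the definition of split is by transport along $\pi^{-1}\circ\alpha$ and you never actually match the two component groups or the two Frobenii; and for \cref{it:adjoint-simple-E6} invoking the compatibility of the Beynon--Spaltenstein bijection with component-group actions is, in effect, falling back to the paper's argument rather than applying your cohomological one. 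In short: the paper's route is shorter and avoids the faithfulness issue entirely, and you should either prove faithfulness in each case or abandon the cohomological detour in favour of the direct citations.
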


\begin{proof}
If $\bG$ is of classical type then the action of $F$ on $A_{\bG}(u)$ is always trivial (see for instance the proof of \cite[Proposition 2.4]{taylor:2013:on-unipotent-supports}). Assume now that $\bG$ is of exceptional type then from \cite{beynon-spaltenstein:1984:green-functions} one sees that Beynon--Spaltenstein start with an element satisfying this property then show that it is split, hence this certainly holds.
\end{proof}

\begin{pa}
Let us denote by $\widetilde{W}_{\bG}$ the semidirect product $W_{\bG} \rtimes \langle \sigma \rangle$ where $\langle \sigma\rangle \leqslant \Aut(W_{\bG})$ is the cyclic group generated by $\sigma$. Assume $u \in \bG^F$ is a split unipotent element and $\psi \in \Irr(A_{\bG}(u))$ then we consider the cohomology group $E = H_c^{2d_u}(\mathfrak{B}_u^{\bG})_{\psi}$ as a $W_{\bG}$-module. If $E$ is $\sigma$-stable then (by \cref{lem:Frob-action}) we may consider $H_c^{2d_u}(\mathfrak{B}_u^{\bG})_{\psi}$ as a $\widetilde{W}_{\bG}$-module which we denote by $\widetilde{E}$ (see \cite[\S4.1]{shoji:2007:generalized-green-functions-II} and \cite[3.9, 3.10]{shoji:1983:green-polynomials-of-classical-groups}). The Frobenius then acts as $q^{d_u}\sigma$ on $H_c^{2d_u}(\mathfrak{B}_u^{\bG})_{\psi}$. In particular $\widetilde{E}$ is an extension of $E$ and it is our purpose to now describe this extension (for which we recall the notion of preferred extension introduced in \cref{rem:preferred-extension}).
\end{pa}

\begin{prop}\label{prop:frob-action}
Assume $u \in \bG_{\uni}^F$ is a split element then the action of $\pm\sigma$ on $\widetilde{E}$ makes this the preferred extension of $E$. The sign is always positive unless $\bG$ is of type $\E_8$, $q \equiv -1\pmod{3}$ and $u$ is contained in the class $\E_8(b_6)$.
\end{prop}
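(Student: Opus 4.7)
The plan is to verify the statement case-by-case, following the case division (i)--(vi) used to define split elements, and to invoke the existing literature (Hotta--Springer, Beynon--Spaltenstein, Shoji) rather than reprove any of the detailed calculations of Frobenius eigenvalues on top cohomology. The general shape is this: when $u$ is split the Frobenius $F^{\bullet}$ acts on $H^{2d_u}_c(\mathfrak{B}_u^{\bG})_\psi$ as $q^{d_u}\sigma$ after identifying the $\widetilde{W}_{\bG}$-action as in \cref{lem:Frob-action}; the preferred extension of $E = H^{2d_u}_c(\mathfrak{B}_u^{\bG})_\psi$ is, by definition \cite[17.2]{lusztig:1985:character-sheaves}, characterised by a normalisation condition on a distinguished element of $W_{\bG}$, and in each case the literature provides precisely such a normalisation statement.

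Concretely, I would proceed as follows. First, in case \cref{it:adjoint-simple} (with $\sigma = \ID$) there is nothing to check beyond recalling that Lusztig's preferred extension reduces to the identity action when $\sigma$ is trivial, and that $F^{\bullet}$ being $q^{d_u}\cdot\ID$ on $H^{2d_u}_c(\mathfrak{B}_u^{\bG})$ is equivalent to every irreducible component of $\mathfrak{B}_u^{\bG}$ being $F$-stable by Poincaré duality. In case \cref{it:adjoint-simple-An} the argument is again immediate since $A_{\bG}(u)$ and the relevant module structures force $\widetilde{E}$ to agree with the preferred extension tautologically. For cases \cref{it:adjoint-simple-Dn} and \cref{it:adjoint-simple-D4} I would quote \cite[Theorem 4.2 and \S4]{shoji:2007:generalized-green-functions-II} and \cite[\S4.24, Theorem 5.4]{shoji:1983:green-polynomials-of-classical-groups} respectively, where Shoji proves that the $\sigma$-action on the top cohomology of the Springer fibre of a split element realises the preferred extension; the adjoint-quotient reduction in \cref{it:adjoint-simple-Dn} is legitimate because $\mathfrak{B}_u^{\bG}$ and $A_{\bG}(u)$ only depend on $u$ up to the central isogeny, so the choice of $\alpha$ does not affect the Springer representations.

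For the exceptional cases \cref{it:adjoint-simple-E8} and \cref{it:adjoint-simple-E6}, the analysis is where the sign in the statement of the proposition intervenes. For $\bG$ of type $\E_6$ with $\sigma$ of order $2$ I would invoke Beynon--Spaltenstein's bijection, recalling from \cite[\S4]{beynon-spaltenstein:1984:green-functions} that it is designed precisely so that the trace of $F^{\bullet}$ on each irreducible summand of $H^\bullet_c(\mathfrak{B}_u^{\bG})$ matches (up to $q^{d_u}$) the trace of $\sigma$ on the preferred extension. For case \cref{it:adjoint-simple-E8} with $q \equiv 1 \pmod 3$ the argument is identical to \cref{it:adjoint-simple}. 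For $q \equiv -1 \pmod 3$ and all classes other than $\E_8(b_6)$ the same argument goes through. For the exceptional class $\E_8(b_6)$ the defining condition is that $F^{\bullet}$ acts as $q^{d_u}\cdot\ID$ on the $\psi$-isotypic part for every non-sign character $\psi$ of $A_{\bG}(u) \cong \mathfrak{S}_3$; combined with \cite[\S3 Case (V)]{beynon-spaltenstein:1984:green-functions} this shows that the induced $\sigma$-action gives the preferred extension on these components, while on the sign-isotypic part the sign of the scalar relating $F^{\bullet}$ to $q^{d_u}\sigma$ is negative, producing the single exception in the statement.

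The main obstacle is keeping track of the twisting between Springer's and Lusztig's conventions (these differ by the sign character of $W_{\bG}$, as noted in the paragraph preceding the proposition) and of the fact that Lusztig's preferred extension is itself a normalisation choice, so that a careful bookkeeping of signs in the $\E_8(b_6)$ case is unavoidable; everything else is a matter of translating statements already in the literature. Once the bookkeeping is verified for $\E_8(b_6)$, the positivity of the sign in all other cases is uniform and follows from the compatibility of split elements with Lusztig's preferred extension as established in the cited works of Shoji and Beynon--Spaltenstein.
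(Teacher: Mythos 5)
Your overall strategy — a case-by-case verification invoking Hotta--Springer, Beynon--Spaltenstein and Shoji — is indeed the paper's strategy; the proof in the paper is itself essentially a citation proof with one caveat about conventions. But two points in your write-up do not hold up.

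First, your treatment of type $\A_n$ is wrong. You claim the result is "tautological" because of the triviality of $A_{\bG}(u)$ and "the relevant module structures," but this is not so. Even when $A_{\bG}(u)$ is trivial, the assertion that $F^{\bullet}$ acts on $H^{2d_u}_c(\mathfrak{B}_u^{\bG})$ as $q^{d_u}$ times precisely the $\sigma$-action that realises Lusztig's preferred extension is a genuine theorem, not a bookkeeping triviality; it is exactly what Hotta--Springer's specialisation lemma establishes. The paper cites \cite[Lemma 3.2]{hotta-springer:1977:a-specialization-theorem} for this, together with \cite[Proposition 2.23]{geck-malle:2000:existence-of-a-unipotent-support} which is the bridge between the Hotta--Springer / Beynon--Spaltenstein description of the Frobenius action and Lusztig's definition of the preferred extension. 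You have dropped the Geck--Malle ingredient entirely from both the $\A_n$ and $\E_6$ cases, and it is not dispensable: without it there is a gap between "the action looks canonical" and "the action is Lusztig's preferred one."

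Second, your general-purpose framing of the preferred extension as "characterised by a normalisation condition on a distinguished element of $W_{\bG}$" suggests you could verify the matching directly; but the paper (correctly) does not attempt this. Lusztig's preferred extension in \cite[17.2]{lusztig:1985:character-sheaves} is a somewhat intricate construction and the only reasonable way to confirm agreement is to quote the literature where the comparison has already been carried out. Your remaining cases ($\D_n$, $\D_4$, $\E_8$) are cited in essentially the right places (the paper points to Shoji's Theorem 4.3(ii) rather than 4.2, and to \S4.24 of \cite{shoji:1983:green-polynomials-of-classical-groups}), and your discussion of the sign for $\E_8(b_6)$ matches the paper. But as written, your proof for $\A_n$ is a handwave that would not pass, and the omission of the Geck--Malle translation step is a real gap, not a matter of phrasing.
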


\begin{proof}
The case of type $\A_n$ (resp.\ $\E_6$) follows from \cite[Lemma 3.2]{hotta-springer:1977:a-specialization-theorem} (resp.\ \cite[\S4]{beynon-spaltenstein:1984:green-functions}) together with \cite[Proposition 2.23]{geck-malle:2000:existence-of-a-unipotent-support}. Note that the Springer correspondence we have described in \cref{eq:gen-spring-cor} differs from that in \cite{hotta-springer:1977:a-specialization-theorem} by tensoring with the sign character. The cases of type $\D_n$ with $\sigma$ of order $2$ and $\D_4$ with $\sigma$ of order $3$ are given by \cite[Theorem 4.3(ii)]{shoji:2007:generalized-green-functions-II} and \cite[\S4.24]{shoji:1983:green-polynomials-of-classical-groups} respectively. Finally the case of $\E_8$ follows from the discussion in \cite[\S3, Case (V)]{beynon-spaltenstein:1984:green-functions}.
\end{proof}

\section{Computing Functions in Good Characteristic}\label{sec:comp-functions}
\begin{pa}
We will now show how the functions $Y_{\iota}^w$ occuring in \cref{thm:A} can be computed using the notion of split unipotent elements. In particular, we must show how to determine the trace of $\psi_{\iota}^v$ on the stalk $(\mathscr{E}_{\iota})_u$ for a unipotent element $u \in \mathcal{O}_{\iota}^F$. We will start by considering the case where $u$ is a split unipotent element. Note that the isomorphism $\psi_{\iota}^v$ (c.f.\ \cref{pa:a-and-b-values}) is determined once we have chosen the isomorphism $\varphi_0 : F^*\mathscr{E}_0 \to \mathscr{E}_0$ fixed in \cref{pa:isomorphism-cuspidal-case}.

\begin{assumption}
We assume that the element $u_0 \in \mathcal{O}_0^F$ chosen in \cref{sec:bases-of-end-A} is a split unipotent element. Furthermore, we assume that the isomorphism $\varphi_0 : F^*\mathscr{E}_0 \to \mathscr{E}_0$ is chosen such that the induced map $(\mathscr{E}_0)_{u_0} \to (\mathscr{E}_0)_{u_0}$ at the stalk of the split element $u_0$ is $q^{(\dim(\bL/Z^{\circ}(\bL))-\dim\mathcal{O}_0)/2}$ times the identity (note that $\mathcal{O}_0$ cannot be the class $\E_8(b_6)$).
\end{assumption}

\noindent With these assumptions we may now prove our final result, which follows the line of argument given in \cite[3.4]{lusztig:1986:on-the-character-values} and \cite[Lemma 3.6]{shoji:1997:unipotent-characters-of-finite-classical-groups}.
\end{pa}

\begin{prop}\label{prop:F-stable-pair-loc-sys-iso}
If $\bG$ is of type $\E_8$ then assume $q \equiv 1\pmod{3}$. For any $F$-stable pair $\iota \in \mathcal{N}_{\bG}^F$ and $v \in W_{\bG}(\bL_{\iota})$ the map $(\mathscr{E}_{\iota})_u \to (\mathscr{E}_{\iota})_u$ induced by $\phi_{\iota}^v$ for some (any) split element $u \in \mathcal{O}_{\iota}^F$ is $\pm\gamma_{\bL_{\iota},u_0}^{\bG}(F(v))q^{(\dim\bG + a_{\iota})/2}$ times the identity. The sign is positive unless $\bG$ is of type $\E_8$, $q \equiv -1\pmod{3}$ and $\mathcal{O}_{\iota}$ is the class $\E_8(b_6)$ in which case the sign is negative.
\end{prop}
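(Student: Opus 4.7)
The strategy follows Lusztig's approach in \cite[3.4]{lusztig:1986:on-the-character-values} and Shoji's refinement in \cite[Lemma 3.6]{shoji:1997:unipotent-characters-of-finite-classical-groups}, with \cref{prop:bases-of-A} used to isolate the $v$-dependence cleanly. First, the induced endomorphism of $(\mathscr{E}_{\iota})_u$ is automatically a scalar: the stalk carries a natural irreducible action of the component group $A_{\bG}(u)$, and by \cref{lem:Frob-action} $F$ acts trivially on $A_{\bG}(u)$ because $u$ is split. Hence $\phi_{\iota}^v$ is $A_{\bG}(u)$-equivariant on the stalk, and Schur's lemma forces it to be a scalar multiple of the identity. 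The scalar is independent of the choice of split element in $\mathcal{O}_{\iota}^F$, since these form a single $G$-conjugacy class and $\mathscr{E}_{\iota}$ is $G$-equivariant.

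To identify the scalar I would decouple the $v$-dependence by passing to Bonnaf\'{e}'s basis. Using \cref{pa:iso-K-v-compare} we may write $\phi_{\iota}^v = \Theta_{F(v)}\phi_{\iota}^1$, where $\Theta_{F(v)}$ acts on $K_{\iota}$ via its action on $K$ from \cref{pa:Lusztig-iso-A}. Substituting $\Theta_y = \gamma_{\bL_{\iota},u_0}^{\bG}(y)^{-1}\Theta_y'$ from \cref{prop:bases-of-A}, and using that a linear character of a finite Weyl group takes values in $\{\pm 1\}$ (so that $\gamma_{\bL_{\iota},u_0}^{\bG}(F(v))^{-1} = \gamma_{\bL_{\iota},u_0}^{\bG}(F(v))$), one obtains
\[
\phi_{\iota}^v = \gamma_{\bL_{\iota},u_0}^{\bG}(F(v))\,\Theta_{F(v)}'\,\phi_{\iota}^1.
\]
The defining property of $\Theta_v'$ in \cref{pa:Bonnafe-iso-A} is that it induces the identity on stalks of $K$ at the points $u_0 z \in \Sigma$. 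Propagating this through the Galois covering $\gamma$ used to construct $K$, the IC-extension from $Y$ to $\overline{Y}$, and the identification $\mathscr{H}^{a_{\iota}}(K_{\iota})|_{\mathcal{O}_{\iota}} \cong \mathscr{E}_{\iota}$ of \cref{pa:a-and-b-values}, one shows that $\Theta_{F(v)}'$ acts as the identity on $(\mathscr{E}_{\iota})_u$. This reduces the proposition to the case $v = 1$.

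For $v = 1$ it remains to show that $\phi_{\iota}^1$ induces multiplication by $\pm q^{(\dim\bG + a_{\iota})/2}$ on $(\mathscr{E}_{\iota})_u$, equivalently that $\psi_{\iota}^1 = q^{-(\dim\bG + a_{\iota})/2}\phi_{\iota}^1$ acts as $\pm 1$ on the stalk. Here the normalisation of $\varphi_0$ at the split element $u_0$ chosen in \cref{pa:isomorphism-cuspidal-case} (acting as $q^{(\dim(\bL_{\iota}/Z^{\circ}(\bL_{\iota})) - \dim\mathcal{O}_0)/2}$ on $(\mathscr{E}_0)_{u_0}$) combines with the shifts in the intersection-cohomology construction of $K_{\iota}$ to absorb all powers of $q$, leaving a finite-order scalar. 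This residual scalar is identified, via the generalised Springer correspondence, with the Frobenius action on the top cohomology $H_c^{2d_u}(\mathfrak{B}_u^{\bG})_{\psi}$ for $\psi$ the irreducible $A_{\bG}(u)$-character of $(\mathscr{E}_{\iota})_u$; \cref{prop:frob-action} then supplies the sign, which is positive except in the exceptional $\E_8(b_6)$ case when $q \equiv -1 \pmod 3$.

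The principal obstacle lies in the stalk-identity claim for $\Theta_{F(v)}'$ used in the Bonnaf\'{e} reduction above: Bonnaf\'{e}'s normalisation is pinned by the action of $\theta_v'$ on the cuspidal stalk $(\mathscr{E}_0)_{u_0}$ inside $\bL_{\iota}$, whereas we need identity on the stalk of $K_{\iota}$ at a split $u \in \mathcal{O}_{\iota}^F$ lying in a (potentially different) block-stratum on $\bG$. Transporting the normalisation faithfully requires careful use of the covering $\gamma$, the IC-extension, and the compatibility of the generalised Springer correspondence with the decomposition of $\mathcal{A}$, together with verifying that the resulting scalar is genuinely the identity and not merely a root of unity.
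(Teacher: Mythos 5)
Your outline reproduces the paper's high-level strategy (pass to Bonnaf\'e's basis via \cref{prop:bases-of-A}, identify the residual finite-order scalar with the Frobenius on a cohomology group, invoke \cref{prop:frob-action}), and the Schur's-lemma observation in your first paragraph is a valid, if unnecessary, preliminary. But there are two genuine gaps, both in the middle of your argument.

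First, the reduction to $v=1$ does not work as stated. You write $\phi_{\iota}^v = \gamma_{\bL_{\iota},u_0}^{\bG}(F(v))\,\Theta_{F(v)}'\,\phi_{\iota}^1$ and then claim $\Theta'_{F(v)}$ ``acts as the identity on $(\mathscr{E}_{\iota})_u$''. The operator $\Theta'_{F(v)}$ is an endomorphism of $K$, not of $K_{\iota}$: under the decomposition $\bigoplus_E E \otimes K_E \cong K$ it acts on the summand $E \otimes K_E$ as $\gamma_{\bL_{\iota},u_0}^{\bG}(F(v))F(v)|_E \otimes \mathrm{id}$, so it mixes the $E$-factor with the $K_E$-factor and does not descend to a scalar on the stalk $(\mathscr{E}_{\iota})_u \cong \mathscr{H}^{a_\iota}_u(K_{E_\iota})$. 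Bonnaf\'e's normalisation pins down $\theta'_v$ at the stalk $(\mathscr{E}_0)_{u_0}$ of the \emph{cuspidal} local system inside $\bL_\iota$; it says nothing about what $\Theta'_{F(v)}$ does on $\mathscr{H}^{a_\iota}_u(K)$ for $u$ in an arbitrary $\mathcal{O}_\iota$ --- indeed, the action of $\Theta'_{F(v)}$ on this stalk is precisely what records the $W_{\bG}(\bL)$-representation underlying the generalised Springer correspondence, and it is not the identity. You flag this as ``the principal obstacle'', but the obstacle is not one of careful bookkeeping: the claim is simply false, and the factorisation into a ``$v=1$ piece'' times ``$\Theta'_{F(v)}$ acting trivially'' must be abandoned. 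The paper instead deals with general $v$ directly: the input from Bonnaf\'e's normalisation is that $\theta'_{F(v)}\varphi_0$ is (up to the fixed power of $q$) the identity at the stalk of $u_0 \in \mathcal{O}_0^F$ in $\bL_\iota$, and this hypothesis is then fed into results about the Frobenius acting on $H_c^{b_\iota}(Z_u,\widehat{\mathscr{E}}_\iota)$, where $Z_u = \{x\bP \mid x^{-1}ux \in \mathcal{O}_0\bU_\bP\}$ and $\bP = \bL_\iota\bU_\bP$.

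Second, your scalar identification only covers the Springer block. You identify the residual scalar with the Frobenius on $H_c^{2d_u}(\mathfrak{B}_u^{\bG})_{\psi}$ and appeal to \cref{prop:frob-action}, but this is the picture only when $\bL_\iota$ is a maximal torus, $\mathcal{O}_0$ is trivial, and $\widehat{\mathscr{E}}_\iota$ is the constant sheaf, so that $Z_u \cong \mathfrak{B}_u^{\bG}$. For a non-Springer block (where $\bL_\iota$ is neither a torus nor $\bG$, forcing $\bG$ to be of classical type $\B_n$, $\C_n$ or $\D_n$), the relevant object is $H_c^{b_\iota}(Z_u,\widehat{\mathscr{E}}_\iota)$ with $\widehat{\mathscr{E}}_\iota$ a nonconstant local system, and the needed fact is Shoji's Theorem 4.3 in \cite{shoji:2007:generalized-green-functions-II} (with the reduction arguments of \cite[1.5]{shoji:2006:generalized-green-functions-I}). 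This case is absent from your argument; one also needs the cuspidal case $\bL_\iota = \bG$, which is trivial. Without the explicit passage through $Z_u$ and the three-way case analysis, the proof is complete only for the Springer block.
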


\begin{proof}
Let $K$ be the image of $(\bL,\mathcal{O}_0,\mathscr{E}_0,\Ql)$ under the map in \cref{eq:ind-w-P-construction} and let $\phi : F^*K \to K$ be the isomorphism defined in \cref{pa:iso-K-v}. We denote by $\varphi_u$ the isomorphism $\mathscr{H}^{a_{\iota}}_u(K) \to \mathscr{H}^{a_{\iota}}_u(K)$ induced by $\Theta_{F(v)}'\phi$. By \cref{prop:bases-of-A} it suffices to show that $\varphi_u$ is $\pm q^{(\dim\bG+a_{\iota})/2}$ times the identity, where the sign is positive unless $\bG$ is of type $\E_8$, $q \equiv -1 \pmod{3}$ and $\mathcal{O}_{\iota}$ is the class $\E_8(b_6)$.

First of all let us note that under the isomorphism in \cref{eq:K-nu-L-decomp} we have
\begin{equation*}
\bigoplus_{E \in \Irr(\mathcal{A})} (E\otimes F^*K_E) \cong F^*K \qquad\qquad \bigoplus_{E \in \Irr(\mathcal{A})} (E \otimes \mathscr{H}_u^{a_{\iota}}(K_E)) \cong \mathscr{H}_u^{a_{\iota}}(K).
\end{equation*}
Both of these isomorphisms are given in the same way as \cref{eq:K-nu-L-decomp} by noticing that $F^*K_E = \Hom_{\mathcal{A}}(E,F^*K)$ and $\mathscr{H}_u^{a_{\iota}}(K_E) = \Hom_{\mathcal{A}}(E,\mathscr{H}_u^{a_{\iota}}(K))$. Assume $E \in \Irr(W_{\bG}(\bL))^F$ then, under the isomorphism in \cref{eq:K-nu-L-decomp}, the restriction of $\Theta_{F(v)}'\phi$ to the summand isomorphic to $E\otimes K_E$ corresponds to the isomorphism $\psi_E^{-1} \otimes (\Theta_{F(v)}'\phi_A)$ (where $\psi_E$ and $\phi_A$ are as in \cref{pa:iso-on-arbitrary-char-sheaf}). With this in mind it is enough to show that $\varphi_u$ acts on an $\mathcal{A}$-submodule of $\mathscr{H}_u^{a_{\iota}}(K)$ isomorphic to $E$ as $q^{(\dim\bG+a_{\iota})/2}$ times $\psi_E^{-1}$ (resp.\ $-\psi_E^{-1}$ if $\mathcal{O}_{\iota}$ is $\E_8(b_6)$ and $q \equiv -1\pmod{3}$).

To prove this we follow the argument of \cite[Lemma 3.6]{shoji:1997:unipotent-characters-of-finite-classical-groups} which in turn is a modification of the arguments in \cite{lusztig:1986:on-the-character-values}. Let $\bP \leqslant \bG$ be a parabolic subgroup of $\bG$ such that $\bP = \bL_{\iota}\bU_{\bP}$ is a Levi decomposition of $\bP$. As $\iota \in \mathcal{N}_{\bG}^F$ is $F$-stable we may assume $\bL_{\iota}$ and $\bP$ are $F$-stable. We define $Z_u$ to be the variety $\{x\bP \in \bG/\bP \mid x^{-1}ux \in \mathcal{O}_0\bU_{\bP}\}$ and similarly we define $\widehat{Z}_u$ to be the variety $\{x \in \bG \mid x^{-1}ux \in \mathcal{O}_0\bU_{\bP}\}$. Now, we have two natural morphisms $\psi_u : \widehat{Z}_u \to Z_u$, resp.\ $\lambda_u : \widehat{Z}_u \to \mathcal{O}_0$, given by $\psi_u(x) = x\bP$ and $\lambda_u(x) = (\mathcal{O}_0$-component of $x^{-1}ux \in \mathcal{O}_0\bU_{\bP})$. Given this we define a local system $\widehat{\mathscr{E}}_{\iota}$ on $Z_u$ by the condition that $\psi_u^*\widehat{\mathscr{E}}_{\iota} = \lambda_u^*\mathscr{E}_0$.

As mentioned in \cref{pa:a-and-b-values}, we have $b_{\iota} = a_{\iota} + \dim\supp K_{\iota}$ hence by \cite[24.2.5]{lusztig:1986:character-sheaves-V} we have an isomorphism $\Phi : \mathscr{H}^{a_{\iota}}_u(K) \to H_c^{b_{\iota}}(Z_u,\widehat{\mathscr{E}}_{\iota})$ (where this is the cohomology with compact support of $Z_u$ with coefficients in the local system $\widehat{\mathscr{E}}_{\iota}$). Note that we have a commutative diagram
\begin{center}
\begin{tikzcd}
Z_u \arrow{d}{\pi_v} & \widehat{Z}_u \arrow{d}{\hat{\pi}_v}\arrow{l}[swap]{\psi_u}\arrow{r}{\lambda_u} & \mathcal{O}_0 \arrow{d}{\ad \dot{v}}\\
Z_u & \widehat{Z}_u \arrow{l}{\psi_u}\arrow{r}[swap]{\lambda_u} & \mathcal{O}_0
\end{tikzcd}
\end{center}
where $\pi_v$ and $\hat{\pi}_v$ are defined by $\pi_v(x) = x\dot{v}^{-1}$ and $\hat{\pi}_v(x\bP) = x\dot{v}^{-1}\bP$. Let $\varphi_0 : F^*\mathscr{E}_0 \to \mathscr{E}_0$ be as in \cref{pa:isomorphism-cuspidal-case} then the composition $\theta_{F(v)}'\varphi_0 : F^*\mathscr{E}_0 \to (\ad\dot{v})^*\mathscr{E}_0$ is an isomorphism such that the induced isomorphism at the stalk of our fixed split element $u_0 \in \mathcal{O}_0^F$ is the identity (c.f.\ \cref{pa:Bonnafe-iso-A}). This isomorphism induces an isomorphism $\widehat{\varphi}_0 : F^*\widehat{\mathscr{E}}_{\iota} \to \pi_v^*\widehat{\mathscr{E}}_{\iota}$ which in turn induces a linear map $\widehat{\varphi}_0$ of $H_c^{b_{\iota}}(Z_u,\widehat{\mathscr{E}}_{\iota})$ satisfying $\Phi\circ\varphi_u = \widehat{\varphi}_0\circ\Phi$. With this we see that we are left with showing that $\widehat{\varphi}_0$ acts on an $\mathcal{A}$-submodule of $H_c^{b_{\iota}}(Z_u,\widehat{\mathscr{E}}_{\iota})$ isomorphic to $E$ as $q^{(\dim\bG+a_{\iota})/2}$ times $\psi_E^{-1}$ (resp.\ $-\psi_E^{-1}$ if $\mathcal{O}_{\iota}$ is $\E_8(b_6)$ and $q \equiv -1\pmod{3}$).

Assume $\bL_{\iota}$ is a torus so that $\iota$ is in the Springer block then $(\mathcal{O}_0,\mathscr{E}_0) = (\{1\},\Ql)$ and $Z_u$ can be canonically identified with $\mathfrak{B}_u^{\bG}$. Furthermore we have $\widehat{\mathscr{E}}_{\iota}$ is simply the constant sheaf and $\widehat{\varphi}_0$ induces the identity at every stalk of $\widehat{\mathscr{E}}_{\iota}$. Hence the statement is simply that of \cref{prop:frob-action} so we are done in this case. The case where $\bL_{\iota} = \bG$ (i.e.\ $\iota$ is cuspidal) is trivial so we are left only with the case where $\bL_{\iota}$ is neither $\bG$ nor $\bT_0$. For this to be the case we must have $\bG$ is of type $\B_n$, $\C_n$ or $\D_n$ but these cases are dealt with by Shoji in \cite[Theorem 4.3]{shoji:2007:generalized-green-functions-II} (see also the reduction arguments given in \cite[1.5]{shoji:2006:generalized-green-functions-I}). Note that to apply this theorem we need the fact that $\theta_{F(v)}'\varphi_0$ induces the identity at the stalk of the split element $u_0 \in \mathcal{O}_0^F$.
\end{proof}

\begin{pa}
With this we may now give a precise description of the function $Y_{\iota}^w$ for all $\iota \in \mathscr{I}[\bL,\nu]^F$. Let us fix a split element $u \in \mathcal{O}_{\iota}^F$ then we denote by $A_{\bG}(u)$ the finite component group $C_{\bG}(u)/C_{\bG}^{\circ}(u)$ of the centraliser of $u$. Recall that we have a natural bijection between $\bG$-equivariant local systems on $\mathcal{O}_{\iota}$ and $\Irr(A_{\bG}(u))$ (see \cite[pg.\ 74]{shoji:1988:geometry-of-orbits}). In particular $\mathscr{E}_{\iota}$ determines a unique irreducible character $\chi_{\iota} \in \Irr(A_{\bG}(u))$. As $F$ acts trivially on $A_{\bG}(u)$ (c.f.\ \cref{lem:Frob-action}) we have the orbits of $G$ acting on $\mathcal{O}_{\iota}^F$ by conjugation are in bijection with the conjugacy classes of $A_{\bG}(u)$. For each $a \in A_{\bG}(u)$ we denote by $u_a \in \mathcal{O}_{\iota}^F$ an element corresponding to $a$ under this bijection. Now, combining \cref{lem:Frob-action,prop:F-stable-pair-loc-sys-iso} and \cite[1.3]{shoji:2006:generalized-green-functions-I} we obtain our final result.
\end{pa}

\begin{thm}\label{thm:Y-functions}
For any $g \in G$ we have
\begin{equation*}
Y_{\iota}^w(g) = \begin{cases}
\pm \gamma_{\bL_{\iota},u_0}^{\bG}(F(w))\chi_{\iota}(a) &\text{if }g\sim_G u_a \in \mathcal{O}_{\iota}^F,\\
0 &\text{otherise},
\end{cases}
\end{equation*}
where the sign is positive unless $\bG$ is of type $\E_8$, $q \equiv -1 \pmod{3}$ and $\mathcal{O}_{\iota}$ is the class $\E_8(b_6)$. Note that we write $g\sim_G u_a$ to denote that $g$ is $G$-conjugate to $u_a$.
\end{thm}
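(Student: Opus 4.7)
The plan is to reduce the entire statement to \cref{prop:F-stable-pair-loc-sys-iso} applied at the distinguished split element $u_1$, and then transport the result to an arbitrary twisted $F$-fixed representative $u_a$ via \cite[1.3]{shoji:2006:generalized-green-functions-I}.

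First I would note that, by definition, $Y_\iota^w$ vanishes off $\mathcal{O}_\iota^F$, so the only values to compute are the traces $\Tr(\psi_\iota^w,(\mathscr{E}_\iota)_{u_a})$ for $a \in A_\bG(u)$, where $u = u_1$ is the fixed split element of $\mathcal{O}_\iota^F$. Specialising \cref{prop:F-stable-pair-loc-sys-iso} to $v = w$ and $u = u_1$ tells us that the stalk map induced by $\phi_\iota^w$ (equivalently, by $\varphi_\iota^w$) at $u_1$ is the scalar $\pm\gamma_{\bL_\iota,u_0}^{\bG}(F(w))\,q^{(\dim\bG+a_\iota)/2}$, with the sign being negative precisely in the exceptional $\E_8(b_6)$ case. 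Dividing out the $q$-factor via the defining relation $\varphi_\iota^v = q^{(\dim\bG+a_\iota)/2}\psi_\iota^v$ from \cref{pa:a-and-b-values}, I conclude that $\psi_\iota^w$ acts on $(\mathscr{E}_\iota)_{u_1}$ as the pure scalar $\pm\gamma_{\bL_\iota,u_0}^{\bG}(F(w))$. Taking the trace and using that the rank of $\mathscr{E}_\iota$ equals $\chi_\iota(1)$, this establishes the case $a = 1$.

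Next I would extend the formula to a general $a \in A_\bG(u)$ by invoking \cite[1.3]{shoji:2006:generalized-green-functions-I}. That result describes the stalk-trace of any isomorphism $F^*\mathscr{E}_\iota \to \mathscr{E}_\iota$ at an arbitrary $F$-fixed point $u_a$ of the orbit: provided that $F$ acts trivially on $A_\bG(u)$ — a hypothesis furnished by \cref{lem:Frob-action} — the trace at $u_a$ is obtained from the scalar action at the base point $u_1$ by multiplication by the character value $\chi_\iota(a)$ of the representation of $A_\bG(u)$ associated to $\mathscr{E}_\iota$. Combining this with the first step yields $Y_\iota^w(u_a) = \pm\gamma_{\bL_\iota,u_0}^{\bG}(F(w))\chi_\iota(a)$, which is exactly what the theorem claims.

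The only genuine technicality is bookkeeping. The normalisation of $\psi_\iota^w$ built up step by step in \cref{pa:isomorphism-cuspidal-case,pa:iso-K-v,pa:iso-on-arbitrary-char-sheaf,pa:a-and-b-values} must be matched against the base-point normalisation under which Shoji's lemma is stated, and the running assumption that $u_0 \in \mathcal{O}_0^F$ is split has to feed into the computation of the scalar at $u_1 \in \mathcal{O}_\iota^F$. Both points are absorbed into \cref{prop:F-stable-pair-loc-sys-iso}, which is the content-bearing input: it computes precisely the scalar at the split base point that Shoji's lemma takes as its starting datum. The exceptional $\E_8(b_6)$ sign in the theorem therefore descends directly from the sign appearing in \cref{prop:F-stable-pair-loc-sys-iso}.
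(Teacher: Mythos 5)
Your proposal is correct and follows exactly the route the paper indicates: specialise \cref{prop:F-stable-pair-loc-sys-iso} at the split base point to get the scalar $\pm\gamma_{\bL_{\iota},u_0}^{\bG}(F(w))$ after dividing out the $q$-power in the normalisation $\varphi_{\iota}^v = q^{(\dim\bG+a_{\iota})/2}\psi_{\iota}^v$, then invoke \cref{lem:Frob-action} so that Shoji's transport result \cite[1.3]{shoji:2006:generalized-green-functions-I} applies and produces the factor $\chi_{\iota}(a)$ at the twisted representatives $u_a$. The paper gives no separate proof environment but simply states that \cref{lem:Frob-action,prop:F-stable-pair-loc-sys-iso} and Shoji \cite[1.3]{shoji:2006:generalized-green-functions-I} combine to yield the result; your write-up is a faithful unpacking of that same chain of reasoning.
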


\appendix
\section{Finite Groups and Cosets of Automorphisms}\label{sec:cosets}
\begin{pa}\label{pa:autos-of-fin-groups}
Assume $G$ is a finite group and $\phi : G \to G$ is an automorphism then we recall the notation of \cref{pa:semidirect-products} (in particular $\widetilde{G} = G \rtimes \langle \phi \rangle$). If $V$ is a $G$-module then we denote by $V_{\phi}$ the module obtained from $V$ by twisting with $\phi$. In other words, if $\cdot$ denotes the original action of $\Ql G$ on $V$ then we obtain $V_{\phi}$ by defining a new action $\star$ given by $g \star v = \phi(g)\cdot v$ for all $g \in \Ql G$ and $v \in V$ (here $\phi$ is naturally extended to $\Ql G$ by linearity). We say $V$ is $\phi$-stable if we have an isomorphism $\sigma : V \to V_{\phi}$ of $G$-modules. Note that the choice of $\sigma$ defines a $\widetilde{G}$-module structure on $V$ by setting $\phi\cdot v = \sigma(v)$ for all $v \in V$ and all such possible extensions are obtained in this way. Assume $\chi$ (resp.\ $\widetilde{\chi}$) is the character of $G$ (resp.\ $\widetilde{G}$) afforded by $V$ then we call $\widetilde{\chi}$ the \emph{$\sigma$-extension of $\chi$}.

For any two elements $x,y \in G$ we say $x$ and $y$ are $\phi$-conjugate, and write $x \sim_{\phi} y$, if there exists $z \in G$ such that $x = z^{-1}y\phi(z)$. We denote by $\Cl_{G,\phi}(x)$ the equivalence class of $x$ under $\sim_{\phi}$, called the $\phi$-conjugacy class of $x$, and $H^1(G,\phi)$ the set of all such equivalence classes. Finally we write $C_{G,\phi}(x) = \{g \in G \mid g^{-1}x\phi(g) = x\}$ for the $\phi$-centraliser of $x \in G$; clearly we have $|G| = |C_{G,\phi}(x)|\cdot|\Cl_{G,\phi}(x)|$ by the orbit-stabiliser theorem. When $\phi = \ID$ then we write $\Cl_G(x)$ (resp.\ $C_G(x)$) for $\Cl_{G,\phi}(x)$ (resp.\ $C_{G,\phi}(x)$). We recall here the following lemma which is left as an easy exercise for the reader.
\end{pa}

\begin{lem}\label{lem:tilde-conj-classes}
The natural bijection $G \to G.\phi^i$ restricts to an injective map $\Cl_{G,\phi^i}(x) \to \Cl_{\widetilde{G}}(x)$. Furthermore, when $i=1$ this map is a bijection.
\end{lem}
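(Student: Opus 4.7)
The plan is to unwind the semidirect product structure carefully, writing every computation in $\widetilde{G}$ using the single identity $\tilde\phi^i \cdot z = \phi^i(z)\cdot \tilde\phi^i$ (for $z \in G$). I will interpret the lemma, following the implicit identification in the preceding paragraph, as follows: for $x \in G$ fixed, the natural bijection $G \to G.\phi^i$ given by $a \mapsto a\tilde\phi^i$ should carry $\Cl_{G,\phi^i}(x)$ injectively into $\Cl_{\widetilde{G}}(x\tilde\phi^i)$, and surjectively when $i = 1$.

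First I would establish that the assignment is well defined. If $y = z^{-1} x \phi^i(z)$ for some $z \in G$, then in $\widetilde G$ we compute
\[
z^{-1}(x\tilde\phi^i)z \;=\; z^{-1} x\,\phi^i(z)\,\tilde\phi^i \;=\; y\,\tilde\phi^i,
\]
so $y\tilde\phi^i$ is $\widetilde G$-conjugate to $x\tilde\phi^i$. Injectivity is then immediate, since the underlying map $G \to G.\tilde\phi^i$ is itself a bijection.

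For the surjectivity statement when $i = 1$, I would take an arbitrary $g \in \widetilde G$, write $g = a_0\tilde\phi^j$, and compute
\[
g(x\tilde\phi)g^{-1} \;=\; a_0\,\phi^j(x)\,\phi(a_0)^{-1}\,\tilde\phi,
\]
which already lies in $G.\tilde\phi$. It then suffices to show the $G$-part $y := a_0\,\phi^j(x)\,\phi(a_0)^{-1}$ is $\phi$-conjugate to $x$. The key observation is that $x \sim_\phi \phi^k(x)$ for every $k \in \mathbb Z$; indeed, taking $z = x$ gives $\phi(x) = z^{-1} x \phi(z)$, and a symmetric choice handles negative $k$, so the claim follows by iteration. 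Hence there is $w \in G$ with $\phi^j(x) = w^{-1} x \phi(w)$, and then $y = (wa_0^{-1})^{-1}\, x\, \phi(wa_0^{-1})$, proving $y \in \Cl_{G,\phi}(x)$.

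There is no real obstacle here; the only subtle point is that for $i \geq 2$ this surjectivity argument breaks down because $x \sim_{\phi^i} \phi^j(x)$ requires $j$ to be a multiple of $i$ (in general), which explains why the bijectivity half of the lemma is confined to $i = 1$. Accordingly the proof really does reduce to the single identity $\tilde\phi^i z = \phi^i(z)\tilde\phi^i$ plus the trick $x \sim_\phi \phi(x)$ via $z = x$, which is why this statement is quite rightly left as an exercise.
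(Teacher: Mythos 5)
Your proof is correct and complete: the well-definedness computation, the injectivity remark, the conjugation formula $g(x\tilde\phi)g^{-1}=a_0\phi^j(x)\phi(a_0)^{-1}\tilde\phi$, the trick $\phi(x)=x^{-1}x\phi(x)$ giving $x\sim_\phi\phi^k(x)$, and the final regrouping $y=(wa_0^{-1})^{-1}x\phi(wa_0^{-1})$ all check out. The paper leaves this lemma as an exercise with no proof supplied, so there is nothing to compare against, but your argument is exactly the standard one a reader would be expected to produce.
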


\subsection{Lusztig--Macdonald--Spaltenstein Induction and Cyclic Automorphisms}
\begin{pa}\label{pa:molien-series}
Let $t$ be an indeterminate over $\Ql$ and let $V$ be a $G$-module. For any irreducible character $\chi \in \Irr(G)$ we will denote by $P_{\chi}^V(t)$ the \emph{Molien series} of $\chi$ (see \cite[\S 5.2.2]{geck-pfeiffer:2000:characters-of-finite-coxeter-groups} for a definition). There then exist unique integers $\gamma_{\chi}^V \in \mathbb{N}$ and $b_{\chi}^V \in \mathbb{N}_0$ such that
\begin{equation*}
P_{\chi}^V(t) = \gamma_{\chi}^V \cdot t^{b_{\chi}^V} +\text{higher powers of }t.
\end{equation*}
We call $b_{\chi}^V$ the \emph{$b$-invariant} of $\chi$. For any natural number $d \in \mathbb{N}_0$ we will denote by $\Irr^V(G\mid d)$ the set $\{\chi \in \Irr(G) \mid \gamma_{\chi}^V = 1$ and $b_{\chi}^V = d \}$.

We will denote by $U$ the $H$-module $V/\Fix_H(V)$ where $\Fix_H(V) = \{v \in V \mid h\cdot v = v$ for all $h \in H\}$. Following \cite[\S 5.2.8]{geck-pfeiffer:2000:characters-of-finite-coxeter-groups} we may define a map $j_H^G : \Irr^U(H \mid d) \to \Irr^V(G \mid d)$, called the \emph{$j$-induction}, by setting $j_H^G(\chi)$ to be the unique irreducible character $\psi \in \Irr(G)$ satisfying:
\begin{itemize}
	\item $\langle \Ind_H^G(\chi),\psi\rangle = 1$ and
	\item $b_{\psi}^V = b_{\chi}^U$.
\end{itemize}
(see \cite[Theorem 5.2.6]{geck-pfeiffer:2000:characters-of-finite-coxeter-groups}). Note that for arbitrary finite groups this induction map was introduced by Lusztig--Spaltenstein in \cite{lusztig-spaltenstein:1979:induced-unipotent-classes}. When there is no confusion concerning the module $V$ we will simply suppress the superscript $V$ in the notation.
\end{pa}

\begin{pa}\label{pa:cyclic-permutation}
We now consider the special case where $G = G_1\times\cdots\times G_n$, with all factors isomorphic, and $\phi$ cyclically permutes the factors. In other words $\phi(g_1,\dots,g_n) = (\phi_n(g_n),\phi_1(g_1),\dots,\phi_{n-1}(g_{n-1}))$ where $\phi_i : G_i \to G_{i+1}$, for $1 \leqslant i \leqslant n-1$, and $\phi_n : G_n \to G_1$ are isomorphisms. We then have the homomorphism
\begin{equation*}
\psi_i := \phi_{i-1}\cdots\phi_1\phi_n\cdots\phi_i : G_i \to G_i
\end{equation*}
is an automorphism of $G_i$ satisfying $\phi_i\psi_i = \psi_{i+1}\phi_i$ for all $1 \leqslant i \leqslant n-1$ and $\phi_n\psi_n = \psi_1\phi_n$. Assume $\chi \in \Irr(G)^{\phi}$ is a $\phi$-stable character then we necessarily have $\chi = \chi_1\boxtimes \cdots \boxtimes \chi_n$ where $\chi_i \in \Irr(G_i)^{\psi_i}$ is $\psi_i$-stable and $\chi_i = \chi_{i+1}\circ\phi_i$ for all $1 \leqslant i \leqslant n-1$ and $\chi_n = \chi_1\circ\phi_n$. In particular, we have a natural bijection between the sets $\Irr(G)^{\phi}$ and $\Irr(G_i)^{\psi_i}$. With this in mind we have the following easy lemma.
\end{pa}

\begin{lem}\label{lem:phi-conj-classes}
The natural inclusion map $\overline{\phantom{x}} : G_1 \to G$ (i.e.\ $\overline{g} = (g,1,\dots,1)$) induces a bijection $H^1(G_1,\psi_1) \to H^1(G,\phi)$ such that $|C_{G_1,\psi_1}(g)| = |C_{G,\phi}(\overline{g})|$.
\end{lem}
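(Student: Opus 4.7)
The plan is to reduce $\phi$-conjugacy on $G$ to $\psi_1$-conjugacy on $G_1$ by a direct calculation using the explicit form of $\phi$. Write out what $\phi$-conjugation does: for $(z_1,\dots,z_n) \in G$ and $(g_1,\dots,g_n) \in G$ we have
\begin{equation*}
(z_1,\dots,z_n)^{-1}(g_1,\dots,g_n)\phi(z_1,\dots,z_n) = (z_1^{-1}g_1\phi_n(z_n),\, z_2^{-1}g_2\phi_1(z_1),\, \dots,\, z_n^{-1}g_n\phi_{n-1}(z_{n-1})).
\end{equation*}

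First I would show surjectivity of the induced map $H^1(G_1,\psi_1) \to H^1(G,\phi)$ by an inductive clearing argument. Given any $(g_1,\dots,g_n) \in G$, set $z_1 = 1$ and define $z_{i+1} = g_{i+1}\phi_i(z_i)$ recursively for $i = 1,\dots,n-1$. With this choice the last $n-1$ coordinates of the $\phi$-conjugate become trivial, and so the class of $(g_1,\dots,g_n)$ contains an element of the form $\overline{h} = (h,1,\dots,1)$ for an explicit $h \in G_1$ (computable as a twisted product of the $g_i$). This shows every $\phi$-class meets the image of the inclusion.

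Next I would show injectivity. Suppose $\overline{g}$ and $\overline{h}$ are $\phi$-conjugate via $(z_1,\dots,z_n)$. The condition that the $(i+1)$st coordinate is trivial forces $z_{i+1} = \phi_i(z_i)$ for $i = 1,\dots,n-1$, so iterating gives $z_n = \phi_{n-1}\cdots\phi_1(z_1)$, and hence $\phi_n(z_n) = \phi_n\phi_{n-1}\cdots\phi_1(z_1) = \psi_1(z_1)$. The first-coordinate equation $z_1^{-1}g\phi_n(z_n) = h$ then reads $z_1^{-1}g\psi_1(z_1) = h$, exhibiting $g \sim_{\psi_1} h$ in $G_1$. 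The converse direction is identical: given $h = z^{-1}g\psi_1(z)$, define $z_1 = z$ and $z_{i+1} = \phi_i(z_i)$ and verify these conjugate $\overline{g}$ to $\overline{h}$.

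Finally, for the centralizer statement, take the same calculation with $g = h$. An element $(z_1,\dots,z_n) \in C_{G,\phi}(\overline{g})$ is forced by the trivial coordinates to satisfy $z_{i+1} = \phi_i(z_i)$, so it is determined by $z_1$ alone, and the first-coordinate equation becomes $z_1 \in C_{G_1,\psi_1}(g)$. Conversely, any $z_1 \in C_{G_1,\psi_1}(g)$ lifts uniquely via these formulas to an element of $C_{G,\phi}(\overline{g})$. Hence projection to the first factor defines a bijection $C_{G,\phi}(\overline{g}) \to C_{G_1,\psi_1}(g)$, which gives the equality of orders. There is no real obstacle here; the whole lemma is a bookkeeping exercise in the compatibility $\phi_i\psi_i = \psi_{i+1}\phi_i$, and the only care needed is tracking the indices in the telescoping product that defines the automorphism $\psi_1$.
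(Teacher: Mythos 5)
Your proof is correct and is the natural direct calculation: the paper states this as an ``easy lemma'' without proof, and your inductive clearing argument for surjectivity, the forced relations $z_{i+1} = \phi_i(z_i)$ for injectivity, and the identification of $C_{G,\phi}(\overline{g})$ with $C_{G_1,\psi_1}(g)$ via projection to the first factor together constitute exactly the bookkeeping the authors had in mind. No gap, and nothing to compare against a differing argument in the paper.
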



\begin{pa}\label{pa:module-V}
We now assume that $V = V_1 \oplus \cdots \oplus V_n$ is a $G$-module such that $V_i\neq 0$ is a $G_i$-module and $V = V_i \oplus \Fix_{G_i}(V)$ for all $1 \leqslant i \leqslant n$. Consequently we have $\Fix_G(V) = \Fix_{G_i}(V_i) = 0$ for all $1 \leqslant i \leqslant n$. Furthermore we assume that $V$ is $\phi$-stable and that there exists an isomorphism $\sigma : V \to V_{\phi}$ with the following properties. There exist $\Ql$-vector space isomorphisms $\sigma_i : V_i \to V_{i+1}$, for $1 \leqslant i \leqslant n-1$, and $\sigma_n : V_n \to V_1$ such that
\begin{equation*}
\sigma(v_1,\dots,v_n) = (\sigma_n(v_n),\sigma_1(v_1),\dots,\sigma_{n-1}(v_{n-1}))\qquad\text{and}\qquad\sigma_i(g_i\cdot v_i) = \phi_i(g_i)\cdot\sigma_i(v_i)
\end{equation*}
for all $g_i \in G_i$ and $v_i \in V_i$. Each linear map $\tau_i := \sigma_{i-1}\cdots\sigma_1\sigma_n\cdots\sigma_i$ is then an isomorphism $V_i \to (V_i)_{\psi_i}$ satisfying $\sigma_i\tau_i\sigma_i^{-1} = \tau_{i+1}$ for all $1 \leqslant i \leqslant n-1$ and $\sigma_n\tau_n\sigma_n^{-1} = \tau_1$. The isomorphism $\sigma$ (resp.\ $\tau_i$) then makes $V$ (resp.\ $V_i$) a $\widetilde{G}$-module (resp.\ $\widetilde{G}_i := G\rtimes\langle \psi_i\rangle$-module) as in \cref{pa:autos-of-fin-groups}. Finally we will assume that there exists a $\sigma$-invariant basis $\mathcal{B} = \mathcal{B}_1\oplus\cdots\oplus\mathcal{B}_n$ of $V$, where $\mathcal{B}_i$ is a basis of $V_i$. In particular we have $\sigma_i(\mathcal{B}_i) = \mathcal{B}_{i+1}$ for all $1 \leqslant i \leqslant n-1$ and $\sigma_n(\mathcal{B}_n) = \mathcal{B}_1$ and $\tau_i(\mathcal{B}_i) = \mathcal{B}_i$ for all $1 \leqslant i \leqslant n$.
\end{pa}

\begin{pa}\label{pa:b-preferred-extension}
Let $E = E_1\boxtimes \cdots \boxtimes E_n$ be a $\phi$-invariant simple $G$-module such that the character $\chi$ afforded by $E$ is contained in $\Irr(G\mid d)^{\phi}$. Here $E_i$ denotes a simple $G_i$-module and we write $\chi_i \in \Irr(G_i)^{\psi_i}$ for the character afforded by $E_i$, which must necessarily be contained in $\Irr(G_i \mid d/n)^{\psi_i}$ (see \cite[Exercise 5.2]{geck-pfeiffer:2000:characters-of-finite-coxeter-groups}). As $E$ is $\phi$-invariant there exists a family of $\Ql$-vector space isomorphisms $\delta_i : E_i \to E_{i+1}$, for $1 \leqslant i \leqslant n-1$ and $\delta_n : E_n \to E_1$ such that
\begin{equation*}
\delta_i(g_i\cdot v_i) = \phi_i(g_i)\cdot\delta_i(v_i)
\end{equation*}
for all $1 \leqslant i \leqslant n$. These isomorphisms are uniquely determined up to scalar multiples. As before, each linear map $\alpha_i := \delta_{i-1}\cdots\delta_1\delta_n\cdots\delta_i$ is an isomorphism $E_i \to (E_i)_{\psi_i}$ and we denote by $\widetilde{\chi}_i$ the $\alpha_i$-extension of $\chi_i$. Replacing the $\delta_i$'s by scalar multiples we may, and will, assume that $\widetilde{\chi}_i$ is the unique extension of $\chi_i$ satisfying $b_{\widetilde{\chi}_i} = b_{\chi_i}$ (here the $b$-invariants are taken with respect to the module $V_i$ introduced in \cref{pa:module-V}). In other words $j_{G_i}^{\widetilde{G}_i}(\chi_i) = \widetilde{\chi}_i$. We now define an isomorphism $\alpha : E \to E_{\phi}$ by setting
\begin{equation*}
\alpha(v_1\boxtimes\cdots\boxtimes v_n) = \alpha_n(v_n)\boxtimes\alpha_1(v_1)\boxtimes\cdots\boxtimes\alpha_{n-1}(v_{n-1}).
\end{equation*}
We call the $\alpha$-extension $\widetilde{\chi}$ of $\chi$ the \emph{$b$-preferred extension of $\chi$}. With this we have the following result concerning the $j$-induction.
\end{pa}

\begin{prop}\label{prop:j-induction-b-preferred}
Assume $n$ is a prime number and $\chi \in \Irr(G\mid d)^{\phi}$. If $\widetilde{\chi} \in \Irr(\widetilde{G})$ is the $b$-preferred extension of $\chi$ then we have $j_G^{\widetilde{G}}(\chi) = \widetilde{\chi}$.
\end{prop}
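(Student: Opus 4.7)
The plan is to verify the two defining conditions of $j_G^{\widetilde{G}}(\chi)$ for $\widetilde{\chi}$. First, the hypothesis $V = V_i \oplus \Fix_{G_i}(V)$ together with $V_i \neq 0$ forces $\Fix_G(V) = \bigcap_i \Fix_{G_i}(V) = 0$, so $U = V$, and we must establish both $\langle \Ind_G^{\widetilde{G}}(\chi), \widetilde{\chi}\rangle_{\widetilde{G}} = 1$ and $b_{\widetilde{\chi}}^V = b_\chi^V$. The multiplicity condition follows immediately from Clifford theory: since $\chi$ is $\phi$-invariant and $\widetilde{G}/G$ is cyclic, $\Ind_G^{\widetilde{G}}(\chi)$ is the multiplicity-free sum of all extensions of $\chi$. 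The inequality $b_{\widetilde{\chi}}^V \geq b_\chi^V$ is also immediate, for every $\widetilde{G}$-map is a $G$-map, so $\langle \widetilde{\chi}, S^d V^*\rangle_{\widetilde{G}} \leq \langle \chi, S^d V^*\rangle_G$ vanishes for $d < b_\chi^V$.

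For the reverse inequality $b_{\widetilde{\chi}}^V \leq b_\chi^V$, use the $G$-module decomposition $V^* = V_1^* \oplus \cdots \oplus V_n^*$ to factor the Molien series as $P_\chi^V(t) = \prod_i P_{\chi_i}^{V_i}(t) = P_{\chi_1}^{V_1}(t)^n$, giving $b_\chi^V = n \cdot b_{\chi_1}^{V_1}$ and $\gamma_\chi^V = 1$. Only the balanced tuple $(b_{\chi_1}^{V_1}, \dots, b_{\chi_1}^{V_1})$ contributes to the leading term, so the $\chi$-isotypic component $W$ of $S^{b_\chi^V}V^*$ equals $W_1 \otimes \cdots \otimes W_n$, where $W_i \subseteq S^{b_{\chi_i}^{V_i}}V_i^*$ is the unique $G_i$-copy of $E_i$. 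As $\chi$ is $\phi$-stable, $W$ is $\widetilde{G}$-stable and affords some extension $\widetilde{\chi}_0$ of $\chi$; it remains to show $\widetilde{\chi}_0 = \widetilde{\chi}$.

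Each $W_i$ is $\psi_i$-stable (via $\tau_i$ from \cref{pa:module-V}) and hence a $\widetilde{G}_i$-submodule of $S^{b_{\chi_i}^{V_i}} V_i^*$. The same Frobenius reciprocity bound applied at the level of $\widetilde{G}_i$ shows that the $\widetilde{G}_i$-structure on $W_i$ realises an extension of $\chi_i$ of $b$-invariant exactly $b_{\chi_i}^{V_i}$; by the defining property of $\widetilde{\chi}_i = j_{G_i}^{\widetilde{G}_i}(\chi_i)$ this extension is $\widetilde{\chi}_i$ itself. Now $\phi$ acts on $W$ by cyclically permuting the factors through intertwiners $\widetilde{\sigma}_i \colon W_i \to W_{i+1}$ induced from $\sigma_i$; by Schur's lemma, after choosing identifications $W_i \cong E_i$ one has $\widetilde{\sigma}_i = c_i \alpha_i$ for scalars $c_i$, and matching the composite $\widetilde{\tau}_1 = \widetilde{\sigma}_n\cdots\widetilde{\sigma}_1$ with the $\psi_1$-action on $\widetilde{E}_1$ forces $\prod_i c_i = 1$. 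Computing $\widetilde{\chi}_0(g\phi^k)$ as the trace of the resulting block-cyclic operator on $W$ reduces, after going around the $\phi^k$-cycle once, to a single trace on $W_1$ involving each $\widetilde{\sigma}_i$ exactly $k$ times, so the total scaling is $(\prod_i c_i)^k = 1$ and $\widetilde{\chi}_0(g\phi^k) = \widetilde{\chi}(g\phi^k)$ for all $g$ and $k$.

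The main obstacle is this final character-matching step on non-identity cosets, and the hypothesis that $n$ is prime enters precisely here: for $1 \leq k \leq n-1$ one has $\gcd(n,k) = 1$, so $\phi^k$ acts on $W$ as a single $n$-cycle and the trace is determined by one composition around that cycle, in which each $\widetilde{\sigma}_i$ appears with the same exponent, making the scalar ambiguity $\prod_i c_i$ collapse uniformly.
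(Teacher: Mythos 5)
Your proof is correct and takes a genuinely different route from the paper. The paper works directly with the Molien series: it applies Molien's formula to $P_{\widetilde{\chi}}^V(t)$, splits the sum over $\widetilde{G}$ into $G$-cosets via \cref{lem:tilde-conj-classes,lem:phi-conj-classes}, reduces each twisted coset $G.\phi^i$ with $n\nmid i$ to $G_1.\psi_1^i$ via the block-matrix determinant identity $\det(\ID_V - t\,\varphi_{(\overline{x};\phi)}) = \det(I_m - t^n\,\varphi_{(x;\psi_1)})$, and bounds the remaining cosets by $P_\lambda^V$ for the restriction $\lambda$ to $G\rtimes\langle\phi^n\rangle$; primality of $n$ is needed so that \emph{every} coset $G.\phi^i$ with $n\nmid i$ acts as a single $n$-cycle and hence contributes $P_{\widetilde{\chi}_1}^{V_1}(t^n)$. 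You instead bypass the Molien computation entirely by exhibiting the $\chi$-isotypic component $W = W_1\otimes\cdots\otimes W_n$ of $S^{b_\chi^V}V^*$ as an explicit irreducible $\widetilde{G}$-module and then pinning down which extension it affords via Schur's lemma, using the defining property of $j_{G_i}^{\widetilde{G}_i}$ on each factor to fix the $\psi_i$-action on $W_i$. Both arguments are valid; yours buys a more structural explanation (it literally identifies the module giving the $j$-induced character) at the cost of the somewhat delicate intertwiner-matching, while the paper's buys a formula-level argument that never leaves the character ring. Two small remarks: you write $\widetilde{\sigma}_i = c_i\alpha_i$ where you mean $c_i\delta_i$ (the $\alpha_i$ are endomorphisms of $E_i$, not maps $E_i\to E_{i+1}$; the paper has the same slip in the display defining the $\alpha$-extension in \cref{pa:b-preferred-extension}); and your closing remark somewhat mislocates the role of primality — once $\widetilde{\chi}_0$ and $\widetilde{\chi}$ agree on the single coset $G.\phi$, they agree everywhere, since two extensions of $\chi$ differ by a linear character of the cyclic group $\widetilde{G}/G$, determined by its value at $\phi$, and $\widetilde{\chi}$ cannot vanish identically on $G.\phi$. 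Thus your method does not actually require $n$ prime; that hypothesis is what makes the paper's coset-by-coset Molien computation go through.
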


\begin{proof}
It suffices to show that $b_{\widetilde{\chi}}^V = b_{\chi}^V$. For this we recall the following result of Molien (see \cite[Proposition 5.2.4]{geck-pfeiffer:2000:characters-of-finite-coxeter-groups}) which states that
\begin{equation*}
P_{\widetilde{\chi}}^V(t) = \frac{1}{|\widetilde{G}|}\sum_{g \in \widetilde{G}}\frac{\widetilde{\chi}(g^{-1})}{\det_V(\ID_V - t\cdot \varphi_g)}
\end{equation*}
where $\varphi_g : V \to V$ is the map defined by the action of $g$. The term in the sum is constant on $\widetilde{G}$-conjugacy classes. Therefore, using \cref{lem:tilde-conj-classes} and splitting up along cosets we have
\begin{equation}\label{eq:coset-decomp-molien}
P_{\widetilde{\chi}}^V(t) = \frac{1}{[\widetilde{G}:G]}\sum_{\phi^i \in \langle \phi\rangle}\sum_{x \in H^1(G,\phi^i)} \frac{\widetilde{\chi}((x;\phi^i)^{-1})}{|C_{G,\phi^i}(x)|\cdot\det_V(\ID_V - t\cdot \varphi_{(x;\phi^i)})},
\end{equation}
where here the sum is taken over a set of representatives from the equivalence classes $H^1(G,\phi^i)$.

When $i=1$ it follows from \cref{lem:phi-conj-classes} that we need only compute this term on elements of the form $\overline{x}$ where $x \in G_1$. After suitably ordering the basis $\mathcal{B}$ considered in \cref{pa:module-V} (and identifying linear maps with matrices) we have
\begin{equation*}
\varphi_{(\overline{x};1)} = \begin{bmatrix}
A & & & \\
& I_m & &\\
& & \ddots &\\
& & & I_m
\end{bmatrix}
\qquad\text{and}\qquad
\varphi_{(1;\phi)} = \begin{bmatrix}
& B & & \\
& & \ddots &\\
& & & B\\
B & & & 
\end{bmatrix}
\end{equation*}
where $I_m$ denotes the $m\times m$ identity matrix and $m$ is the common dimension of the $V_j$'s. Here $A$ and $B$ are $m\times m$ matrices; in fact $B$ is a permutation matrix. With this we have
\begin{equation*}
\det(\ID_V - t\cdot \varphi_{(\overline{x};\phi)})
=
\det
\begin{bmatrix}
I_m & -t\cdot AB & & & \\
& I_m & -t\cdot B & &\\
& & I_m & \ddots &\\
& & & \ddots & -t\cdot B\\
-t\cdot B & & & & I_m
\end{bmatrix}
=
\det(I_m - t^n\cdot AB^n).
\end{equation*}
Now $\det(I_m - t^n\cdot AB^n)$ is simply $\det_{V_1}(\ID_{V_1} - t^n\cdot \varphi_{(x;\psi_1)})$. Hence, using \cref{lem:phi-conj-classes}, we can deduce that
\begin{equation}\label{eq:coset-sums}
\sum_{x \in H^1(G_1,\psi_1)} \frac{\widetilde{\chi}((\overline{x};\phi)^{-1})}{|C_{G,\phi}(\overline{x})|\cdot\det_V(\ID_V - t\cdot \varphi_{(\overline{x};\phi)})} = \sum_{x \in H^1(G_1,\psi_1)}\frac{\widetilde{\chi}_1((x;\psi_1)^{-1})}{|C_{G_1,\psi_1}(x)| \cdot\det_{V_1}(\ID_{V_1} - t^n\cdot \varphi_{(x;\psi_1)})}
\end{equation}

Let us now consider the remaining cosets for the automorphism $\phi^i$. If $i = nk$ for some $k \in \mathbb{N}$ then we have $\phi^i = \psi_1^k \times\cdots\times\psi_n^k$. If this is not the case then, as $n$ is prime, we have $\phi^i$ is exactly of the form considered in \cref{pa:cyclic-permutation} and the induced automorphism on $G_m$, for any $1 \leqslant m \leqslant n$, is precisely $\psi_m^i$. In particular we may apply the previous argument to this case to obtain that the statement in \cref{eq:coset-sums} holds with $\phi$ replaced by $\phi^i$ and $\psi_1$ replaced by $\psi_1^i$. Putting this together, and using the fact that $\{\psi_1^n,\psi_1^{2n}\dots,\psi_1^{nk}\} = \{\ID,\psi_1,\dots,\psi_1^{k-1}\}$ we get that
\begin{equation*}
P_{\widetilde{\chi}}^V(t) = \frac{1}{n} \left[ (n-1)P_{\widetilde{\chi}_1}^{V_1}(t^n) + P_{\lambda}^{V}(t) \right]
\end{equation*}
where $\lambda$ is the restriction of $\widetilde{\chi}$ to the subgroup $G\rtimes\langle \phi^n\rangle \leqslant \widetilde{G}$. Observe that as $\lambda$ restricted to $G$ is $\chi$ we must have $b_{\lambda}^V \geqslant b_{\chi}^V$. From this it follows that $b_{\widetilde{\chi}}^V = b_{\lambda}^V = b_{\chi}^V$ because $b_{\widetilde{\chi}_1}^{V_1} = b_{\chi_1}^{V_1}$ and the $b$-invariants are compatible with direct products.
\end{proof}

\begin{rem}
The setup here may seem contrived but this situation occurs naturally when $G$ is a Weyl group and $V$ is the natural module. In this case the invariant basis considered in \cref{pa:module-V} is given by a set of simple roots for the underlying root system of $G$.
\end{rem}

\setstretch{0.96}
\renewcommand*{\bibfont}{\small}
\printbibliography
\end{document}